\documentclass[11pt]{amsart}
\usepackage{amsmath,amssymb,amsfonts,amsthm,amstext}

\usepackage{url,xspace}
\usepackage{color,graphicx}
\usepackage{graphics}
\usepackage{tikz}
\usepackage{enumerate}

\usepackage{verbatim} 
\usepackage[hidelinks]{hyperref} 

\usepackage{a4}
\usepackage{amssymb}
\oddsidemargin=10mm
\textwidth=142mm
\topmargin=-10mm
\textheight=220mm

\newcommand{\old}[1]{}

\newcommand{\R}{{\mathbb R}}
\newcommand{\N}{{\mathbb N}}

\renewcommand{\P}{{\mathbb P}}

\newcommand{\mudiag}{{\mu^{\nearrow}}}

\newcommand{\isd}{\stackrel{d}{=}}
\newcommand{\tod}{\stackrel{d}{\to}}

\newcommand{\cP}{\mathcal{P}}

\newcommand{\cPtwo}{\mathcal{P}^{(2)}}

\newcommand{\mutwo}{\mu^{(2)}}
\newcommand{\nutwo}{\nu^{(2)}}
\newcommand{\Ttwo}{T^{(2)}}

\newtheorem{theorem}{Theorem}
\newtheorem{lemma}[theorem]{Lemma}
\newtheorem{proposition}[theorem]{Proposition}
\newtheorem{corollary}[theorem]{Corollary}

\theoremstyle{definition}
\newtheorem{example}{Example}
\newtheorem{remark}[example]{Remark}

\numberwithin{example}{section}
\numberwithin{figure}{section}
\numberwithin{theorem}{section}
\numberwithin{equation}{section}

\newcounter{mycount}

\title{Minimax functions on Galton-Watson trees}
\author{James B.\  Martin and Roman Stasi{\'n}ski}
\date{20 June 2018}

\subjclass[2010]{60J80; 60G99; 91A46}

\keywords{Galton-Watson, branching process, recursive
distributional equation, minimax, game tree, endogeny}

\begin{document}

\begin{abstract}
We consider the behaviour of minimax recursions defined on random 
trees. Such recursions give the value of a 
general class of two-player combinatorial games. 
We examine in particular the case where the tree is given by a Galton-Watson branching process, truncated at some depth $2n$, and the terminal values of the level-$2n$ nodes are drawn independently from some 
common distribution. The case of a regular tree was previously 
considered by Pearl, who showed that as $n\to\infty$ the 
value of the game converges to a constant, and by Ali Khan, Devroye
and Neininger, who obtained a distributional limit under a suitable
rescaling. 

For a general offspring distribution, there is a 
surprisingly rich variety of behaviour: the (unrescaled) value of the game may converge to a constant, or to a discrete limit with several atoms, or to a continuous distribution. We also give distributional 
limits under suitable rescalings in various cases. 

We also address questions of \textit{endogeny}. Suppose the game is played on a tree with many levels, so that the terminal values are far from the root. To be confident of playing a good first move, do we need to see the whole tree and its terminal values, or can we play close to optimally by inspecting just the first few levels of the tree? The answers again depend in an interesting way on the offspring distribution.

We also mention several open questions.
\end{abstract}

\maketitle

\section{Introduction}
In this paper we consider the behaviour of minimax recursions
defined on random trees. 

Consider a finite rooted tree with depth $m$. We will call the root ``level 0", the children of the root ``level 1", and so on. Suppose every node at levels $0,1,\dots, m-1$ has at least one child; the nodes at level $m$ are all leaves. 
Suppose every leaf node (i.e.\ every node at level $m$) has
some real value associated to it. Then 
recursively propagate the values towards the root
in a minimax way: each node at an odd level gets 
a value which is the max of the values of its children,
and each node at an even level gets a value which is
the min of the values of its children.

This minimax procedure has a natural interpretation
in terms of a two player game. Two players alternate turns; 
a token starts at the root, and a move of the game consists
of moving the token from its current node to one of
the children of that node. The leaf nodes are terminal
positions; the outome of the game is the value associated
to the leaf node where the game ends. Player 1 is 
trying to minimise this outcome, and player 2 is trying
to maximise it. The outcome of the game with ``optimal play"
is the value associated to the root. 

Suppose the terminal values are random, drawn independently from some common distribution.

Pearl \cite{Pearl} considered the case where the tree is regular (every non-leaf node has $d$ children for some $d\geq 2$) and the terminal values are independent and uniformly distributed 
on the interval $[0,1]$. 
For simplicity assume that the depth of the tree is even; 
write $W_{2n}$ for a random variable representing
the value at the root of a tree of depth $2n$. 
Pearl showed that $W_{2n}$ converges in distribution 
to a constant as $n\to\infty$. This result 
was refined by Ali Khan, Devroye and Neininger
\cite{ADN}, who derived an asymptotic distribution
for $W_{2n}$ after appropriate rescaling. 

In this paper we consider the case where the tree is given by a Galton-Watson branching process, truncated at level $2n$. This generalisation leads to a surpsingly rich variety of behaviour,
depending on the offspring distribution of the branching process. 
For example, the limiting distribution of $W_{2n}$ 
may be concentrated at a single point (as in the regular case), or may now have several atoms, or may even be continuous. 

There is also a rich interplay between the two sources of randomness now present in the model (the tree itself, and the terminal values at the leaves). Suppose we play the game on a tree with many levels, so that the terminal values are far from the root. In order to be confident of playing a good first move, do we need to see the whole tree and terminal values, or can we play close to optimally by inspecting just the structure of the first few levels of the tree? Such questions can be formulated precisely in terms of the 
\textit{endogeny} property for certain recursive
tree processes, as introduced by \cite{AldBan}.
The answers again depend in an interesting way on the offspring distribution. 

Such questions concerning the relative importance 
of local tree structure and terminal values are of
considerable interest in understanding the 
effectiveness of certain tree-search algorithms such 
as \textit{Monte Carlo tree search} (MCTS) -- see
\cite{MCTSsurvey} for a survey. MCTS has famously 
been applied in recent years to games such as go,
where it provided a considerable increase in playing
strength \cite{MCTSgo} even before being allied with powerful deep learning techniques \cite{alphago}.
For some games, simple versions of these algorithms,
without local evaluation functions, and with
only very crude input from the terminal values 
(given for example by ``random rollouts" 
through unexplored parts of the tree),
are nonetheless able to converge quickly towards
good lines of play. Understanding which
aspects of a game's structure make
such convergence possible is an interesting challenge both
in theoretical and in practical terms.

Our main resuts concerning distributional limits are presented
in the next section. In Section \ref{sec:examples} we discuss a range of examples and mention some open problems. The results about endogeny are given in Section \ref{sec:endogeny}.
The main proofs are given in Section \ref{sec:distribution_proofs}
and Section \ref{sec:endogeny_proof}.

Before that we mention some recent related work. 
Broutin, Devroye and Fraiman \cite{BroutinDevroyeFraiman}
consider recursive distributional equations 
(including those of minimax type) defined on 
Galton-Watson trees conditioned to have a given 
total size $n$. Holroyd and Martin \cite{HolroydMartin}
consider minimax-type games (and various mis{\`e}re and
asymmetric variants) defined on (perhaps infinite)
Galton-Watson trees, with particular emphasis
on the nature of 
phase transitions for the outcomes of the game as the
underlying offspring distribution varies
(see Section \ref{sec:examples} for
further comments). Note that in both 
\cite{BroutinDevroyeFraiman} and \cite{HolroydMartin},
unlike in the case of this paper, the offspring
distribution puts positive weight at $0$, 
so that there are leaves close to the root. 

Similar questions arise in the context of 
random \textsc{AND/OR} trees and random Boolean functions. 
For example the model of Pemantle and Ward
\cite{Pemantle} involves a regular tree in which 
each node independently is a max or a min with 
equal probability; see Section \ref{subsec:comments_identity}
for comments on the relation to a particular case of our model. See for example Broutin and Mailler \cite{BroutinMailler} for a variety of recent results in a more general setting, and many relevant references. 

\subsection{Main results}
Consider a Galton-Watson tree with an offspring distribution with mass function
$p_1, p_2, p_3,\dots$ on $\{1,2,3,\dots\}$ (note that every individual has at least
one child). Let $G(x)=\sum_{k=0}^\infty p_k x^k$ be the probability generating
function of the offspring distribution (which is a strictly increasing function 
mapping $[0,1]$ to $[0,1]$ bijectively). We will also write
throughout
\begin{gather}
\label{Rdef}
R(x)=1-G(x)
\\
\intertext{and}
\label{fdef}
f(x)=R(R(x)). 
\end{gather}

Truncate the tree at level $2n$, so that all the
vertices at level $2n$ are leaves. Let the 
terminal values associated to the leaves be i.i.d.\
uniform on $[0,1]$ (independently of the structure of the
tree). Recursively, assign values to the internal nodes
of the tree (in particular, to the root) using the minimax
procedure defined above. See Figure \ref{fig:minimax-example} for an illustration. 

\begin{figure}
\begin{center}
\tikzset{
node_level/.style={draw=none, fill=none}
}
\begin{tikzpicture}
[scale=0.7, transform shape, 
level distance=10mm,
every node/.style={circle, draw, fill=red!10},
level 1/.style={sibling distance=66mm, nodes={fill=blue!10}},
level 2/.style={sibling distance=35mm, nodes={fill=red!10}},
level 3/.style={sibling distance=16mm, nodes={fill=blue!10}},
level 4/.style={sibling distance=7mm, nodes={rectangle, fill=white}}]
\node[label=right:{$W_4$}] (n){$\wedge$}
  child {node (n1) {$\vee$}
  	child {node[label=right:{$W_2^{(1,1)}$}] (n11) {$\wedge$}
  		child {node (n111) {$\vee$}
			child {node (n1111) {$U_1$}
				}	
  		}
      	child {node (n112) {$\vee$}
	      	child {node (n1121) {$U_2$}}
	      	child {node (n1122) {$U_3$}} 	      		      	
      	}
    }
    child {node[label=right:{$W_2^{(1,2)}$}] (n12) {$\wedge$}
  		child {node (n121) {$\vee$}
	      	child {node (n1211) {$U_4$}} 
	      	child {node (n1212) {$U_5$}} 	      	  		
  		}
      	child {node (n122) {$\vee$}
	      	child {node (n1221) {$U_6$}} 
	      	child {node (n1222) {$U_7$}} 
	      	child {node (n1223) {$U_8$}} 	      		      	      	
      	}
    }  
  }
  child {node (n2) {$\vee$}
  	child {node[label=right:{$W_2^{(2,1)}$}] (n21) {$\wedge$}
  		child {node (n211) {$\vee$}
	      	child {node (n2111) {$U_9$}}   		
  		}
      	child {node (n212) {$\vee$}
	      	child {node (n2121) {$U_{10}$}}       	
      	}
    }
    child {node[label=right:{$W_2^{(2,2)}$}] (n22) {$\wedge$}
  		child {node (n221) {$\vee$}
	      	child {node (n2211) {$U_{11}$}} 
	      	child {node (n2212) {$U_{12}$}} 	      	  		
  		}
      	child {node (n222) {$\vee$}
	      	child {node (n2221) {$U_{13}$}}       	
      	}
      	child {node (n223) {$\vee$}
	      	child {node (n2231) {$U_{14}$}} 
	      	child {node (n2232) {$U_{15}$}} 
	      	child {node (n2233) {$U_{16}$}} 	      		      	      	
      	}
    }  
  }
  child {node (n3) {$\vee$}
    child {node[label=right:{$W_2^{(3,1)}$}] (n31) {$\wedge$}
        child {node (n311) {$\vee$}
	      	child {node (n3111) {$U_{17}$}}      
        }
        child {node (n312) {$\vee$}
	      	child {node (n3121) {$U_{18}$}}
	      	child {node (n3122) {$U_{19}$}}
	      	child {node (n3123) {$U_{20}$}
				child [grow=right] {node [node_level] (level0) {{ }level 4} 
				edge from parent[draw=none]
            	child [grow=up] {node [node_level] (level1) {{ }level 3} 
            	edge from parent[draw=none]
				child [grow=up] {node [node_level] (level2) {{ }level 2} 
				edge from parent[draw=none]
				child [grow=up] {node [node_level] (level3) {{ }level 1} 
				edge from parent[draw=none]
				child [grow=up] {node [node_level] (level4) {{ }level 0} 
				edge from parent[draw=none]
			}}}}}}	      		      	      
      }
    }
  };
\end{tikzpicture}
\caption{
\label{fig:minimax-example}
An example of a minimax tree, with $4$ levels. 
Here all non-leaf nodes have 1, 2 or 3 children.}
\end{center}
\end{figure}
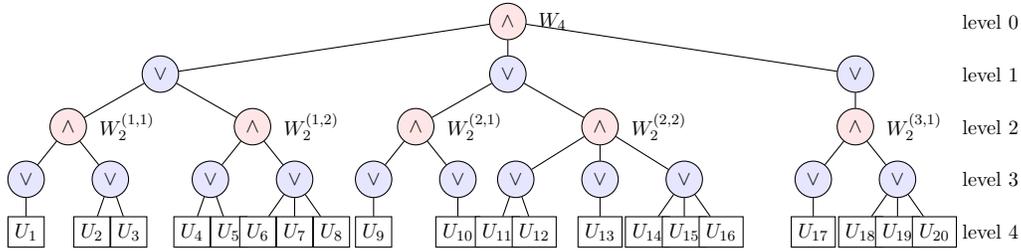

(Note that there is a nothing particularly special about 
uniform boundary conditions. By a simple rescaling we can 
map between this case and the case of i.i.d.\ boundary values
from any other continuous distribution. Later we will also consider discrete boundary values, for example those taking values only 0 and 1, where we can interpret 0 as a win for the first player, and 1 as a win for the second player).

We denote by $W_{2n}$ the random variable associated with the root of a tree of depth $2n$. The we have a distributional recursion:
\begin{equation}\label{eq:rde}
W_{2n}\isd 
\min_{1\leq i\leq M}
\max_{1\leq j\leq M_i}
W^{(i,j)}_{2n-2},
\end{equation}
where $M$ and $M_1,M_2,M_3,\dots$ are i.i.d.\ draws from the offspring distribution,
and $W^{(i,j)}_{2n-2}$, $i,j\in\N$, are i.i.d.\ copies of the random variable
$W_{2n-2}$, independent of $M$ and $\{M_i\}$. 

Then a simple generating function computation (see 
the beginning of 
Section \ref{sec:distribution_proofs}) gives
\begin{equation}
\P\left(W_{2n}\leq x\right)=f\big(\P(W_{2n-2}\leq x)\big),
\label{quantilerecursion}
\end{equation}
where $f$ is defined at (\ref{fdef}).
So to look at the behaviour of the $W_{2n}$ for large $n$,
we will be interested in the function $f$ and in particular
its fixed points.

We begin with the results  
for the case of a regular tree. 

\begin{theorem}
\label{thm:regular}
Suppose $p_d=1$ for some $d\geq 2$.
\begin{itemize}
\item[(a)]
(Pearl \cite{Pearl})
\[
W_{2n}\tod w \text{ as } n\to\infty,
\]
where $q$ is the unique fixed point in $(0,1)$ 
of the function $f_{d\operatorname{-reg}}$ defined by 
\begin{equation}\label{freg}
f_{d\operatorname{-reg}}(x)=1-\left(1-x^d\right)^d.
\end{equation}
\item[(b)]
(Ali Khan, Devroye and Neininger\cite{ADN})
Let $\xi=f_{d\operatorname{-reg}}'(q)$. Then
\[
\xi^{n}\left(W_{2n}-q\right)\tod W \text{ as } n\to\infty,
\]
where $W$ has a continuous distribution function $F_W$ 
which satisfies
$F_W(x)=f_{d\operatorname{-reg}}(F_W(x/\xi))$. 
\end{itemize}
\end{theorem}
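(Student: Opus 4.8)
The plan is to treat parts (a) and (b) as two applications of a single analytic fact about the iterated map $f=f_{d\operatorname{-reg}}$, combined with the quantile recursion \eqref{quantilerecursion}. For part (a), first I would establish that $f_{d\operatorname{-reg}}\colon[0,1]\to[0,1]$ is continuous, strictly increasing, fixes $0$ and $1$, and is strictly convex then strictly concave (its second derivative changes sign exactly once on $(0,1)$), so that the equation $f_{d\operatorname{-reg}}(x)=x$ has exactly one root $q$ in the open interval $(0,1)$ and $f_{d\operatorname{-reg}}'(q)<1$; this last inequality is the key quantitative input. Given this, the deterministic sequence $a_{2n}:=\P(W_{2n}\le x)$ satisfies $a_{2n}=f_{d\operatorname{-reg}}(a_{2n-2})$ with $a_0=\min(x,1)^{+}$ (the uniform CDF at $x$), and a standard monotone-iteration argument shows $a_{2n}\to q$ for every $x\in(0,1)$ while $a_{2n}\to 0$ for $x\le 0$ and $a_{2n}\to 1$ for $x\ge 1$. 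Hence $\P(W_{2n}\le x)\to\ind\{x\ge q\}$ at every continuity point, i.e.\ $W_{2n}\tod w$ where $w$ is the constant $q$. (I would reconcile notation: the statement writes ``$w$'' for the limit and ``$q$'' for the fixed point, so the limiting constant \emph{is} $q$.)

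For part (b), write $\xi=f_{d\operatorname{-reg}}'(q)\in(0,1)$ and set $b_{2n}(y):=\P\big(\xi^{-n}(W_{2n}-q)\le y\big)=\P\big(W_{2n}\le q+\xi^n y\big)=a_{2n}$ evaluated at $x=q+\xi^n y$. The recursion \eqref{quantilerecursion} gives $b_{2n}(y)=f_{d\operatorname{-reg}}\big(b_{2n-2}(\xi^{-1}y)\big)$, because $q+\xi^{n}y=q+\xi^{\,n-1}(\xi^{-1}y)\cdot\xi$ — wait, more carefully, $\xi^{n}y=\xi^{\,n-1}(\xi y)$, so in fact $b_{2n}(y)=f_{d\operatorname{-reg}}(b_{2n-2}(\xi y))$, matching the claimed functional equation $F_W(x)=f_{d\operatorname{-reg}}(F_W(x/\xi))$ after the substitution making $\xi y\leftrightarrow y/\xi$ consistent. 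The approach is then: (i) show the family $\{b_{2n}\}$ is tight, using that $f_{d\operatorname{-reg}}$ is a contraction near $q$ with rate $\xi$ so the fluctuations of $W_{2n}-q$ are of order $\xi^{n}$ — concretely, linearise $f_{d\operatorname{-reg}}$ about $q$ and control the error terms to get two-sided bounds $c_1\xi^{n}\le$ (a suitable spread of $W_{2n}-q$) $\le c_2\xi^{n}$; (ii) extract a subsequential distributional limit $F_W$, necessarily a CDF solving $F_W(y)=f_{d\operatorname{-reg}}(F_W(y/\xi))$; (iii) prove this functional equation has a unique solution in the class of distribution functions with the correct normalisation (mean zero, or more robustly: among CDFs, using a coupling/contraction argument on the space of such $F$ under a Wasserstein-type metric), which pins down the full limit and gives continuity of $F_W$.

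I would organise the write-up so that the common engine — analysis of the shape of $f_{d\operatorname{-reg}}$ (monotonicity, the single inflection, the unique interior fixed point, and $0<\xi<1$) — is isolated as a preliminary lemma, after which (a) is a few lines and (b) is the substantive part. For (b) I would prefer a contraction-mapping proof of uniqueness of the fixed-point distribution: on the set of probability measures on $\R$ with, say, finite second moment, define $\Phi(\mu)=$ law of $f_{d\operatorname{-reg}}$ applied coordinatewise in the appropriate sense and rescaled by $\xi$, and show $\Phi$ is a strict contraction in the $2$-Wasserstein metric because each ``layer'' contributes a factor $\xi<1$; its unique fixed point is $\mathcal{L}(W)$, and convergence of $\xi^{n}(W_{2n}-q)$ follows by iterating $\Phi$ from the (rescaled) law of $W_0$.

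The main obstacle I anticipate is step (b)(i)–(iii): proving tightness and uniqueness for the rescaled limit. Tightness requires genuinely controlling the \emph{nonlinear} error in the iteration $a_{2n}=f_{d\operatorname{-reg}}(a_{2n-2})$ uniformly as the argument is pushed to scale $q+O(\xi^n)$ — one must show the quadratic correction from the inflection does not accumulate to destroy the $\xi^{n}$ scaling, which is a careful but standard estimate on the convergence rate of the deterministic recursion. Uniqueness of the solution of $F_W(x)=f_{d\operatorname{-reg}}(F_W(x/\xi))$ is subtle because the functional equation alone admits the trivial solutions $F\equiv 0$ and $F\equiv 1$ and possibly translated copies, so one must correctly identify the side condition (e.g.\ a moment normalisation inherited from the finite-$n$ laws) that selects the right solution; I expect this to be the crux, and the $2$-Wasserstein contraction argument is the cleanest route around it. Everything else — the generating-function derivation of \eqref{quantilerecursion}, the monotone-iteration proof of (a), and the reconciliation of the ``$w$'' versus ``$q$'' notation — is routine and would occupy only a modest fraction of the proof. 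Since both parts are attributed to prior work (\cite{Pearl} and \cite{ADN}), I would cite those sources for the details and present the argument above as a streamlined recapitulation suitable for setting up the general offspring-distribution results that follow.
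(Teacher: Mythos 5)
There is a genuine error at the very foundation of your argument: you assert that the unique interior fixed point $q$ of $f_{d\operatorname{-reg}}$ satisfies $f_{d\operatorname{-reg}}'(q)<1$ and call this ``the key quantitative input.'' The opposite is true. Since $f_{d\operatorname{-reg}}'(x)=d^2x^{d-1}(1-x^d)^{d-1}$ vanishes at $0$ and $1$, the map lies below the diagonal near $0$ and above it near $1$, so it crosses the diagonal at $q$ from below to above and $\xi=f_{d\operatorname{-reg}}'(q)>1$ (for $d=2$, $q=(\sqrt5-1)/2$ and $\xi=4q^2=6-2\sqrt5\approx1.53$). Thus $q$ is the \emph{repelling} fixed point and $0,1$ are the attracting ones. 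This inverts the mechanism of part (a): the iterates satisfy $f^n(x)\to0$ for $x<q$ and $f^n(x)\to1$ for $x>q$, so the distribution functions converge to the step function at $q$ and $W_{2n}\to q$ in probability. Your intermediate claim that ``$a_{2n}\to q$ for every $x\in(0,1)$'' is false, and if it were true it would yield $\P(W_{2n}\le x)\to q$ for all $x\in(0,1)$, i.e.\ a two-point limit law on $\{0,1\}$ rather than the constant $q$ --- note this contradicts your own stated conclusion $\P(W_{2n}\le x)\to\ind\{x\ge q\}$.

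The error propagates throughout part (b). The scaling $\xi^n(W_{2n}-q)$ with $\xi>1$ means one zooms \emph{in} around the repelling point $q$ at rate $\xi^{-n}$ to resolve the transition of the CDF from near $0$ to near $1$; your $\xi^{-n}(W_{2n}-q)$ normalisation, the hand-waved reconciliation of $\xi y$ versus $y/\xi$, and the proposed Wasserstein contraction ``because each layer contributes a factor $\xi<1$'' all rest on the wrong sign of $\log\xi$ and do not survive the correction. The paper itself cites Pearl and \cite{ADN} for Theorem \ref{thm:regular}, but its own generalisation (Lemma \ref{lm:khan_extension}) shows the intended route: it explicitly assumes $\xi>1$, sets $g_n(x)=f^n(q+x/\xi^n)$, proves $h_1\le g_n\le h_2$ locally by induction with Taylor expansion (Lemma \ref{lm:bound_on_g}), shows $(g_n(x))$ is eventually monotone (Lemma \ref{lm:monotonicity_g_n}) so a pointwise limit $g$ exists with $g=f\circ g(\cdot/\xi)$, and obtains continuity from uniform bounds on $g_n'$ via the chain rule --- not from a fixed-point uniqueness argument in a transport metric. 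You would need to rebuild both parts on the correct premise $\xi>1$.
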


Now we will consider general offspring distributions. 
Since $G$ is increasing and bijective as a function from $[0,1]$
to $[0,1]$, we have that $R=1-G$ is decreasing and bijective.
and $f=R\circ R$ is again increasing and bijective.
Also $G$ is analytic on $[0,1)$, so that $f$ is analytic
on $(0,1)$. 

We'll be particularly interested in fixed points of the function $f$. The function $R$ itself has a single fixed point, which is obviously also a fixed point of $f$. 
Otherwise the fixed points of $f$ come in pairs:
if $q$ is one then so is $R(q)$. One such pair 
are the points 0 and 1.
We will say that a fixed point $q$ of $f$ 
is \textit{unstable from the right} if $q<1$ and
$\displaystyle\lim_{\epsilon\to0}\lim_{n\to\infty}f^n(q+\epsilon)
>q$; similarly \textit{unstable from the left}
if $q>0$ and $\displaystyle\lim_{\epsilon\to0}\lim_{n\to\infty}f^n(q-\epsilon)<q$.

For a regular tree, Theorem \ref{thm:regular}
tells us that the distribution of $W_{2n}$ converges to a constant.
For general distributions, we still have convergence in distribution,
but now we may have a ``genuinely random outcome" in the limit
as the tree becomes large; the limiting distribution may 
have more than one atom (and in some surprising cases, the 
distribution of $W_{2n}$ can simply be the same uniform distribution for all $n$). 

\begin{theorem}
\label{thm:unscaled}
$W_{2n}\tod W$ as $n\to\infty$, for some random variable $W$.
There are two cases. 
\begin{itemize}
\item[(a)] If $f$ is the identity function,
then $W_{2n}\sim U[0,1]$ for all $n$. 
\item[(b)] Otherwise, 
let $Q$ be the set of fixed points of $f$ which are 
unstable from at least one side. 

Then $W$ is discrete and has atoms precisely at the elements 
of $Q$.

For $q\in Q$, define
\begin{align}
\nonumber
q_- {} & = \begin{cases} 
\sup\{x: x<q, x=f(x)\},
& \textrm{if $q>0$ and $q$ is unstable from the left}\\ q & \textrm{otherwise}
	\end{cases} \\
\label{qminusdef}
q_+ {} & = \begin{cases}
\inf\{x: x>q, x=f(x)\},
& \textrm{if $q<1$ and $q$ is unstable from the right}\\ q & \textrm{otherwise}
	\end{cases}.
\end{align}
Then $\P(W=q)=q_+-q_-$.
\end{itemize}
\end{theorem}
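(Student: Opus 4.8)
The plan is to read off the distribution of $W_{2n}$ directly from the quantile recursion (\ref{quantilerecursion}). Since $W_0$ is uniform on $[0,1]$ we have $\P(W_0\le x)=x$ for $x\in[0,1]$, so iterating (\ref{quantilerecursion}) gives $\P(W_{2n}\le x)=f^n(x)$, the $n$‑fold iterate of $f$. Part (a) is then immediate: if $f=\mathrm{id}$ then $f^n(x)=x$ and $W_{2n}\sim U[0,1]$ for every $n$. For part (b) assume $f\ne\mathrm{id}$. Recall from the discussion preceding the theorem that $f$ is a continuous strictly increasing bijection of $[0,1]$, analytic on $(0,1)$, with $f(0)=0$ and $f(1)=1$; hence $x\mapsto f(x)-x$ is analytic and not identically zero on $(0,1)$, so its zero set, which is $\Phi\cap(0,1)$ for $\Phi:=\{x\in[0,1]:f(x)=x\}$, is discrete in $(0,1)$. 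Therefore $\Phi$ is a countable closed subset of $[0,1]$, in particular a Lebesgue null set, and between any two consecutive points of $\Phi$ the sign of $f(x)-x$ is constant by the intermediate value theorem.

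First I would analyse the pointwise limit $\ell(x):=\lim_{n\to\infty}f^n(x)$. Since $f$ is increasing, $(f^n(x))_n$ is monotone (non‑decreasing if $f(x)\ge x$, non‑increasing if $f(x)\le x$) and bounded, so $\ell(x)$ exists, is a fixed point of $f$ by continuity, and $\ell$ is non‑decreasing in $x$. Concretely, if $x$ lies in a maximal complementary interval $(a,b)$ of $\Phi$ then $\ell(x)=b$ when $f>\mathrm{id}$ on $(a,b)$ and $\ell(x)=a$ when $f<\mathrm{id}$ on $(a,b)$, while $\ell(q)=q$ for $q\in\Phi$. Let $\bar\ell$ be the right‑continuous modification of $\ell$ (with $\bar\ell\equiv 0$ below $0$). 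One checks that the jump set of $\bar\ell$ is contained in $\Phi$ (elsewhere $\ell$ is locally constant), that $\ell$ and $\bar\ell$ agree at every continuity point of $\bar\ell$, and that $\bar\ell$ is a genuine distribution function with $\bar\ell(1)=1$. Hence $f^n(x)=\P(W_{2n}\le x)\to\bar\ell(x)$ at every continuity point of $\bar\ell$, giving $W_{2n}\tod W$ with $\P(W\le x)=\bar\ell(x)$.

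It remains to identify $W$. For discreteness I would argue that $\bar\ell$ takes values in $\Phi$ (a right‑limit of the $\Phi$‑valued function $\ell$, using that $\Phi$ is closed), and that a monotone function which is continuous on an interval maps that interval onto an interval; since $\Phi$ is a null set this forces $\bar\ell$ to be constant on each of its maximal intervals of continuity, so $\bar\ell$ is a pure jump function and $W$ is discrete. For the atoms, for each $q\in\Phi$ I would compute the one‑sided limits of $\ell$ at $q$ from the description above, obtaining $\ell(q^-)=q_-$ and $\ell(q^+)=q_+$ with $q_\pm$ as in (\ref{qminusdef}); here one matches ``unstable from the left/right'' to the sign of $f-\mathrm{id}$ on the adjacent complementary interval, and treats separately the cases $q\in\{0,1\}$ and $q$ an accumulation point of $\Phi$, where the definition of $q_\pm$ collapses to $q$ and the corresponding one‑sided limit of $\ell$ is also $q$. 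Since $\bar\ell(q)=\ell(q^+)$ and $\bar\ell(q^-)=\ell(q^-)$, the jump of $\bar\ell$ at $q$ is $\P(W=q)=q_+-q_-$, which is strictly positive exactly when $q$ is unstable from at least one side, i.e. when $q\in Q$; at points outside $\Phi$ there is no jump. Finally $\sum_{q\in Q}(q_+-q_-)=1$, since $W$ is a discrete probability distribution supported on $Q$ (this can also be seen directly: the intervals $[q_-,q_+)$ for $q\in Q$ have disjoint interiors, cover $[0,1)$ up to a countable set, and $\bar\ell$ increases by exactly $q_+-q_-$ across each of them).

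I expect the main nuisance to be the bookkeeping around fixed points of $f$ near the endpoints $0,1$ or accumulating at one another: one must check that the one‑sided limits of $\ell$ genuinely coincide with the delicately‑defined $q_\pm$ in every degenerate configuration, and that the passage to the right‑continuous modification $\bar\ell$ is handled consistently (so that the stated atom at $q$ comes out as $q_+-q_-$ and not, say, $q-q_-$ or $q_+-q$). By contrast the conceptual core — monotone iteration converges to a fixed point, analyticity makes $\Phi$ a null set, and a monotone function into a null set is a step function — is short and robust.
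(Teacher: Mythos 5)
Your proof is correct and follows essentially the same route as the paper's: iterate the quantile recursion to get $\P(W_{2n}\le x)=f^n(x)$, use monotonicity to obtain a pointwise limit that is a fixed point of $f$, use analyticity of $f-\mathrm{id}$ to show the fixed points are isolated in $(0,1)$ so the limit is constant on the complementary intervals, and read off the atom $q_+-q_-$ from the one-sided limits of $\lim_n f^n$ at each fixed point. Your extra care with the right-continuous modification $\bar\ell$ is a minor refinement that the paper elides (it writes $F_W(x)=\lim_n f^n(x)$, which can fail only at the countably many atoms and so does not affect convergence in distribution).
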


It's not hard to show that $x\in Q$ if and only if $R(x)\in Q$. Hence again the atoms of the distributional limit $W$ come in pairs, with the possible exception of the fixed point of $R$. In Section 
\ref{subsec:endpoints}, we comment in particular on the case where $W$ has atoms at $0$ and $1$.

For $q\in(0,1)$, we may write  (\ref{qminusdef}) alternatively as
$q_-=\displaystyle\lim_{\epsilon\to0}\lim_{n\to\infty}f^n(q-\epsilon)$
and $q_+=\displaystyle\lim_{\epsilon\to0}\lim_{n\to\infty}f^n(q+\epsilon)$ (this follows straightforwardly from the monotonicity 
and continuity of $f$). 

In the next results we consider fluctuations around the atoms 
of the limiting distributions obtained in Theorem \ref{thm:unscaled}(b). The appropriate rescaling around a point $q\in Q$ depends 
on the derivative of $\xi=f'(q)$. If $q\in Q$ 
then we must have $\xi\geq 1$.


\begin{theorem}
\label{thm:rescaled}
Consider the model defined by (\ref{eq:rde}). Assume that $f$ is not the identity function and let $Q$ be the set of fixed points of $f$ unstable from at least one side. 

Let $q\in Q$. Define $q_-$ and $q_+$ as at (\ref{qminusdef}), and
let $\xi=f'(q)$. Then:
\begin{enumerate}[(a)]
\item If $1 < \xi < \infty$, then
\begin{align*}
\mathcal{L} \left(\xi^n(W_{2n} - q)  \ | \ W_{2n} \in [q_-, q_+] \right) \xrightarrow[]{} \mathcal{L}(V) \textrm{ as } n \rightarrow \infty,
\end{align*}
where $V$ is a random variable with a continuous distribution function.
\label{thm:main_greater_t_1}
\item Suppose $\xi = 1$, and $k\geq 2$ is such that 
$f^{(r)}(q)=0$ for $1<r<k$ and $f^{(k)}(q)\ne 0$.
Then
\begin{align*}
\mathcal{L} \left(n^{\frac{1}{k-1}}(W_{2n}-q)  \ | \ W_{2n} \in [q_-, q_+] \right) \xrightarrow{} \mathcal{L}(V),
\end{align*}
where for $a = \left( \frac{k(k-2)!}{f^{(k)}(q)} \right)^{\frac{1}{k-1}}$ we have $V=\begin{cases}a & \textrm{w.p. }\frac{q_+-q}{q_+-q_-}\\
-a & \textrm{w.p. }\frac{q-q_-}{q_+-q_-}\end{cases}$.
\label{thm:main_eq_1}
\item If $\xi = \infty$, then $q \in \{0,1\}$. Assume now that
\begin{align}
\mathbb{E}(M \mathbb{I}_{M \leq n}) = \sum_{k=1}^n k p_k \sim c n^\rho \text{ as } n \to \infty
\label{assumption:mean}
\end{align}
for some $c>0$ and $\rho \in (0,1)$, where $M$ is distributed
according to the offspring distribution of the Galton-Watson tree, and let $K= \min\{i : p_i \neq 0\}$. Then $K<1/(1-\rho)$, and 
$|f(t)-q| \sim C|t-q|^{K(1-\rho)}$ as $t \to q$ for some $C > 0$. Moreover, 
\begin{align*}
\mathcal{L} (-[K(1- \rho)]^n \log |W_{2n}-q|  \ | \ W_{2n} \in [q_-, q_+] ) \xrightarrow[]{} \mathcal{L}(Y),
\end{align*}
where $Y$ is a random variable such that $\P(Y \in (0, \infty)) = 1$. 
\label{thm:main_eq_inf}
\end{enumerate}
\label{prof:prop_main}
\end{theorem}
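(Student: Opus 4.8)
The plan is to exploit that, once one passes to distribution functions, the problem becomes deterministic: the root value of a depth-$0$ tree is uniform on $[0,1]$, so iterating \eqref{quantilerecursion} gives $\P(W_{2n}\le x)=f^{n}(x)$ for all $x\in[0,1]$. Since $q_-$ and $q_+$ are fixed points of $f$ and each $W_{2n}$ has a continuous law, $\P(W_{2n}\in[q_-,q_+])=f^{n}(q_+)-f^{n}(q_-)=q_+-q_-$ for \emph{every} $n$, and $(q_-,q_+)$ contains no fixed point of $f$ other than $q$. So in each case the task reduces to computing, for the relevant scaling $s_n\to\infty$, the pointwise limit of $f^{n}(q+t\,s_n^{-1})$ and dividing by $q_+-q_-$; the conditioning is exactly what discards the probability escaping towards $q_\pm$ under the rescaling. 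In case (a), $q$ is a hyperbolic repelling fixed point at which $f$ is analytic (on $(0,1)$ by hypothesis, and at an endpoint when $\xi<\infty$), and I would apply Koenigs' linearisation theorem to the local inverse $g=f^{-1}$, which fixes $q$ with $g'(q)=\xi^{-1}\in(0,1)$: there is an analytic $\psi$ near $q$ with $\psi(q)=0$, $\psi'(q)=1$ and $\psi\circ f=\xi\,\psi$, hence $f^{n}(\cdot)=\psi^{-1}(\xi^{n}\psi(\cdot))$. Pulling back repeatedly through $g$, which contracts $(q_-,q_+)$ into a neighbourhood of $q$, extends $\psi$ to an increasing homeomorphism $(q_-,q_+)\to\R$; then $\psi(q+t\xi^{-n})=t\xi^{-n}+O(\xi^{-2n})$ gives $f^{n}(q+t\xi^{-n})\to\psi^{-1}(t)$, with $\psi^{-1}(t)\to q_\pm$ as $t\to\pm\infty$, so that $V$ has the continuous distribution function $t\mapsto(\psi^{-1}(t)-q_-)/(q_+-q_-)$.

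In case (b), $q$ is a parabolic fixed point with $f(x)=x+c(x-q)^{k}+O((x-q)^{k+1})$, where $c=f^{(k)}(q)/k!\neq0$, and membership in $Q$ fixes the sign of $c$ on the side(s) along which $q$ is unstable. The standard Fatou-coordinate estimate — along a trajectory escaping on an unstable side the quantity $|f^{m}(x)-q|^{1-k}$ decreases by $(k-1)|c|+o(1)$ at each step — shows that a trajectory started at distance $r$ from $q$ leaves a fixed small neighbourhood of $q$ after $(1+o(1))/((k-1)|c|\,r^{k-1})$ steps and is thereafter monotonically driven to the adjacent fixed point. Taking $r=s\,n^{-1/(k-1)}$, this exit time is $(1+o(1))\,n/((k-1)|c|\,s^{k-1})$, which is below $n$ exactly when $s^{k-1}>1/((k-1)|c|)$, i.e. when $s>|a|$ with $a$ the constant in the statement; sandwiching $f$ between the pure power maps $x\mapsto x+(|c|\mp\varepsilon)(x-q)^{k}$, whose iterates are explicit, makes the estimate rigorous. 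Hence $f^{n}(q\pm s\,n^{-1/(k-1)})\to q$ below the critical value and converges to the adjacent fixed point above it, which after dividing by $q_+-q_-$ is precisely the two-atom law with weights $(q_+-q)/(q_+-q_-)$ and $(q-q_-)/(q_+-q_-)$.

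In case (c), $f$ has finite derivative on $(0,1)$, so $\xi=\infty$ forces $q\in\{0,1\}$; I treat $q=0$, the case $q=1$ following by the symmetry $W_{2n}\mapsto1-W_{2n}$. A Tauberian argument converts \eqref{assumption:mean} into $R(1-u)=\E[1-(1-u)^{M}]\sim c'u^{1-\rho}$ as $u\downarrow0$ for some $c'>0$; together with $1-R(t)=G(t)\sim p_{K}t^{K}$ as $t\downarrow0$ this gives $f(t)=R(R(t))\sim C\,t^{K(1-\rho)}$. Instability of $0$ forces $f(t)>t$ near $0$, hence $K(1-\rho)<1$, i.e. $K<1/(1-\rho)$. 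Writing $\beta=K(1-\rho)$ and passing to $\ell=-\log t$, the map becomes $\ell\mapsto\beta\ell-\log C+o(1)$ for large $\ell$; starting from $\ell_0=r\beta^{-n}$, the $n$-th iterate $\ell_n$ stays bounded — equivalently $f^{n}(e^{-r\beta^{-n}})$ stays in a compact subset of $(0,q_+)$ — when $r$ is large, whereas for small $r$ the trajectory has already escaped and $f^{n}\to q_+$; one shows $f^{n}(e^{-r\beta^{-n}})\to g(r)$ for a nonincreasing $g$ with $g(0^{+})=q_+$ and $g(\infty)=0$. Then the conditional law of $-\beta^{n}\log W_{2n}$ converges to the law with survival function $g(r)/q_+$, which puts all its mass on $(0,\infty)$.

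I expect the main obstacle to be the uniformity of these asymptotics: in every case one evaluates $f^{n}$ not at a fixed point but at an argument $q+t\,s_n^{-1}$ that moves with $n$ (at scale $\xi^{-n}$, $n^{-1/(k-1)}$, or the far more delicate $e^{-r\beta^{-n}}$), so the remainder terms in the local normal form — Koenigs' conjugacy in (a), the Fatou expansion in (b), the regularly-varying behaviour of $R$ near $1$ in (c) — must be quantified and shown negligible uniformly over the relevant range of $t$, and, in (c), as the auxiliary scale $\ell_0$ diverges. Case (c) carries the additional analytic burden of extracting the exact boundary power law of $f$ from the one-sided hypothesis on the truncated mean, and of checking that any atoms of the limit $W$ located at the endpoints $q_\pm$ do not survive the rescaling.
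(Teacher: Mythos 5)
Your reduction to the deterministic iteration $f^n(q+ts_n^{-1})$, and the observation that conditioning on $[q_-,q_+]$ just divides by the constant $q_+-q_-$, is exactly the paper's starting point. For part (a) you then take a genuinely different route: the paper follows Ali Khan--Devroye--Neininger (Lemmas \ref{lm:khan_extension}--\ref{lm:monotonicity_g_n}), sandwiching $g_n(x)=f^n(q+x\xi^{-n})$ between $q+x$ and $q+x+cx^k$ by induction, proving monotonicity in $n$ and then continuity of the limit by a uniform bound on $\sup_n g_n'$; you instead invoke Koenigs' linearisation of the local inverse and extend the conjugacy $\psi$ to all of $(q_-,q_+)$ by pulling back. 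Your argument is correct and buys an explicit description of $F_V$ as $(\psi^{-1}(\cdot)-q_-)/(q_+-q_-)$ rather than only the functional equation $F_V(x)=f(F_V(x/\xi))$; its cost is that it leans on analyticity (or at least $C^{1+\varepsilon}$ regularity) of $f$ at $q$, which is automatic for $q\in(0,1)$ but not obvious at $q\in\{0,1\}$ --- though the paper's own proof has the same caveat. For part (b) your Fatou-coordinate escape-time computation is a correct alternative to the paper's comparison of $\tilde f(xn^{-1/(k-1)})$ with $x(n-1)^{-1/(k-1)}$ followed by the subsequence trick $g_{nl}$ vs.\ $g_n$; your threshold $s^{k-1}>1/((k-1)|c|)$ with $c=f^{(k)}(q)/k!$ reproduces the constant $a$ exactly, and the sandwich by pure power maps makes it rigorous.

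Part (c) follows the paper's strategy (Tauberian theorem for $G'$, composition to get $f(t)\sim Ct^{K(1-\rho)}$, passage to $\ell=-\log t$), and your derivation of the power law via $R(1-u)\sim c'u^{1-\rho}$ and $G(t)\sim p_Kt^K$ is cleaner than the paper's Lemma \ref{lm:tractable_H}. But there is a genuine gap at the step ``one shows $f^n(e^{-r\beta^{-n}})\to g(r)$''. The linearisation $\ell\mapsto\beta\ell-\log C+o(1)$ only yields the two-sided affine bound $\tilde D+\beta\ell\le -\log f(e^{-\ell})\le\tilde E+\beta\ell$ with $\tilde D\ne\tilde E$, whose $n$-fold iterates from $\ell_0=r\beta^{-n}$ converge to the \emph{different} limits $r+\tilde D/(1-\beta)$ and $r+\tilde E/(1-\beta)$; this gives tightness and $\P(Y\in(0,\infty))=1$ but not convergence. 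The paper obtains existence of the limit by showing $n\mapsto f^n(\exp(x/\beta^n))$ is eventually monotone, which is equivalent to $f(z)\ge z^\beta$ (or $\le$) near the endpoint and hence requires the leading constant to satisfy $C\ne1$. Since the constants at the two endpoints are linked by $C_1=C_0^Kp_K^{1-K(1-\rho)}$, only one of them is guaranteed to differ from $1$, and the paper needs a separate transfer argument (Lemma \ref{prop:limit_iff}, built on the half-step identity $W_{2n-1}\isd G^{-1}(1-W_{2n-2})$ of Lemma \ref{lm:swap}) to carry the limit from the good endpoint to the possibly degenerate one. Your appeal to ``the symmetry $W_{2n}\mapsto 1-W_{2n}$'' only converts the problem at $q=1$ into a $q=0$-type problem for $\tilde f(t)=1-f(1-t)$ with the \emph{other} constant $C_1$; it does not resolve the case $C_1=1$ (or $C_0=1$), where your argument for the existence of the limit breaks down. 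You would need either the monotonicity-plus-transfer mechanism of the paper or some substitute for it.
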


The scaling limits in part (a) are the closest ones to 
the result for the regular tree from Theorem \ref{thm:regular}. 
Note that when $q$ is an endpoint of the interval,
the limiting distribution $V$ is now one-sided, supported
on $(0,\infty)$ when $q=0$ and on $(-\infty,0)$ when $q=1$.

For part (b), recall that $f$ is analytic on $(0,1)$ so certainly
if $q\in(0,1)$, such a $k$ exists. Conceivably, there might be no such $k$ in some cases where $q=0$ or $q=1$ (although we know of 
no example where analyticity fails at $0$ or $1$ except when the derivative is infinite). 

On the other hand, many cases with $\xi=\infty$ are not covered by part (c). It seems challenging to describe all possible asymptotics; 
however, the assumption (\ref{assumption:mean}) is satisfied for an important class of power-law distributions with infinite mean, satisfying $\P(X > x) \sim x^{1-\alpha}$ with $\alpha \in (1,2)$. 

\section{Examples, discussion and open questions}
\label{sec:examples}
Our final main results, concerning the endogeny property, will be stated in Section \ref{sec:endogeny}. Before 
that, we discuss a variety of examples illustrating
the results of Theorems \ref{thm:unscaled} and \ref{thm:rescaled}.

First consider a case where each node has $1$ or $3$ children. 
This simple family already displays an interesting range of behaviours. 
Let $p_1=p$ and $p_3=1-p$, for $p\in[0,1]$.
In Figure \ref{figure:ternary}, we plot the function 
$f(x)-x$ for $x\in[0,1]$, for a variety of values of $p$. 
Fixed points of $f$ correspond to zeros
of the curve. A crossing from negative to positive corresponds
to an unstable fixed point. 

\begin{figure}[h]
\begin{center}
\includegraphics[width=0.32\textwidth]{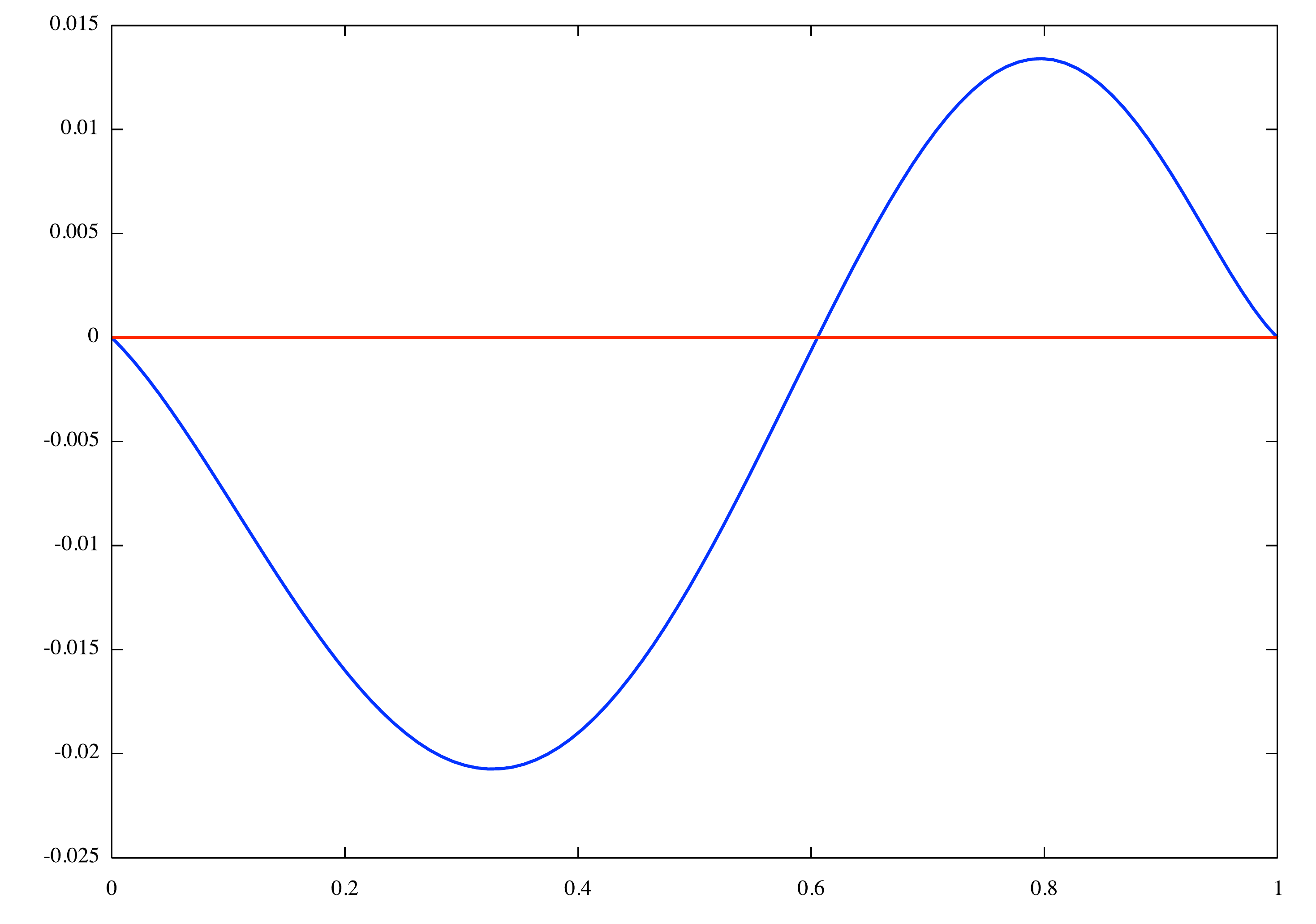}
\includegraphics[width=0.32\textwidth]{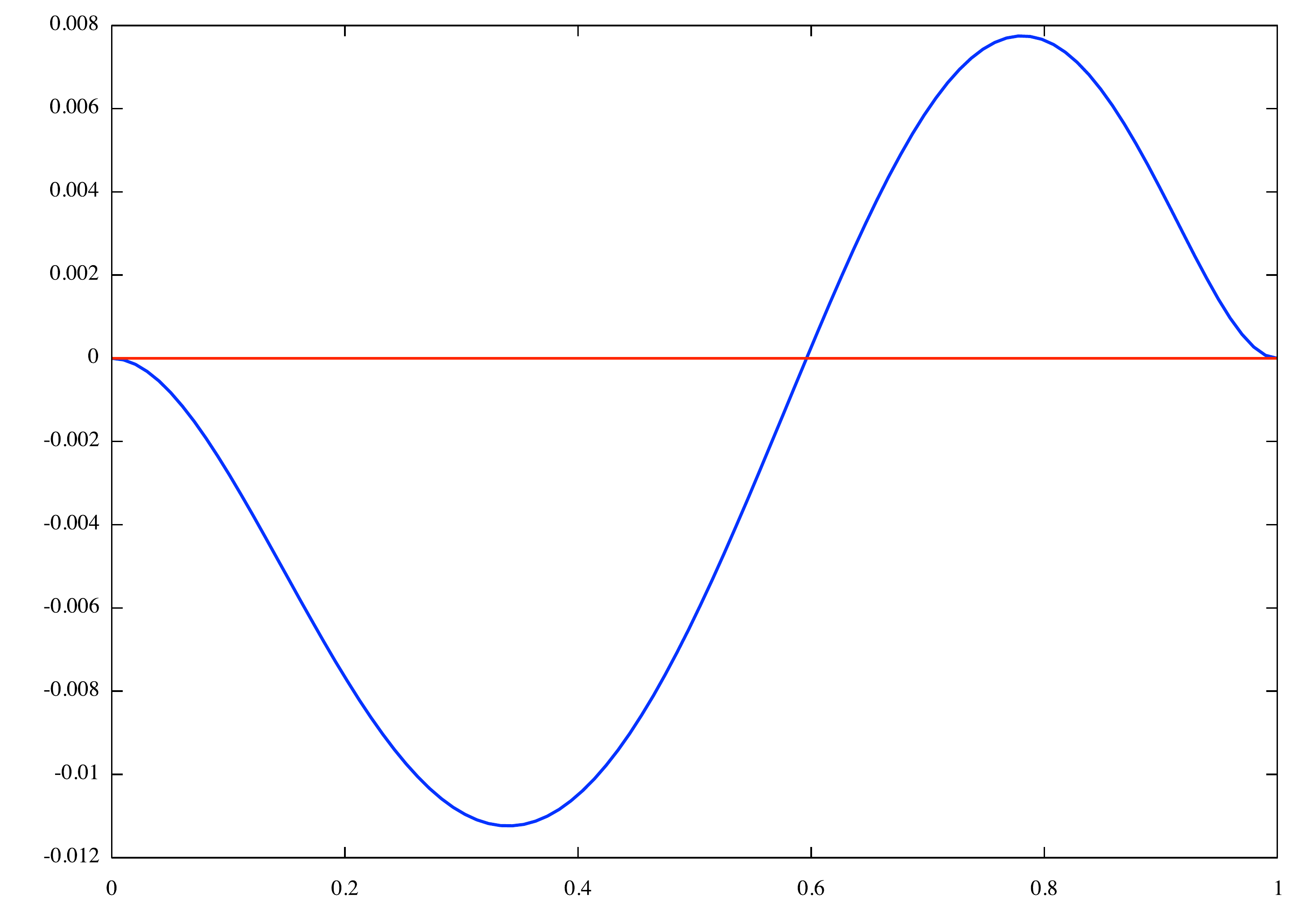}
\includegraphics[width=0.32\textwidth]{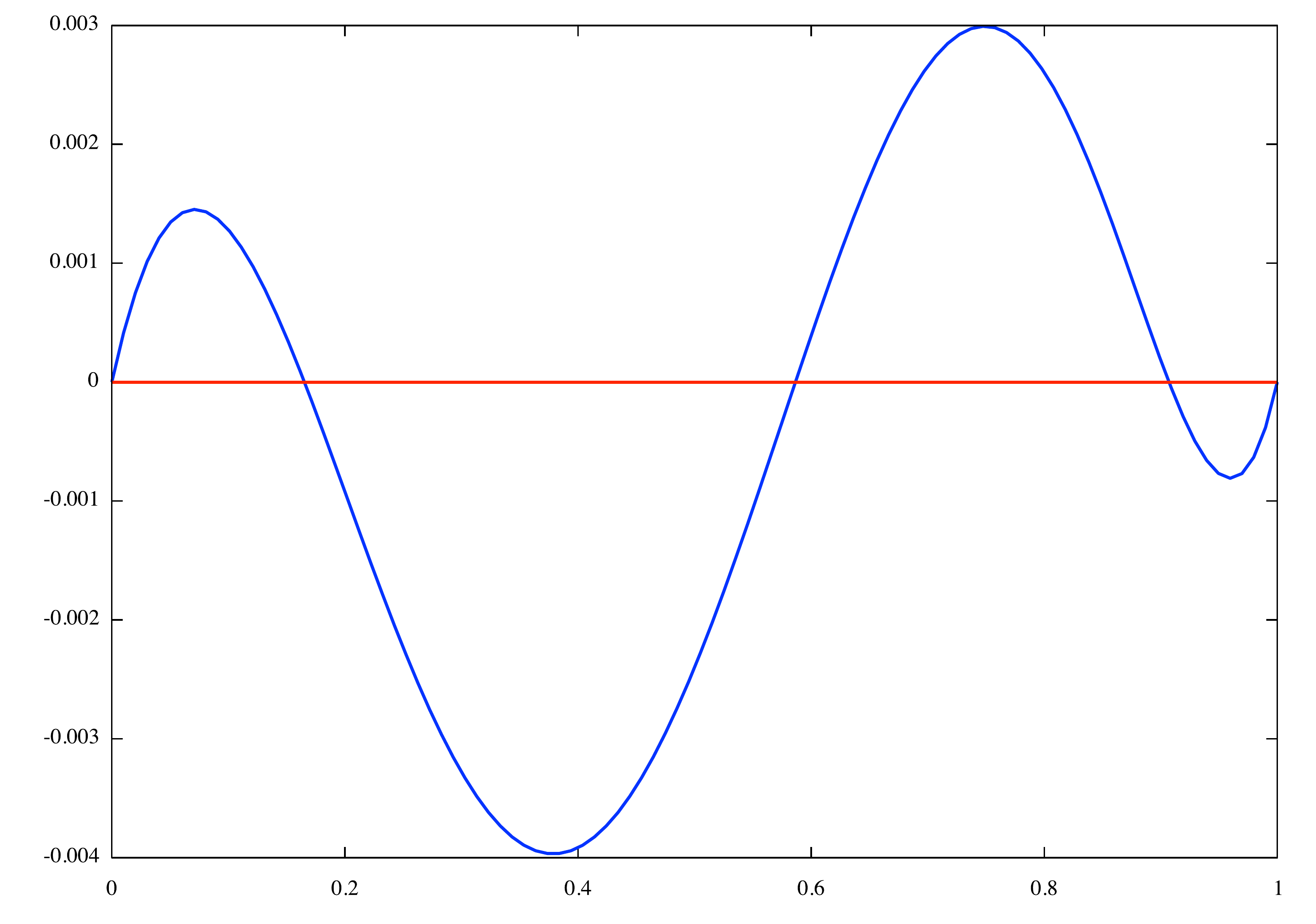}
\includegraphics[width=0.32\textwidth]{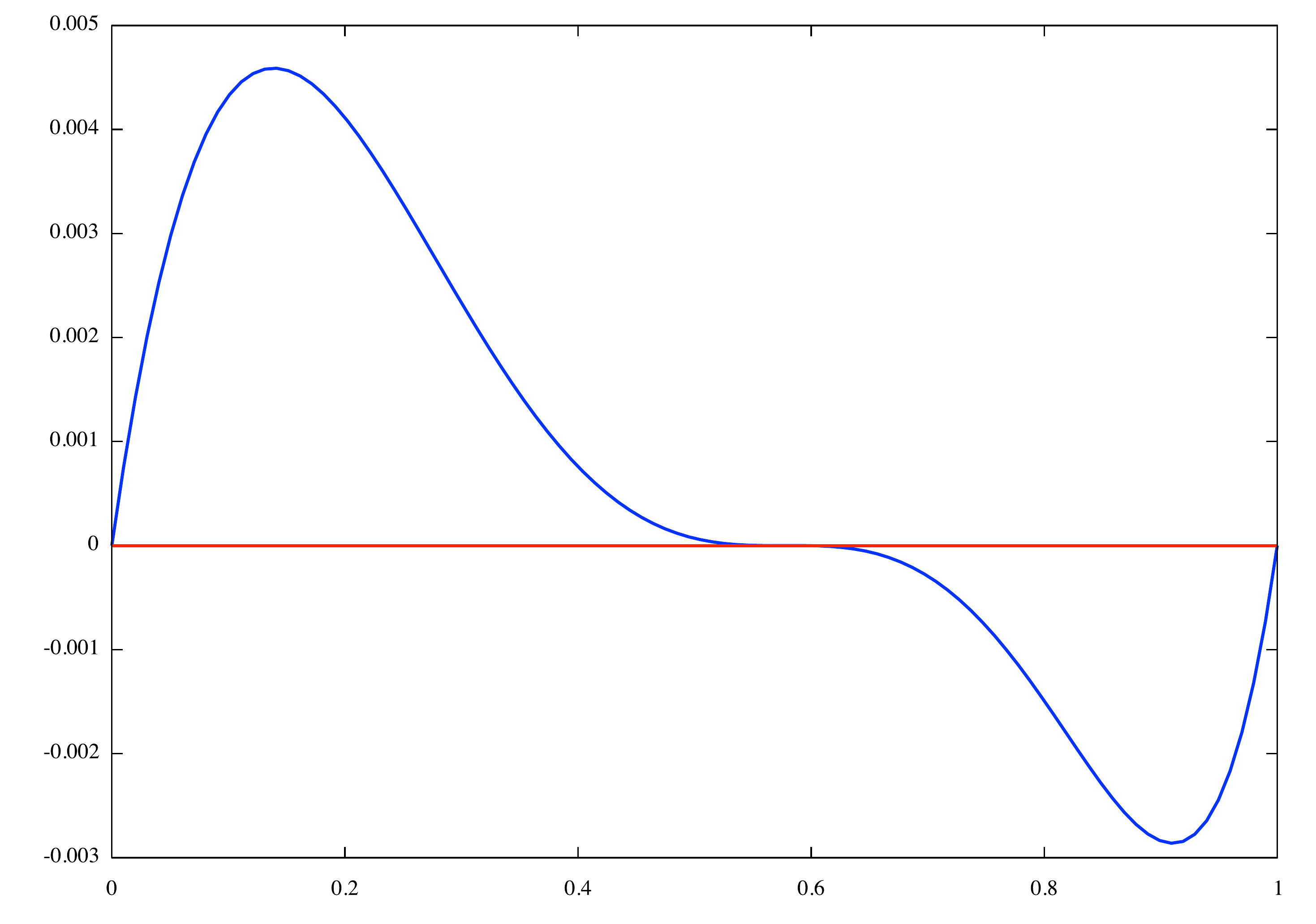}
\includegraphics[width=0.32\textwidth]{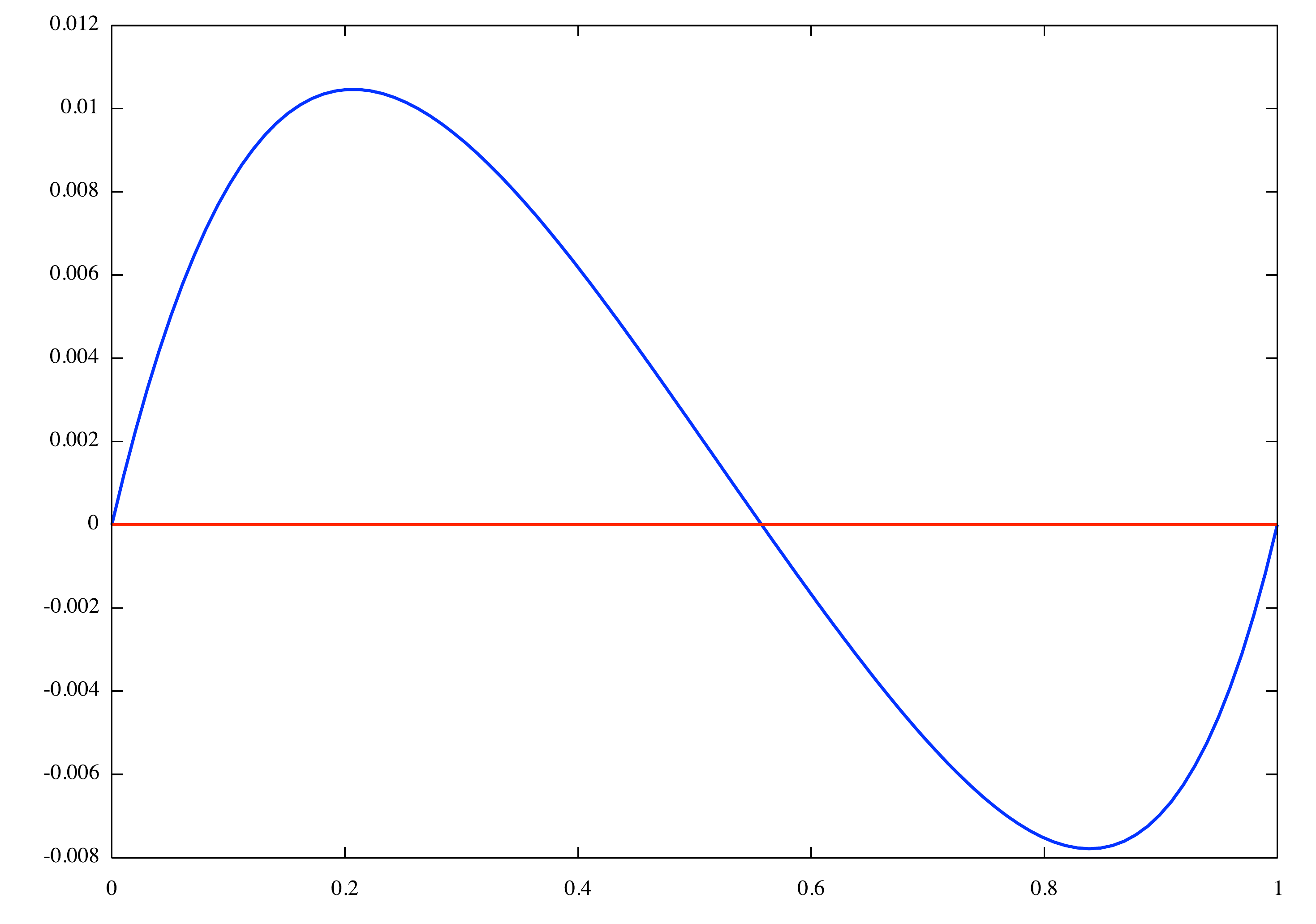}
\end{center}
\caption{
\label{figure:ternary}
The function $f(x)-x$ for $x\in[0,1]$ for the family of distributions with $p_1=p$ and $p_3=1-p$, with (a) $p=0.45$, (b) $p=0.5$, (c) $p=0.55$, (d) $p=0.598$, and (e) $p=0.7$.}
\end{figure}
When $p<0.5$, the points 0 and 1 are stable and there
is a unique unstable fixed point in $(0,1)$, just as in the case of a regular tree; $W_{2n}$ converges to a constant. At $p=0.5$, we have $f'(0)=f'(1)=1$; the slope of $f'(x)-x$ at 0 and 1 is 0, but the points are still stable. For $p>0.5$, the points 0 and 1 are unstable, and the limiting distribution $W$ in Theorem \ref{thm:unscaled} puts positive mass at 0 and 1. At first, there is also positive mass at another fixed point in $(0,1)$. However above a critical point at roughly $p=0.598$, two of the fixed points disappear, leaving only a stable fixed point in $(0,1)$, and the limiting distribution is concentrated only on the points 0 and 1.

Some further illustrative examples are shown in Figure 
\ref{figure:critical}. 
\begin{figure}[h]
\begin{center}
\includegraphics[width=0.32\textwidth]{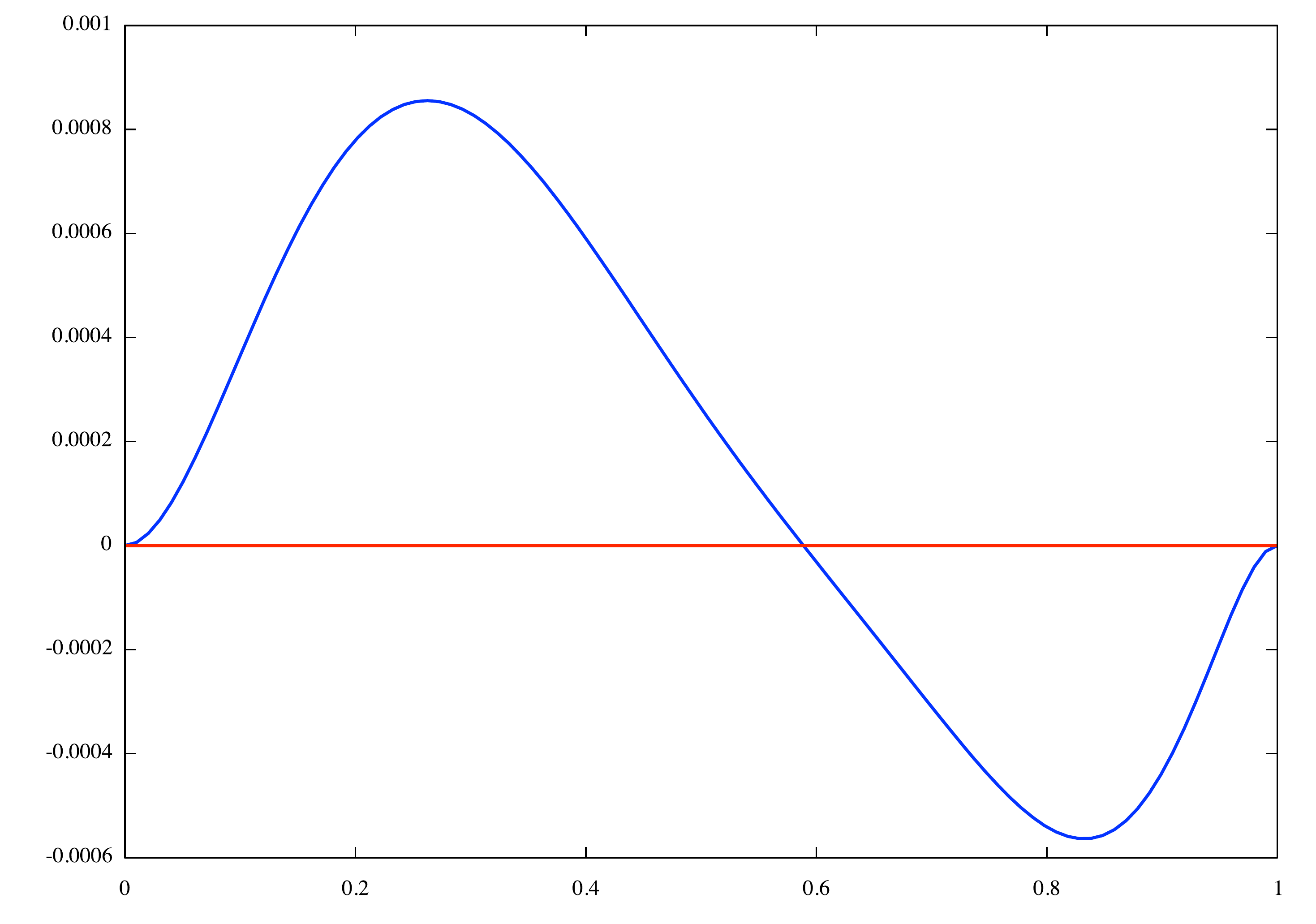}
\includegraphics[width=0.32\textwidth]{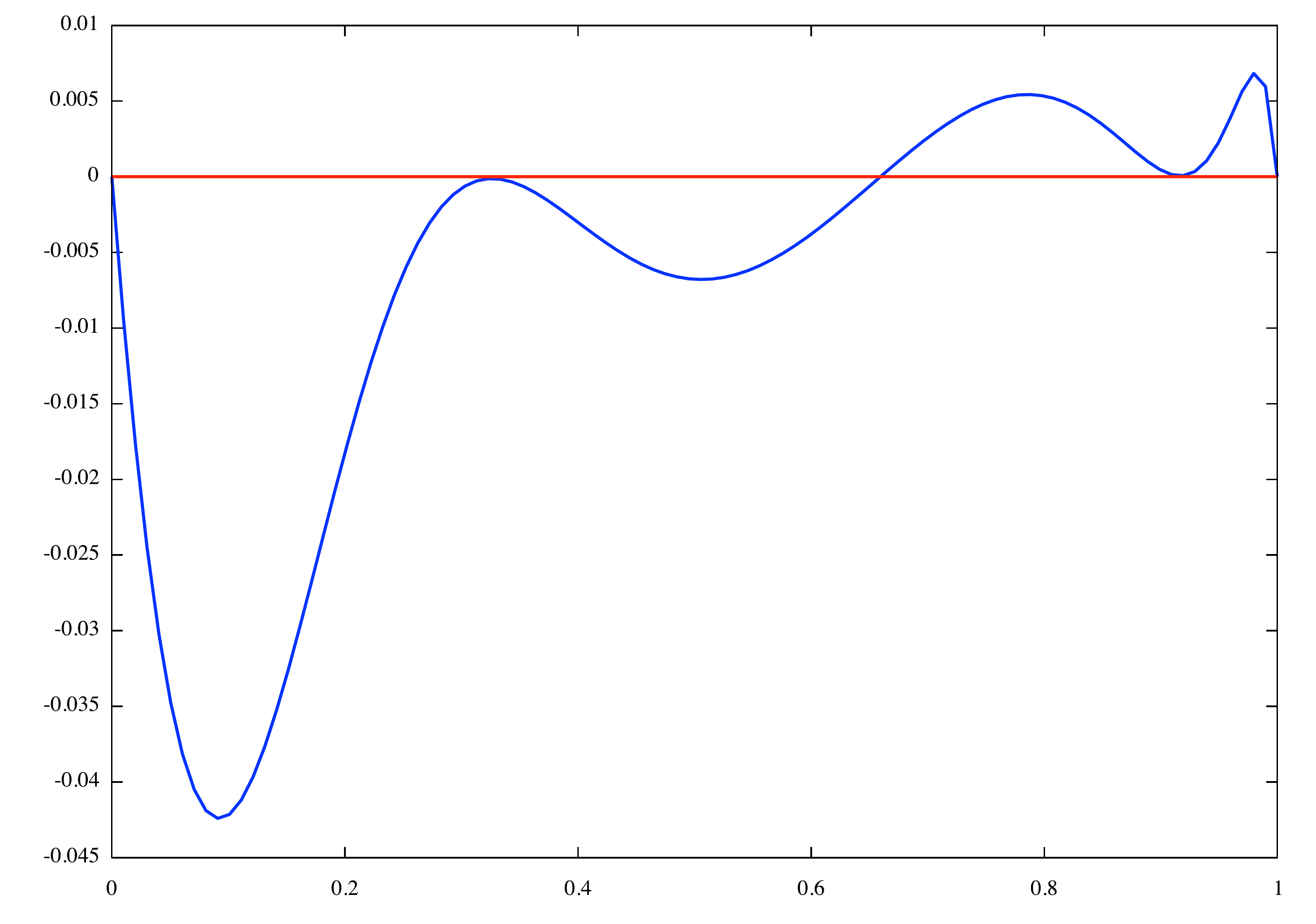}
\includegraphics[width=0.32\textwidth]{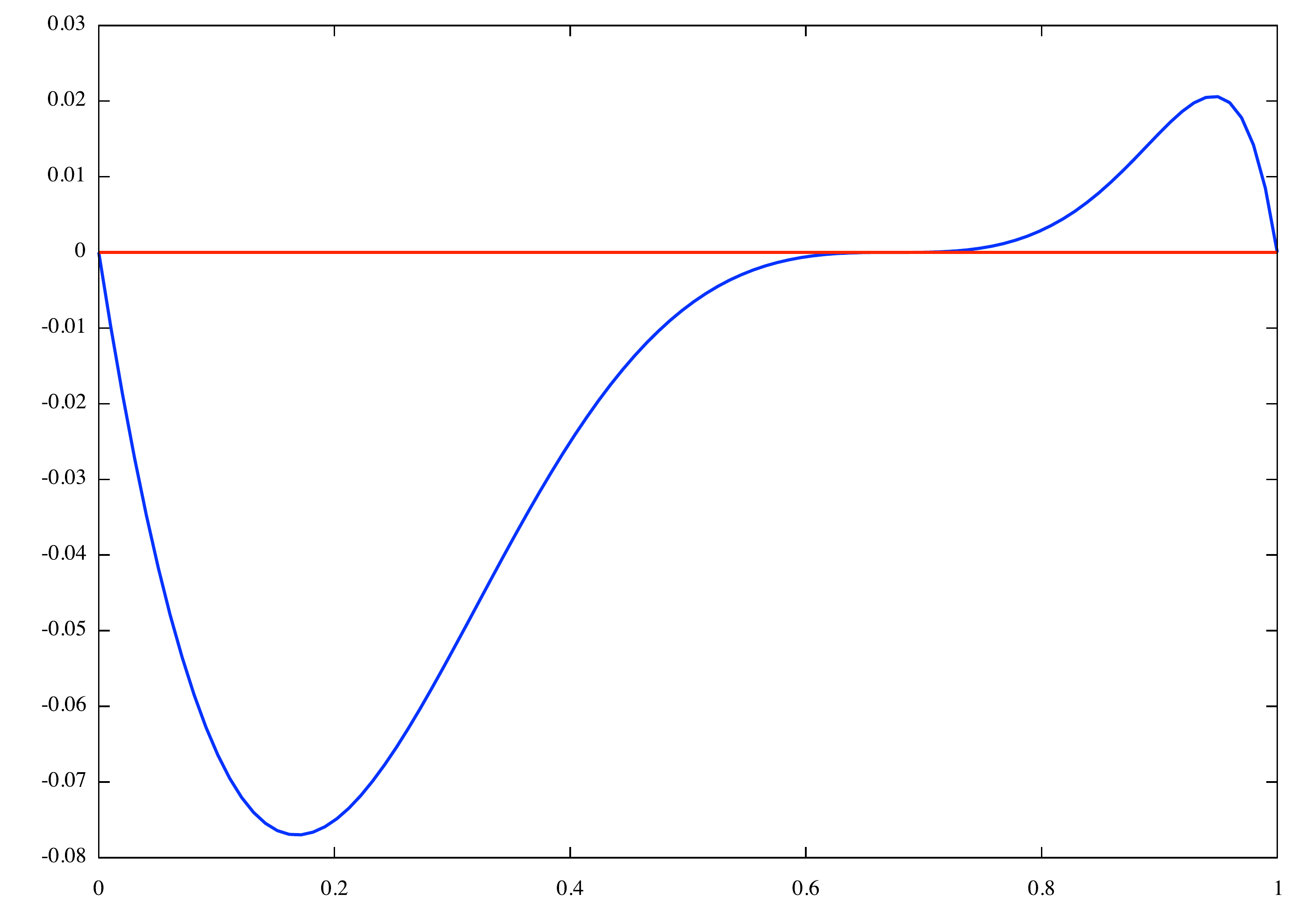}
\end{center}
\caption{
\label{figure:critical}
The function $f(x)-x$ for $x\in[0,1]$ in three
further cases:
(a) $p_1=0.5$, $p_2=0.25$, $p_4=0.25$;
(b) $p_2=0.783$, $p_{30}=0.217$, and
(c) $p_2=0.705$ and $p_{12}=0.295$. 
}
\end{figure}
For these distributions (the second and third are only approximate), 
we see points $q\in Q$ with $f'(q)=1$, and so the rescalings 
of Theorem \ref{thm:rescaled}(b), which are polynomial
rather than exponential, apply. 
In Figure \ref{figure:critical}(a) the relevant fixed points
are at $0$ and $1$, and in Figure 
\ref{figure:critical}(a), 
they appear as ``touchpoints" in the graph of $f(x)-x$ and so are unstable from one side only;
in these cases only one of the points $a$ and $-a$ in Theorem 
\ref{thm:rescaled}(b)
receives positive mass.
By contrast,
in Figure \ref{figure:critical}(c), the point of inflection gives a fixed point which is unstable on both sides. 

In passing, we mention briefly another interesting
aspect of some of the above examples, concerning
phase transitions. As we vary the offspring distribution,
we see points where, for example, the number of atoms
of the limiting distribution $W$ changes. Often, 
the transition can be \textit{continuous}: as the offspring distribution is varied,
an existing atom may split into several new atoms
(as may happen when a point of inflection occurs as 
in Figure \ref{figure:ternary}(d) or Figure \ref{figure:critical}(c)), or new atoms may appear whose
weight grows continously from $0$ (such as happens at the points $0$ and $1$ in Figure \ref{figure:ternary}(b)).
On the other hand, we can also see \textit{discontinuous}
transitions in cases such as Figure \ref{figure:critical}(b);
one can perturb the offspring distribution in an arbitrarily small way to remove the ``touchpoints" seen there, so that 
the atoms of $W$ inside $(0,1)$ disappear and all there mass jumps to the endpoints $0$ and $1$. Such ideas, expressed
only vaguely here, are studied in a closely related context in \cite{HolroydMartin}.

%
%
%
%

\subsection{Atoms at endpoints}\label{subsec:endpoints}
The limiting distribution $W$ in Theorem \ref{thm:unscaled} may have atoms at 0 and 1. We note the following simple criterion:

\begin{proposition}
\label{proposition:0atom}
Let $\mu$ be the mean of the offspring distribution.
\begin{itemize}
\item[(i)]If $p_1\mu<1$ then $\P(W=0)=\P(W=1)=0$. 
\item[(ii)]If $p_1\mu>1$ (including the case
$p_1>0$ and $\mu=\infty$) then $\P(W=0)>0$ and $\P(W=1)>0$. 
\end{itemize}
\end{proposition}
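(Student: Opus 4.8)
The plan is to use Theorem~\ref{thm:unscaled} to reduce the statement to a question about local stability of the fixed points $0$ and $1$ of $f$, and then to settle that question via the identity $f'(0)=f'(1)=p_1\mu$. First note that $0$ and $1$ are always fixed points of $f$: since $p_0=0$ we have $G(0)=0$, so $R(0)=1$ and $f(0)=R(R(0))=R(1)=1-G(1)=0$, and likewise $f(1)=R(R(1))=R(0)=1$. Since $0$ is the left endpoint of $[0,1]$ it can only be unstable from the right, and $1$ only from the left. Hence, provided $f$ is not the identity, Theorem~\ref{thm:unscaled}(b) gives $\P(W=0)=q_+-q_-$ with $q=0$ (notation as in (\ref{qminusdef}), so $q_-=0$, and $q_+=0$ unless $0$ is unstable from the right) and $\P(W=1)=q_+-q_-$ with $q=1$ ($q_+=1$, and $q_-=1$ unless $1$ is unstable from the left); in particular $\P(W=0)>0$ iff $0$ is unstable from the right, and $\P(W=1)>0$ iff $1$ is unstable from the left. (If $f$ is the identity then $W\sim U[0,1]$, so $\P(W=0)=\P(W=1)=0$; this is compatible with (i), and is ruled out under the hypothesis of (ii) by the computation below.)

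The key computation is $f'(0)=f'(1)=p_1\mu$: with $R=1-G$ one has $f'(x)=R'(R(x))R'(x)=G'(R(x))G'(x)$, which at $x=0$ equals $G'(1)G'(0)=\mu p_1$ and at $x=1$ equals $G'(0)G'(1)=p_1\mu$. Stability of the fixed point $0$ is governed by the sign of $f(x)-x$ for small $x>0$: since $f$ is continuous and strictly increasing with $f(0)=0$, if $f(x)<x$ on some $(0,\delta)$ then $f^n(x)\downarrow 0$ (there is no fixed point in $(0,\delta)$), so $0$ is stable from the right, whereas if $f(x)>x$ on some $(0,\delta)$ then $f^n(x)$ increases to a fixed point in $[\delta,1]$, so $0$ is unstable from the right; symmetrically at $1$, using the sign of $(1-f(1-\eta))-\eta$ for small $\eta>0$. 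When $\mu<\infty$, $f$ is differentiable at the endpoints with derivative $p_1\mu$ there, so Taylor expansion gives $f(x)-x=(p_1\mu-1)x+o(x)$ as $x\downarrow 0$ and $(1-f(1-\eta))-\eta=(p_1\mu-1)\eta+o(\eta)$ as $\eta\downarrow 0$. Thus $p_1\mu<1$ makes both endpoints stable, proving (i) when $\mu<\infty$ (the case $p_1=0$ is included, since then $f'(0)=f'(1)=0$); and $p_1\mu>1$ makes both unstable, with $f\neq\mathrm{id}$ since $f'(0)\neq1$, proving (ii) when $\mu<\infty$.

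It remains to treat (ii) when $\mu=\infty$ (the hypothesis then forces $p_1>0$), where $f$ is no longer differentiable at the endpoints. Here I would use two elementary facts: $R(z)=1-G(z)\le 1-p_1z$ for all $z\in[0,1]$ (drop the nonnegative higher-order terms of $G$), and, writing $h(\eta)=R(1-\eta)=\mathbb{E}[1-(1-\eta)^M]$, the ratio $h(\eta)/\eta$ increases to $\mu=\infty$ as $\eta\downarrow 0$ (for each $M\ge 1$ the ratio $(1-(1-\eta)^M)/\eta$ is non-increasing in $\eta$, so monotone convergence applies). Near $0$: since $R$ is decreasing, $R(x)\le 1-p_1x$ yields $f(x)=R(R(x))\ge R(1-p_1x)=h(p_1x)$, and $h(p_1x)/x=p_1\cdot h(p_1x)/(p_1x)\to\infty$, so $f(x)>x$ for $x$ small and $0$ is unstable from the right (in particular $f\neq\mathrm{id}$, so Theorem~\ref{thm:unscaled}(b) applies). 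Near $1$: fix $A$ with $p_1A>1$; for $\eta$ small, $R(1-\eta)=h(\eta)\ge A\eta$, hence $f(1-\eta)=R(R(1-\eta))\le R(A\eta)\le 1-p_1A\eta<1-\eta$, so $1$ is unstable from the left. This gives $\P(W=0)>0$ and $\P(W=1)>0$, completing (ii).

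The generating-function derivative identity and the monotone-iteration dichotomy for $f$ are routine; the one point needing care is the $\mu=\infty$ argument, where one must track the direction in which the decreasing map $R$ reverses inequalities under composition, and where the bound $h(\eta)\ge A\eta$ holds only below an $A$-dependent threshold — but no input is required beyond monotone convergence for $G'(1^-)$. (A genuinely borderline situation outside this dichotomy is $p_1=0$ with $\mu=\infty$, where the relevant ``derivative'' is of the form $0\cdot\infty$; this case is most naturally excluded from the statement, and I would not attempt to cover it.)
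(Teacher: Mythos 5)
Your proof is correct, and for the finite-mean case it is essentially the paper's argument: both rest on the identity $f'(0)=f'(1)=G'(1)G'(0)=p_1\mu$ together with the fact (via Theorem \ref{thm:unscaled}(b)) that $0$ and $1$ carry atoms exactly when they are unstable fixed points of $f$; you simply spell out the derivative-to-stability step (sign of $f(x)-x$ near the endpoints plus monotone iteration) that the paper compresses into ``a fixed point $q$ is an atom of $W$ if $f'(q)>1$, and not if $f'(q)<1$''. The genuine divergence is the case $p_1>0$, $\mu=\infty$: the paper's displayed computation explicitly assumes $\mu<\infty$ and covers the infinite-mean case instead by the probabilistic remark following the proposition -- the first player confines play to the subtree in which every odd-level vertex has exactly one child, whose even levels form a Galton--Watson process with mean offspring $p_1\mu$, so $\P(W=0)$ is bounded below by the survival probability of a supercritical process, the atom at $1$ then following from the pairing $x\in Q\iff R(x)\in Q$. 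You argue analytically instead: from $R(z)\le 1-p_1z$ and $R(1-\eta)/\eta\uparrow\mu=\infty$ (monotone convergence, using $(1-(1-\eta)^M)/\eta=\sum_{j=0}^{M-1}(1-\eta)^j$, which is indeed non-increasing in $\eta$) you get $f(x)\ge R(1-p_1x)$ with $R(1-p_1x)/x\to\infty$ near $0$, and $f(1-\eta)\le R(A\eta)\le 1-p_1A\eta<1-\eta$ near $1$, so both endpoints are unstable and Theorem \ref{thm:unscaled}(b) applies. The paper's route buys a quantitative, game-theoretic lower bound (a survival probability) and intuition about optimal play; yours stays entirely inside the generating-function framework, treats the two endpoints symmetrically without needing the $Q$-pairing under $R$, and handles the infinite-mean case uniformly with the finite-mean one. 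Your decision to leave aside $p_1=0$, $\mu=\infty$ matches the paper, which notes that in that borderline case either behaviour is possible.
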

If $p_1\mu=1$, or if $p_1=0$ and $\mu=\infty$,
either case is possible. 
The proof of the result is straightforward. Since $f=R\circ R$ 
we have $f'(x)=R'(R(x))R'(x)$. Then since $R(0)=1$ and $R(1)=0$,
and since $R'=-G'$, we have $f'(0)=f'(1)=G'(0)G'(1)=\mu p_1$
(assuming $\mu<\infty$); and we know that a fixed point $q$ of $f$
is an atom of $W$ if $f'(q)>1$, and not if $f'(q)<1$. 

There is a rather direct interpretation of the condition
$p_1\mu>1$ in terms of the Galton-Watson tree and the play
of the game. Consider the set of paths in the tree,
starting at the root, with the following property:
every vertex along the path at an odd level has only one child. 
The union of these paths gives a subtree containing
the root. For a vertex at an even level
(such as the root), the expected number of 
grandchildren in the subtree is $p_1\mu$, 
since the vertex itself has an average of $\mu$
children, and each of those has precisely one child
with probability $p_1$. Considering
only even levels, this then gives a branching
process with mean offspring $p_1\mu$; 
if $p_1\mu>1$, then this branching process is supercritical
and survives for ever with positive probability.
In that case, by keeping the game within this tree,
the first player can ensure that
the second player never has any choice at all;
all the second player's moves are forced. 
For the game truncated at level $2n$, 
the first player can choose between 
all the nodes at level $2n$ which are within the
subtree; from this it can be shown that
$\P(W=0)$ is at least as big as the probability 
that this branching process survives. 


\subsection{The case  \texorpdfstring{$f(x) \equiv x$}{f(x) equivalent to x}, and related open questions}
\label{subsec:comments_identity}
Suppose the offspring distribution is such 
that $f$ is the identity function. 
Then from (\ref{quantilerecursion}),
if we put independent values at the leaves
from any given distribution, then the value
at the root has that same distribution
(hence the statement in Theorem \ref{thm:unscaled}(a)). 
Perhaps surprisingly, this property is not restricted
to the trivial case where $p_1=1$. 


Here are some families of examples
where $f=R\circ R=(1-G)\circ(1-G)$ is the identity
(i.e.\ $R$ is an \textit{involution}):
\begin{itemize}
\item[(a)]
Any geometric distribution. If $p_k=p(1-p)^{k-1}$
for $p\in(0,1)$, then $G(x)=\frac{p}{1-(1-p)x}$ and so 
$R(x)=\frac{1-x}{1-(1-p)x}$, and one can check 
$f(x)=x$. 
\item[(b)]
Let $G(x)=\left[1-(1-x)^{1/n}\right]^n$,
for $n=1,2,3,\dots$. Via a binomial expansion,
one can express $G$ has a power series expansion
with non-negative coefficients, and $G(1)=1$, 
so $G$ is indeed a probability generating function.
The coefficient of $x^k$ is non-zero for $k\geq n$. 
\item[(c)]
Let $G(x)=1-\left(1-x^n\right)^{1/n}$, for $n=1,2,3,\dots$.
Again $G$ has a power series expansion with non-negative
coefficients summing to 1. The coefficient of $x^k$ 
is non-zero when $k$ is a multiple of $n$. 
\end{itemize}

These are far from the only cases. For a general source of examples, consider function $S(x,y)$ from $[0,1]^2$ to $[0,1]$ which is
symmetric, increasing in each coordinate, and has $S(1,0)=S(0,1)=0$. If we define a function $R$ by setting $S(x,y)=0$ and writing $y=R(x)$, then $R$ is indeed an involution. Some such functions $R$ have power series expansions, and in some of those cases $G=1-R$ has all 
coefficients positive, as needed for a probability 
generating function. For example, $S(x,y)=y^2+y+x^2+x-2=0$ gives $R(x)=[\sqrt{9-4x-4x^2}-1]/2$, in which case 
one can obtain straightforwardly that $G=1-R$ is a generating function. 

We note several questions that it might be interesting
to understand further:
\begin{itemize}
\item[(1)]
Can one describe in some nice way the 
class of all distributions for which $f$ is the identity?
For the class of examples described in the previous paragraph,
can one describe nicely which functions $S(x,y)$ lead 
to functions $R$ which have power series expansions, and 
then which of those yield a generating function $G$?
\item[(2)]
Are the geometric distributions in example (a) above
the only such distributions with finite mean?
More generally, what types of tail decay are possible? 
For (a), the tail $\sum_{r=k}^\infty p_r$ of course decays exponentially in $k$, 
while for (b) and (c) it decays as $k^{-1/n}$.
\item[(3)]
Are there direct probabilistic arguments explaining the 
fact that $f$ becomes the identity in these cases, in terms of the underlying process on the tree? 

One case where it's possible to make such an argument is the $n=2$ case in (b) above. Here $p_k$
is the probability that the cluster containing the origin
has size $k$ for critical percolation on the binary tree (these coefficients are closely related to the Catalan numbers). 
Having made this identification, one can connect the 
minimax recursion on our random tree to an analogous
recursion in the model studied by Pemantle and Ward \cite{Pemantle},
of a binary tree in which each node independently is a max or a a min with probability $1/2$ each. 
\end{itemize}

We end this section with two further open questions 
about the form of $f$ in more general cases:
\begin{itemize}
\item[(4)] 
Can $f$ have an arbitrarily large number of fixed points in $[0,1]$?
\item[(5)]
Can $f$ have infinitely many fixed points in $[0,1]$ (without being equal to the identity)? Since $f$ is analytic on $(0,1)$,
this would require the set of fixed points to accumulate 
at $0$ and at $1$. 
\end{itemize}

\section{Endogeny}
\label{sec:endogeny}
Suppose we play the game on a tree where the depth $2n$ is large,
so that the boundary values are far from the root. 
To be confident of making a good first move, do we need to see 
a large part of the structure of the tree, and the boundary values? -- or can we play close to optimally by inspecting
just the structure of the first few levels of the tree?
This is a so-called \textit{endogeny} question \cite{AldBan}.
The answer to this question again depends 
on the offspring distribution and
the distribution of the boundary values. 

To formalise the question,
first define an operator on distributions 
corresponding to the recursion given by 
(\ref{eq:rde}). For a distribution $\mu$
on $[0,1]$, let $T(\mu)$ be the distribution 
of the left-hand side of (\ref{eq:rde})
when the random variables $W_{2n-n}^{(i,j)}$ 
on the right-hand side are i.i.d.\ with 
distribution $\mu$. Equivalently, 
rewriting (\ref{quantilerecursion}),
$T(\mu)[0,y]=f(\mu[0,y])$ for all $y$. 

We will be interested in fixed points of $T$. 
For example, for offspring distributions such that
$f$ is the identity, \textit{every} $\mu$ is a fixed point of $T$.
For more general offspring distributions, 
whenever $x$ is a fixed point of $f$, the Bernoulli distribution
which puts mass $x$ at $0$ and $1-x$ at $1$ is a fixed point of 
$\mu$; for a game with Bernoulli terminal values, there are only
two possible values of the outcome and we can interpret $0$ 
as a win and $1$ as a loss (from the perspective of the first player).

Suppose indeed that $\mu$ is a fixed point of the operator $T$. 
Consider a tree of depth $2n$ 
(given by the Galton-Watson tree truncated at level $2n$)
with the terminal values drawn independently from $\mu$.
Then the distribution of the value at the root is also $\mu$.
More generally, consider the structure of the first $k$ levels
of the tree;
the distribution of these first $k$ levels is the same
for any $n$ (such that $k\leq 2n$).

As a consequence of this \textit{consistency} 
over different values of $n$, we may let $n\to\infty$
and, applying Kolmogorov's extension theorem, obtain
a distribution of the entire infinite Galton-Watson tree
along with values attached to each node which
obey the minimax recursions (min at even levels,
max at odd levels). 

This gives a \textit{stationary recursive tree process}
in the language of \cite{AldBan}. The relevant stationarity
property is the following: condition on the structure of 
the first two levels of the tree, and write 
$v_1, \dots, v_r$ for the level-$2$ nodes. 
Conditional on the structure of the first two levels, 
the structure of the subtrees rooted at $v_1, \dots, v_r$,
along with the values associated to the nodes of those
subtrees, are given by $r$ i.i.d.\ copies of the original tree process.
(More precisely, we might describe the tree process as ``2-periodic"
rather than stationary, since even and odd levels differ; we
can recover a genuinely stationary process by considering only even levels.) 

For a more formal and more general set-up, see for 
example \cite{AldBan} or \cite{MachSturmSwart}. 

We have defined a joint distribution of the structure
of the tree and the values associated to each node of the tree. 
Now the recursive tree process is said to be \textit{endogenous} if the value associated to the root
is measurable with respect to the structure of the tree. 
Note that for the same offspring distribution, 
this endogeny property may hold for some 
fixed point distributions $\mu$ and not for others. 

Being measurable with respect to the structure of the
tree is equivalent to being approximable to any 
given degree of accuracy using the information only of
some finite portion of the tree. That is, 
for any random variable $X$ (in particular, the root value),
$X$ is measurable with respect to the structure of the tree if,
for any $x$ and any $\epsilon>0$,
there exists $k$ such that with probability $1-\epsilon$,
the conditional probability of the event $\{X\leq x\}$, 
given the structure of the first $k$ levels of the tree
is in $[0,\epsilon]\cup[1-\epsilon,1]$, where $X$ denotes
the value at the root.

For a more concrete interpretation, we can concentrate
only on the case of finite trees, truncated at some level 
$2n$. Then the property in the previous paragraph can be
reformulated to say that the value at the root can be 
approximated arbitrarily closely using information
from the structure of some appropriate number of levels 
at the top of the tree, \textit{uniformly} in the value of $n$. 

Note that endogeny does \textit{not} indicate that
the value at the root is insensitive to arbitrary changes
in the boundary conditions. In our case, that would
be trivially false. Rather, for a given distribution
of boundary conditions, endogeny expresses the property
that, if the boundary is far away, the root is typically not much affected by the difference between various 
realisations drawn from that distribution. In particular, 
endogeny may hold for some boundary distributions and
not for others, as is indeed the case for our model.

Consider in particular the Bernoulli (``win/loss") boundary
conditions described above. 

\begin{theorem}\label{thm:endogeny}
Let $x\in(0,1)$ be a fixed point of $f$,
and consider the stationary recursive tree process 
with Bernoulli($1-x$) marginals for the values at even levels.
The process is endogenous if and only if $f'(x)\leq 1$.
\end{theorem}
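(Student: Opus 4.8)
The plan is to characterize endogeny via the standard "bivariate uniqueness" / distributional fixed-point criterion of Aldous and Bandyopadhyay: the recursive tree process is endogenous if and only if the only fixed point of the associated bivariate recursion with the correct marginals is the diagonal one. Concretely, let $\mu$ be the Bernoulli($1-x$) marginal at even levels. Run two copies of the minimax recursion on the \emph{same} Galton-Watson tree, coupled through a common bivariate boundary distribution on $\{0,1\}^2$ with both marginals equal to $\mu$; write $\nu_{2n}$ for the law of the resulting pair of root values. Endogeny is equivalent to the statement that $\nu_{2n}$ converges to the fully-correlated coupling (the pair is a.s.\ equal) for every admissible starting bivariate boundary law, equivalently that the only $T^{(2)}$-invariant bivariate law with marginals $\mu$ is the diagonal one. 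Because the values live in $\{0,1\}$, a bivariate law with marginals $\mu$ is completely described by a single parameter, say $\theta = \P(\text{both equal }1) \in [\max(0,1-2x),\,1-x]$, and the diagonal corresponds to $\theta = 1-x$. So the whole question reduces to understanding a one-dimensional recursion $\theta \mapsto \Phi(\theta)$ on this interval, where $\Phi$ is built from the min/max operations composed over two levels of the Galton-Watson tree.

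The first concrete step is to derive $\Phi$ explicitly. Fix the bivariate boundary law with parameter $\theta$. A min over $M_i$ i.i.d.\ pairs: the pair of mins is $(1,1)$ iff all pairs are $(1,1)$, $(0,0)$ iff at least one pair is $(0,0)$ in each coordinate simultaneously... — more cleanly, it is easiest to track the four probabilities $(a,b,c,d)$ for the pattern $(00),(01),(10),(11)$ and see how min and then max act, each combined with a size-biased-free Galton-Watson generating-function average. After composing "max over $M_i$ children" and "min over $M$ children" one obtains $\Phi$ in terms of $G$, $R=1-G$ and $f=R\circ R$; crucially the marginals are preserved (this is just \eqref{quantilerecursion} with $\P(W\le 0)=x$), and the correlation parameter follows its own closed recursion. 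I expect $\Phi$ to be smooth, to fix the diagonal value $1-x$, and — this is the key computation — to have derivative at the diagonal equal to $f'(x)$ (or a simple monotone function thereof, e.g.\ $f'(x)$ itself), because near the diagonal the discrepancy between the two copies propagates exactly like a linearization of the quantile recursion \eqref{quantilerecursion} at the fixed point $x$. Establishing $\Phi'(1-x) = f'(x)$ is the heart of the matter.

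Given that, the two directions follow from elementary analysis of the scalar map $\Phi$ on its interval, together with monotonicity/attractivity arguments from the Aldous–Bandyopadhyay framework. If $f'(x) < 1$: the diagonal is a locally attracting fixed point of $\Phi$, and one wants it to be the \emph{only} fixed point with the given marginals; here I would use a monotonicity argument — the bivariate recursion is monotone in the natural ordering (more correlated in $\Rightarrow$ more correlated out), $\Phi$ is concave/convex enough that $\Phi(\theta) > \theta$ for $\theta$ below the diagonal and $\Phi(\theta)<\theta$ above (using $f' < 1$ and, where needed, that $f$ crosses the diagonal transversally at $x$, i.e.\ $x$ is an isolated fixed point), so iteration from any admissible $\theta$ converges to $1-x$; hence the bivariate fixed point is unique and the process is endogenous. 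The boundary case $f'(x)=1$ requires care: here one shows that although the linearization is neutral, the second-order behavior still pushes $\theta$ toward the diagonal (this is exactly the regime of Theorem~\ref{thm:rescaled}(b), where $W_{2n}-x$ decays only polynomially) — so endogeny still holds, which is why the statement is "$\le 1$" rather than "$<1$". If $f'(x) > 1$: the diagonal is repelling for $\Phi$, so there is a nontrivial fixed point $\theta^\ast \ne 1-x$ (or at least the iterates started strictly off-diagonal do not converge to the diagonal), giving a non-endogenous bivariate fixed point; equivalently one exhibits a subsequence along which the conditional law of the root value given $k$ top levels stays bounded away from $\{0,1\}$ with non-vanishing probability. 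The main obstacle, as noted, is the derivative computation $\Phi'(1-x)=f'(x)$ and, secondarily, handling the delicate equality case $f'(x)=1$ — for the latter I would import the polynomial-decay estimates already developed for Theorem~\ref{thm:rescaled}(b) rather than redoing them.
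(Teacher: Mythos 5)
Your proposal follows the same route as the paper (bivariate uniqueness in the sense of Aldous--Bandyopadhyay/Mach--Sturm--Swart, reduction to a one-parameter family of bivariate laws on $\{0,1\}^2$ with the given Bernoulli marginals, and analysis of the induced scalar recursion near the diagonal), but it stops short of the steps that actually constitute the proof. The map you call $\Phi$ is never derived, and its derivative at the diagonal --- which you yourself flag as ``the heart of the matter'' --- is only conjectured. In the paper's parameterisation $b=\mu^{(2)}(1,0)$ (your $\theta=1-x-b$), a two-level computation with inclusion--exclusion gives the explicit map
\begin{equation*}
h(b)=R\bigl(2R(x)-R(x-b)\bigr)-R\bigl(R(x)\bigr),
\end{equation*}
from which $h(0)=0$ and $h'(0)=R'(R(x))R'(x)=f'(x)$ follow by differentiation. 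More importantly, the local derivative at the diagonal does not settle either direction: for $f'(x)\leq 1$ you must exclude fixed points of $h$ anywhere in $(0,\min(x,1-x)]$, and for $f'(x)>1$ you must produce one. The paper gets both from global structure: since $R'=-G'<0$ and $R''=-G''<0$, the map $h$ is increasing and strictly concave, so $h'(0)\leq 1$ forces $h(u)<u$ for all $u>0$ (uniqueness, hence endogeny), while $h'(0)>1$ gives $h(\epsilon)>\epsilon$ and monotone iteration yields a nontrivial fixed point (non-endogeny). Your phrases ``concave/convex enough'' and ``monotone in the natural ordering'' are precisely what has to be proved, and nothing in the proposal supplies it; in particular, uniqueness when $f'(x)<1$ cannot be deduced from the transversality/isolatedness of the fixed point $x$ of $f$ alone.

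Your treatment of the boundary case $f'(x)=1$ is also aimed at the wrong object. The estimates behind Theorem \ref{thm:rescaled}(b) concern the univariate iteration $f^n$ near $q$; what matters here is the second-order behaviour of the bivariate correlation map at the diagonal, and $h''(0)=R''(R(x))R'(x)^2-R'(R(x))R''(x)$ is not governed by $f''(x)$ (the second term enters with the opposite sign, and in fact $h''<0$ throughout regardless of the sign or vanishing of $f^{(k)}(x)$). Because of this strict concavity, the paper needs no separate argument at $f'(x)=1$: the cases $f'(x)<1$ and $f'(x)=1$ are handled uniformly, whereas importing the polynomial-decay analysis of the univariate recursion would not by itself show that the bivariate iterates are driven to the diagonal. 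Finally, when invoking the equivalence between endogeny and bivariate uniqueness you should record the continuity hypothesis on the bivariate operator needed for the converse implication (it does hold in this setting, as the paper notes).
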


So, approximately speaking, the endogenous processes with Bernoulli
marginals correspond to the \textit{stable}
fixed points of the function $f$,
which are those fixed points which do \textit{not}
appear as atoms in the distribution of the limiting 
random variable $W$ in Theorem \ref{thm:unscaled}.
(An exception may occur when the derivative of $f$ at a fixed point is precisely 1; further, in the cases $x=0$ and $x=1$ the values are constant and the process is trivially endogenous.)

To prove Theorem \ref{thm:endogeny}, we use
a characterisation of endogeny in terms of 
uniqueness of bivariate distributions,
introduced by Aldous and Bandyopadhyay in \cite{AldBan} and proved in somewhat more generality by Mach, Sturm and Swart \cite{MachSturmSwart}.
See Section \ref{sec:endogeny_proof} for details.

For offspring distributions where $f$ is the identity, 
any distribution $\mu$ gives rise to a recursive tree process. 
In particular, we can take $\mu$ to be the uniform distribution
on $[0,1]$, as we did in previous sections.
We have the following corollary of Theorem \ref{thm:endogeny}:

\begin{corollary}\label{cor:endogeny}
Suppose $f$ is the identity. Then for any $\mu$, the recursive tree process with marginals $\mu$ for the values at even levels
is endogenous.
\end{corollary}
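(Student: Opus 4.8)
\emph{Proof proposal.} The plan is to reduce the statement to the Bernoulli case already settled in Theorem \ref{thm:endogeny}. Since $f$ is the identity, every point of $(0,1)$ is a fixed point of $f$ with derivative equal to $1$, so Theorem \ref{thm:endogeny} tells us that, for every $x\in(0,1)$, the recursive tree process whose even-level values are Bernoulli and equal to $1$ with probability $x$ (i.e.\ the fixed point ``$x$'' of that theorem is $1-x$) is endogenous. The task is then to pass from these Bernoulli processes to an arbitrary $\mu$ by thresholding.

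Fix the recursive tree process $(W_v)_v$ with even-level marginal $\mu$ obtained by the finite-truncation-plus-Kolmogorov construction described above, carried on a probability space together with the (random, infinite) tree structure, which I denote $\mathcal{T}$. For $y\in[0,1]$ set $X^{(y)}_v=\ind\{W_v\le y\}$. The key observation is that thresholding commutes with the minimax recursion: a min-node (even level) has value $\le y$ if and only if at least one of its children does, and a max-node (odd level) has value $\le y$ if and only if all of its children do. Hence $(X^{(y)}_v)_v$ obeys the Boolean minimax recursion (OR at even levels, AND at odd levels). Restricting to the tree truncated at level $2n$, where $X^{(y)}_v=\ind\{U_v\le y\}$ at the leaves are i.i.d.\ with $\P(X^{(y)}_v=1)=\mu[0,y]$, this is precisely the finite minimax tree with i.i.d.\ Bernoulli leaves; letting $n\to\infty$ and using the consistency of the construction, the joint law of $\big((X^{(y)}_v)_v,\mathcal{T}\big)$ is that of the recursive tree process whose even-level values are Bernoulli and equal to $1$ with probability $\mu[0,y]$, together with its tree structure.

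Now pick a countable dense set $D\subset(0,1)$. For $y\in D$ with $\mu[0,y]\in(0,1)$, the previous two paragraphs show that $\big((X^{(y)}_v)_v,\mathcal{T}\big)$ is an endogenous recursive tree process, so $X^{(y)}_{\mathrm{root}}=\ind\{W_{\mathrm{root}}\le y\}$ is almost surely equal to a $\sigma(\mathcal{T})$-measurable function; for $y\in D$ with $\mu[0,y]\in\{0,1\}$ the variable $X^{(y)}_{\mathrm{root}}$ is almost surely constant and the same conclusion is trivial. Since $W_{\mathrm{root}}\in[0,1]$ and $D$ is dense, one has almost surely
\[
W_{\mathrm{root}}=\inf\!\big(\{1\}\cup\{y\in D:\ \ind\{W_{\mathrm{root}}\le y\}=1\}\big),
\]
which exhibits $W_{\mathrm{root}}$ as a countable combination of $\sigma(\mathcal{T})$-measurable functions; hence $W_{\mathrm{root}}$ is measurable with respect to the tree structure, i.e.\ the process with even-level marginal $\mu$ is endogenous.

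I expect the only point requiring genuine care to be the identification in the second paragraph: one must make precise that the thresholded array $(X^{(y)}_v)_v$, \emph{jointly with the tree $\mathcal{T}$}, is distributed as the Bernoulli recursive tree process of Theorem \ref{thm:endogeny}, so that that theorem applies on the nose. This is conceptually immediate from ``thresholding commutes with minimax,'' but should be phrased via the finite truncations (on which the two processes are literally the same finite minimax tree) and then passed to the limit through the Kolmogorov construction. Everything else is routine; in particular one should note explicitly that the property being transferred — measurability of the root value with respect to the tree structure — is exactly the definition of endogeny used here.
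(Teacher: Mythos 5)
Your proposal is correct and follows essentially the same route as the paper: apply Theorem \ref{thm:endogeny} (every $x\in(0,1)$ is a fixed point with $f'(x)=1$) to the thresholded Bernoulli processes $\ind\{W_{\mathrm{root}}\le y\}$, then recover $W_{\mathrm{root}}$ itself as tree-measurable via a countable dense set of thresholds. The paper states this in two sentences; your write-up simply makes explicit the identification of the thresholded array with the Bernoulli recursive tree process, which is the step the paper leaves implicit.
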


\section{Proofs: convergence and scaling limits}
\label{sec:distribution_proofs}
First, we show how (\ref{quantilerecursion})
follows from 
the recursive distributional equation (\ref{eq:rde}). As at (\ref{eq:rde}),
let $M$ and $M_1, M_2, M_3$ be i.i.d.\ draws 
from the offspring distribution, and $W_{2n-2}^{(i,j)}$ 
i.i.d.\ copies of the random variable $W_{2n-2}$, 
independent of $M$ and $\{M_i, i\geq 1\}$.

Note that for any given $i$,  
\begin{align}
\nonumber
\P\left(
\max_{1\leq j\leq M_i} W_{2n-2}^{(i,j)}>x
\right)
&=1-\P\left(W_{2n-2}^{(i,j)}\leq x
\text{ for } j=1,\dots,M_i\right)\\
\nonumber
&=1-\sum_m p_m\P\left(W_{2n-2}\leq x\right)^m\\
&=R\big(\P(W_{2n-2}\leq x)\big).
\label{onestep}
\end{align}

So from (\ref{eq:rde}) we have
\begin{align}
\P\left(W_{2n}\leq x\right)
&=\P\left(
\min_{1\leq i\leq M}
\max_{1\leq j\leq M_i}
W^{(i,j)}_{2n-2}\leq x
\right)
\nonumber
\\
&=1-\P\left(
\max_{1\leq j\leq M_i}
W^{(i,j)}_{2n-2}
>x \text{ for all }j=1,\dots, M_i
\right)
\nonumber
\\
&=1-\sum_m p_m\left[R\left(\P\left(W_{2n-2}\leq x\right)\right)\right]^m
\nonumber
\\
&=R\big(R\big(\P(W_{2n-2}\leq x)\big)\big)
\nonumber
\\
\nonumber
&=f\big(\P(W_{2n-2}\leq x)\big),
\end{align}
giving (\ref{quantilerecursion}) as desired. 

\begin{proof}[Proof of Theorem \ref{thm:unscaled}]
From the previous line and the monotonicity of $f$ we see that $\lim_{n\to\infty} \P(W_{2n} \leq x)=\lim_{n\to\infty} f^n(x)$ exists for all $x$, and therefore $W_{2n}$ indeed converges in distribution as $n \to \infty$, to a limit $W$ with the distribution function $F_W(x)=\lim_{n\to\infty} f^n(x)$.  

Part (a) is immediate from (\ref{quantilerecursion}).
For part (b), note that since $f$ is analytic in $(0,1)$ and $f$ is not the identity function, the set of fixed points of $f$ 
cannot have an accumulation point in $(0,1)$. 
Therefore, this set of fixed points of $f$ defines a partition of the interval $(0,1)$ into a disjoint union of open intervals plus the set of fixed points, each of which is an endpoint of exactly two intervals from the partition. 
Since $f$ is monotone and continuous,
$F_W(x) = \lim_{n \to \infty} f^n (x)$ is constant on those intervals; therefore $W$ can have atoms only at fixed points of $f$. 

Suppose $q\in(0,1)$ is such a fixed point. Then 
\begin{align}
\nonumber
\P(W=x)
&=
\lim_{\epsilon\to 0}
\P(q-\epsilon<W\leq x+\epsilon)\\
&=\lim_{\epsilon\to 0}\lim_{n\to\infty}f^n(q+\epsilon) - \lim_{\epsilon\to 0} \lim_{n\to\infty}f^n(q-\epsilon).
\label{eq:PW}
\end{align}
Since $f$ is monotone and continuous, the quantity above
is equal to 0 precisely if and only if the fixed point $q$ is stable. 
Hence indeed $W$ has an atom at $q$ precisely if $q$ 
is unstable from at least one side.
As commented immediately after Theorem \ref{thm:unscaled},
the right-hand side of (\ref{eq:PW}) is equal to
$q_+-q_-$, as required.

The cases where $q=0$ or $q=1$ follow in a similar way.
\end{proof}

The rest of this section is devoted to the proof of Theorem 
\ref{prof:prop_main}.

\subsection{Proof of Theorem 
\ref{prof:prop_main}(\ref{thm:main_greater_t_1}): \texorpdfstring{$ 1 < f'(q) < \infty$}{1 < f'(q) < infinity}} 
\label{subsec:scaling_proof}
Firstly we assume that $q$ is the unique fixed point of $f$ in $(0,1)$ and that it is unstable from both sides. In the second part of the proof we show how Lemma \ref{lm:khan_extension} below implies the general case.

Suppose $\xi = f'(q) > 1$. An example of this case is in Figure
\ref{figure:ternary}(a), where $p_1=0.45$ and $p_3=0.55$.

We will prove the following result:
\begin{lemma}
Consider the recursion (\ref{quantilerecursion}) and assume that $q$ is the unique fixed point of $f$ in $(0,1)$ and that it is unstable from both sides. Set $\xi = f'(q)$. If $\xi > 1$, then
\begin{align*}
\xi^n(W_{2n} - q) \xrightarrow[]{d} V \textrm{ as } n \rightarrow \infty,
\end{align*}
where the distribution function $F_V$ of $V$ is continuous and satisfies
\begin{align*}
F_V(x) = f(F_V(x/\xi)), \quad x \in \R.
\end{align*}
\label{lm:khan_extension}
\end{lemma}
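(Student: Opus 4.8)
The plan is to follow the strategy of Ali Khan--Devroye--Neininger \cite{ADN} for the regular tree, adapting it to a general analytic $f$ with a single hyperbolic (repelling) fixed point $q$ in $(0,1)$. Write $F_n(x) = \P(W_{2n}\le x)$, so that $F_n = f^n(F_0)$ with $F_0$ the uniform distribution function, and set $G_n(x) = F_n(q + \xi^{-n}x)$, which is the distribution function of $\xi^n(W_{2n}-q)$. We want to show $G_n$ converges pointwise (at continuity points) to a continuous distribution function $F_V$ solving $F_V(x) = f(F_V(x/\xi))$.

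First I would establish the deterministic analytic facts about iterating $f$ near $q$. Since $f$ is analytic on $(0,1)$, $f(q)=q$, $f'(q)=\xi>1$, and $q$ is the unique fixed point of $f$ in $(0,1)$ unstable from both sides, classical (Koenigs) linearisation gives an analytic conjugacy $\phi$ defined on a neighbourhood of $q$ with $\phi(q)=0$, $\phi'(q)=1$, and $\phi(f(x)) = \xi\,\phi(x)$; equivalently $f^n(x) = \phi^{-1}(\xi^n\phi(x))$ for $x$ near $q$. Because $q$ is the only interior fixed point and is repelling from both sides, for any $y_0\in(0,1)$ with $y_0<q$ (resp. $y_0>q$) the orbit $f^n(y_0)$ decreases to $0$ (resp. increases to $1$), but more importantly, for $y_0$ near $q$ we can track the rate: $f^n(y_0)\to 0$ or $1$ and, crucially, for a fixed $x\in\R$, the point $q+\xi^{-n}x$ lies in the linearisation neighbourhood for all large $n$, so
\begin{align*}
f^n\bigl(F_0(q+\xi^{-n}x)\bigr) = \phi^{-1}\Bigl(\xi^n\,\phi\bigl(F_0(q+\xi^{-n}x)\bigr)\Bigr).
\end{align*}
The key quantity to control is $\xi^n\phi(F_0(q+\xi^{-n}x))$. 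Writing $F_0$ near the relevant point and using $\phi'(q)=1$, a Taylor expansion shows $\phi(F_0(q+\xi^{-n}x)) = \phi(F_0(q)) + \xi^{-n}x\,(F_0\circ{\rm stuff})'+ o(\xi^{-n})$; the leading term $\phi(F_0(q))$ is a nonzero constant (since $F_0(q)=q$ is not a fixed point of $f$ unless $F_0(q)=q$, which it is — so I must be careful here: actually $F_0(q)=q$ exactly, so $\phi(F_0(q))=\phi(q)=0$, and one must go to the next order). So the honest statement is that $\xi^n\phi(F_0(q+\xi^{-n}x))$ converges, as $n\to\infty$, to a finite limit $L(x)$, because each application of $f$ to a point at distance $\Theta(\xi^{-n})$ from $q$ multiplies the signed distance by a factor converging to $\xi$, and the product of these factors over the first $n$ steps converges (this is where one uses analyticity: the ratios $\bigl(f^{n}(z)-q\bigr)/\bigl(\xi(f^{n-1}(z)-q)\bigr)\to 1$ fast enough to make the infinite product converge). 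Define $F_V(x) = \phi^{-1}(L(x))$ — actually more directly $F_V(x) = \lim_n f^n(F_0(q+\xi^{-n}x))$, the existence of which is exactly this computation. Monotonicity in $x$ and the bounds $0\le F_V\le 1$ are immediate; the functional equation $F_V(x)=f(F_V(x/\xi))$ falls out of $f^{n+1}(F_0(q+\xi^{-(n+1)}\cdot x)) = f\bigl(f^n(F_0(q+\xi^{-n}\cdot(x/\xi)))\bigr)$.

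The remaining points are that $F_V$ is a genuine (continuous) distribution function: $F_V(x)\to 0$ as $x\to-\infty$ and $\to 1$ as $x\to+\infty$ follow from instability of $q$ on both sides (for fixed large $|x|$ of either sign, $q+\xi^{-n}x$ starts at macroscopic distance on the correct side and its orbit under $f$ runs to $0$ or $1$); and continuity of $F_V$ follows from the functional equation together with strict monotonicity — if $F_V$ had a jump at $x_0$ it would have jumps at $\xi^k x_0$ for all $k\ge 0$, forcing infinitely many jumps of comparable size inside a bounded range near where $F_V$ is bounded away from $0,1$, or alternatively one argues the limit is continuous because $f$ is a strict contraction of jump sizes near a point where $F_V\in(0,1)$... more cleanly: $F_V$ solves $F_V = f\circ F_V(\cdot/\xi)$ and any bounded monotone solution is continuous because $f$ being analytic with $f'>1$ at its only interior fixed point forces the solution to have no atoms (an atom at $x_0$ would push mass out to $\pm\infty$ under the dilation, contradiction). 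I expect \textbf{the main obstacle} to be precisely the convergence of $\xi^n\phi(F_0(q+\xi^{-n}x))$ to a finite nonzero-on-a-set limit $L(x)$ — i.e. making rigorous that iterating the nonlinear $f$ on a sequence of points shrinking to $q$ at the exact rate $\xi^{-n}$ produces a limit, with the right uniformity in $x$ on compacts; this is the heart of the ADN-type argument and uses analyticity of $f$ in an essential way (Koenigs linearisation, or equivalently a contraction-mapping / telescoping-product estimate on the conjugating functions). The second part of the subsection, reducing the general case (several interior fixed points) to this Lemma via conditioning on $W_{2n}\in[q_-,q_+]$, I would handle separately: on that event the relevant dynamics is governed by the restriction of $f$ to $[q_-,q_+]$, on which $q$ is the unique fixed point unstable from both sides, so the same argument applies after an affine renormalisation of the boundary distribution to that subinterval.
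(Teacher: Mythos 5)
Your route via Koenigs linearisation is genuinely different from the paper's argument, which is elementary: the paper sandwiches $g_n(x)=f^n(q+x/\xi^n)$ between $q+x$ and $q+x+cx^k$ by induction (Lemma \ref{lm:bound_on_g}), proves eventual monotonicity of $n\mapsto g_n(x)$ (Lemma \ref{lm:monotonicity_g_n}) to obtain a pointwise limit $g$ satisfying $g(x)=f(g(x/\xi))$ with $g(0)=q$, $g'(0)=1$, and then proves continuity by bounding $\sup_n g_n'$ on small one-sided neighbourhoods of $0$ and propagating via the functional equation. A linearisation argument could be made to work, and would even deliver continuity for free, but as written your proposal has concrete gaps. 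First, the identity $f^n(q+\xi^{-n}x)=\phi^{-1}\bigl(\xi^n\phi(q+\xi^{-n}x)\bigr)$ is not valid for fixed $x$ and large $n$: since $q$ is repelling, the conjugacy applies only while the entire orbit remains in the linearisation neighbourhood, and for $|x|$ beyond the size of that window $\phi^{-1}(x)$ is not even defined --- indeed $f^n(q+\xi^{-n}x)$ eventually leaves every neighbourhood of $q$. The repair is to split off finitely many iterations, $f^n(q+\xi^{-n}x)=f^m\bigl(f^{n-m}(q+\xi^{-n}x)\bigr)\to f^m\bigl(\phi^{-1}(x/\xi^m)\bigr)$ for $m=m(x)$ fixed large, checking consistency in $m$; you do not do this, and relatedly the step you single out as the main obstacle (convergence of $\xi^n\phi(q+\xi^{-n}x)$) is in fact immediate from $\phi(q)=0$, $\phi'(q)=1$.

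Second, your continuity argument fails: monotone solutions of $F=f\circ F(\cdot/\xi)$ can perfectly well have atoms --- the step function equal to $0$ for $x<0$ and $1$ for $x\geq 0$ satisfies the equation (as does the constant function $q$), so ``an atom would push mass out to $\pm\infty$'' is not a contradiction, and the functional equation plus monotonicity forces nothing. Continuity and non-degeneracy must come from the construction itself, either from the repaired formula $F_V(x)=f^m(\phi^{-1}(x/\xi^m))$ (composition of continuous, strictly increasing maps near the relevant points), or, as in the paper, from the sandwich bound giving $g(0)=q$, $g'(0)=1$ together with the uniform bounds on $g_n'$. Your argument for the tail limits is also incorrect as stated: for fixed $x$ the starting point $q+\xi^{-n}x$ tends to $q$, so it is never at ``macroscopic distance'' from $q$; the paper instead observes that $\lim_{x\to\pm\infty}g(x)$ must be fixed points of $f$ and uses $g(0)=q$, $g'(0)=1$ and uniqueness of the interior fixed point to conclude the limits are $0$ and $1$.
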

Lemma \ref{lm:khan_extension} extends the result of Ali Khan, Devroye and Neininger \cite{ADN} to the case of random trees. Note that Lemma \ref{lm:khan_extension} corresponds directly to the part (\ref{thm:main_greater_t_1}) of Theorem \ref{prof:prop_main} for $f$ having a unique fixed point in $(0,1)$ which is unstable, as then $q_- = 0$ and $q_+ = 1$.

\begin{proof}[Proof of Lemma \ref{lm:khan_extension}]
We follow the lines of \cite{ADN} but in our case the analysis is slightly more complicated because of the more general form that $f$ can admit. We first prove that there exists a pointwise limit of distribution functions of $\xi^n(W_{2n} - q)$, which is not identical to $0$ or $1$, and then show that it is continuous, which completes the proof. Define  
\begin{align*}
g_n(x) = \P \left(\xi^n(W_{2n} - q) \leq x \right), \quad x \in \R
\end{align*}
Therefore, for each $x$ for sufficiently large $n$ (such that $0 \leq q + \frac{x}{\xi^n} \leq 1$),
\begin{align*}
g_n(x) = \P \left(W_{2n} \leq q + \frac{x}{\xi^n} \right)= f^n \left(q + \frac{x}{\xi^n} \right).
\end{align*}
Note that $g_n(0) = q $ for all $n$. We need some local uniform bound for $g_n$ around $x = 0$. This will be supplied by the following lemma:
\begin{lemma}
Under the assumptions of Lemma \ref{lm:khan_extension}, let $k$ be the smallest number larger than $1$ such that $f^{(k)}(q) \neq 0$. Denote $h_1(x) = q + x$ and $h_2(x) = q + x + cx^k$ for $x \in \R$. Then there exist $c$ and an $\varepsilon > 0$ such that for all $n$ and $|x| \leq \varepsilon$,  either $h_1(x) \leq g_n(x) \leq h_2(x)$ or $h_2 (x) \leq g_n(x) \leq h_1(x)$.
\label{lm:bound_on_g}
\end{lemma}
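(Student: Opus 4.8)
The plan is to control $g_n$ near $0$ by building supersolutions and subsolutions of the iteration $x\mapsto f^n(q+x/\xi^n)$ out of the two explicit profiles $h_1$ and $h_2$. Concretely, conjugating by the rescaling, define for a function $h$ the operator $(\Phi h)(x)=\xi\big(f(q+h(x/\xi)-q)-q\big)$, i.e.\ $\Phi h$ is the ``one-step pushforward'' of a quantile profile under the recursion viewed in the $\xi^n$-rescaled coordinate; then $g_n=\Phi^n g_0$ where $g_0(x)=q+x$ on the relevant range. The key point is monotonicity: since $f$ is increasing, $\Phi$ is monotone, so it suffices to find $\varepsilon>0$ and $c$ such that on $|x|\le\varepsilon$ the functions $h_1,h_2$ satisfy the correct one-sided inequalities under $\Phi$, namely that $h_1$ is a fixed point of $\Phi$ (which it is exactly, since $f(q+x)-q$ scaled by $\xi^{-1}$ near $0$... — more precisely $h_1$ corresponds to the linearization and one must be slightly more careful, see below) and that the band between $h_1$ and $h_2$ is mapped into itself. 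Then by induction $g_n$ stays trapped between $h_1$ and $h_2$ for all $n$, which is exactly the claim (the ``either/or'' in the statement simply reflects the sign of $c$, equivalently the sign of $f^{(k)}(q)$, which determines on which side of $h_1$ the profile $h_2$ lies).

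In more detail, I would first Taylor-expand $f$ around $q$: $f(q+u)=q+\xi u+\frac{f^{(k)}(q)}{k!}u^k+o(u^k)$ as $u\to0$, using that $f^{(r)}(q)=0$ for $1<r<k$. Since $q$ is unstable from both sides and $\xi>1$, the relevant regime is $u$ small but the factor $\xi^n$ blows up errors, so the honest statement is not that $h_1$ itself is exactly invariant but that the \emph{pair} $(h_1,h_2)$ brackets the true profile; the $cx^k$ correction term in $h_2$ is precisely calibrated to absorb the $u^k$ term in the expansion of $f$ after rescaling (note $\xi\cdot(x/\xi)^k=\xi^{1-k}x^k$, and $\xi^{1-k}<1$ since $k\ge2,\xi>1$, so the $k$-th order contributions \emph{contract} under $\Phi$ — this is what makes the band stable). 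One checks: if $h_1(x)\le y\le h_2(x)$ then $\Phi$ applied to the constant-in-$n$ bracketing functions again brackets, because (i) the linear part reproduces $h_1$, and (ii) the superlinear part, being order $x^k$ with a contracting prefactor $\xi^{1-k}$, can be dominated by $cx^k$ provided $c$ is chosen large enough relative to $\frac{f^{(k)}(q)}{k!}$ and $\varepsilon$ small enough that the $o(u^k)$ remainder is, say, at most half the leading term. The base case $n=0$ is immediate since $g_0(x)=q+x=h_1(x)$, which lies (weakly) between $h_1$ and $h_2$ regardless of the sign of $c$.

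The main obstacle I anticipate is making the induction genuinely uniform in $n$: because the coordinate is rescaled by $\xi^n$, a naive estimate picks up a factor that grows geometrically, and one must exploit the contraction $\xi^{1-k}<1$ in the $k$-th order term carefully — essentially showing that the ``error budget'' $c\varepsilon^k$ is a fixed point of the induction rather than something that doubles at each step. A clean way to organize this is to verify directly that $\Phi h_2 \preceq h_2$ and $h_1\preceq \Phi h_1$ (with $\preceq$ the appropriate one-sided pointwise order on $[-\varepsilon,\varepsilon]$, depending on $\mathrm{sgn}(c)$), since then monotonicity of $\Phi$ immediately gives $h_1\preceq g_n\preceq h_2$ for all $n$ by a two-sided monotone-iteration argument, with no accumulating constants at all. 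Choosing $c$ and $\varepsilon$: fix $c$ with $|c|>2\frac{|f^{(k)}(q)|}{k!(1-\xi^{1-k})}$ and the sign of $c$ equal to that of $f^{(k)}(q)$ if $k$ is such that... (one short sign check using that $q$ is unstable from both sides so $f(q+u)-q-\xi u$ and $u$ have a definite sign relation for small $u$), then shrink $\varepsilon$ so the $o(u^k)$ remainder in Taylor's theorem is below $\tfrac12$ the leading term on $|u|\le(1+|c|\varepsilon^{k-1})\varepsilon$; the inequalities $\Phi h_i\preceq/\succeq h_i$ then follow by the elementary estimate above. Once Lemma \ref{lm:bound_on_g} is in hand, equicontinuity of $\{g_n\}$ near $0$ (hence, by the functional equation $g_{n}(x)=f(g_{n-1}(x/\xi))$ propagated outward, on all of $\R$) follows, delivering the pointwise limit $F_V$ with $F_V(0)=q\in(0,1)$ and the self-similarity relation $F_V(x)=f(F_V(x/\xi))$, which completes the proof of Lemma \ref{lm:khan_extension}.
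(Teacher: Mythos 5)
Your proposal is correct and is essentially the paper's own proof: the same induction on $n$ using monotonicity of $f$ applied to the recursion $g_n(x)=f(g_{n-1}(x/\xi))$, the same one-step invariance checks $h_1\preceq f(h_1(\cdot/\xi))$ and $f(h_2(\cdot/\xi))\preceq h_2$ via the Taylor expansion of $f$ at $q$, and the same choice of $c$ exploiting the contraction $\xi^{1-k}<1$ of the $x^k$-coefficient (the paper's threshold is $|c|>\bigl|\tfrac{f^{(k)}(q)}{k!\,\xi(\xi^{k-1}-1)}\bigr|$, and your larger $c$ works equally well). The only blemish is your displayed operator $(\Phi h)(x)=\xi\bigl(f(q+h(x/\xi)-q)-q\bigr)$, which carries a spurious prefactor $\xi$ not present in the actual recursion for $g_n$; since your subsequent coefficient bookkeeping matches the correct map, this is a notational slip rather than a gap.
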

Note that such a number $k$ exists since we assumed that $f$ is not the identity function and $f$ is analytic at $q$.
\begin{proof}[Proof of Lemma \ref{lm:bound_on_g}]
Take any $c$ such that $c$ has the same sign as $f^{(k)}(q)$  and $|c| > \left|\frac{f^{(k)}(q)}{k! \xi (\xi^{k-1}-1)} \right|$. From analyticity of $f$, $f^{(k)}(x)$ does not change the sign on some neighbourhood of $q$. 

For simplicity assume that $k$ is even and $f^{(k)}(q) > 0$. We would generally need to consider four cases depending on the parity of $k$ and the sign of $f^{(k)}(q)$. For the other three cases the steps of the proof of the lemma are identical modulo the change of sign in the inequalities.

The proof is by induction on $n$ and makes use of Taylor's formula up to order $k$.  For $n=0$ the assertion is true, as we have
\begin{align*}
h_1(x) = q + x = g_0(x) \leq h_2(x).
\end{align*}
Note that the above holds for all $\varepsilon$, thus we will chose $\varepsilon$ later. Assume now that 
\begin{align*}
h_1(x) \leq g_{n-1}(x) \leq h_2(x)
\end{align*}
for some $n-1 \geq 0$, $|x| \leq \varepsilon$ and $\varepsilon > 0$. Since $\left| \frac{x}{\xi} \right| \leq \varepsilon$, as $|x| \leq \varepsilon$ and $\xi > 1$, and $f$ is increasing, we have
\begin{align*}
g_n(x) = f^n \left(q + \frac{x}{\xi^n} \right) = f \left(f^{n-1} \left(q + \frac{x/\xi}{\xi^{n-1}} \right)\right)= f \left(g_{n-1}\left(x/\xi \right)\right)\geq f \left(h_1 \left(x/\xi \right)\right), 
\end{align*}
and analogously
\begin{align*}
g_n(x) \leq f \left(h_2 \left(x/\xi \right)\right). 
\end{align*}
The induction proof will be completed if we can show that for some $\varepsilon > 0$, for $|x| \leq \varepsilon$,
\begin{align}
h_1(x) \leq f \left(h_1 \left(x/\xi \right)\right), \quad f \left(h_2 \left(x/\xi \right)\right)\leq h_2(x). 
\label{eq:h_less_f_of_h}
\end{align}
Taking the Taylor expansion of $f$ around $q$ at points $q + x/\xi$ and $q + x/\xi + c \left(x/\xi\right)^k$, we obtain
\begin{align*}
f(h_1(x/\xi)) = {} & q + x +\frac{1}{k!} \frac{1}{\xi^k} \left( f^{(k)}(q) \right)x^k + o(x^{k}),\\
f(h_2(x/\xi)) = {} & q + x +\frac{1}{k!} \frac{1}{\xi^k} \left( f^{(k)}(q) + \xi k! c\right)x^k + o(x^{k}).
\end{align*}
Since
\begin{align*}
0 < \frac{1}{k!} \frac{1}{\xi^k} \left( f^{(k)}(q) + \xi k! c\right) = \frac{f^{(k)}(q)}{k!\xi(\xi^{k-1}-1)} \frac{\xi^{k-1}-1}{\xi^{k-1}} + c\frac{1}{\xi^{k-1}} < c
\end{align*}
and by assumption $f^{(k)}(q) > 0$, we are therefore able to pick $\varepsilon > 0$ such that (\ref{eq:h_less_f_of_h}) holds for $|x| \leq \varepsilon$. This ends the proof of Lemma \ref{lm:bound_on_g}. 
\end{proof}
Note that if we show that for some $g$, $g_n(x) \rightarrow g(x)$ for all $x$, then the above lemma will imply that $g(x)$ is continuous and differentiable at $x=0$ with $g'(0)=1$. We now claim that for each $x$, $(g_n(x))$ is a monotone sequence for $n > n_x$. This is implied by the following lemma:

\begin{lemma}
Under the assumptions of Lemma \ref{lm:khan_extension}, for each $M$ there exists $n_M$ such that for $|x| \leq M$, $(g_n(x))$ is a monotone sequence for $n \geq n_M$.
\label{lm:monotonicity_g_n}
\end{lemma}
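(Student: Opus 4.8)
The plan is to analyze the one-step map $g_n(x) = f(g_{n-1}(x/\xi))$ near the fixed point $q$, and to show that the discrete dynamics pushes $g_n(x)$ monotonically once $n$ is large. First I would fix $M$ and restrict attention to $|x| \le M$; for $n$ large enough that $M/\xi^n \le \varepsilon$ (with $\varepsilon$ as in Lemma \ref{lm:bound_on_g}), we are in the regime where $g_n(x) = f^n(q + x/\xi^n)$ and where the bounds $h_1, h_2$ apply to $g_n(x/\xi^{\,j})$ for all relevant $j$. The key is to compare $g_{n+1}(x)$ with $g_n(x)$. Write $g_{n+1}(x) = f^{n+1}(q + x/\xi^{n+1})$ and $g_n(x) = f^n(q + x/\xi^n)$; since $f^n$ is increasing, the sign of $g_{n+1}(x) - g_n(x)$ is the sign of $f(q + x/\xi^{n+1}) - (q + x/\xi^n)$, i.e. of $f(q + y) - q - \xi y$ where $y = x/\xi^{n+1}$. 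By Taylor's formula, $f(q+y) - q - \xi y = \frac{f^{(k)}(q)}{k!} y^k + o(y^k)$ where $k$ is as in Lemma \ref{lm:bound_on_g}, so for $|y|$ small enough — which holds once $n \ge n_M$ for a suitable $n_M$ depending on $M$ — this quantity has a sign depending only on $f^{(k)}(q)$ and, if $k$ is odd, on the sign of $x$. Hence for each fixed $x$ with $|x| \le M$, the sequence $(g_n(x))_{n \ge n_M}$ is monotone (increasing or decreasing according to that sign), which is exactly the claim.

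More carefully: the sign of $f(q+y)-q-\xi y$ for small $y \ne 0$ is $\operatorname{sign}(f^{(k)}(q))$ if $k$ is even, and $\operatorname{sign}(x\cdot f^{(k)}(q))$ if $k$ is odd (since $y = x/\xi^{n+1}$ has the same sign as $x$). In all cases, for fixed $x \ne 0$ the sign does not depend on $n$ once $n$ is large, and for $x = 0$ we have $g_n(0) = q$ for all $n$, which is trivially monotone. The threshold $n_M$ is chosen uniformly over $|x| \le M$: we need $M/\xi^{n+1}$ small enough that the $o(y^k)$ remainder in the Taylor expansion is dominated by the leading term $\frac{f^{(k)}(q)}{k!}y^k$, and this is a single smallness condition on $M/\xi^{n+1}$, hence determines $n_M$ from $M$ alone. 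We should also ensure $n_M$ is large enough that $q + x/\xi^n \in [0,1]$ for all $|x| \le M$, so that the identity $g_n(x) = f^n(q+x/\xi^n)$ is valid.

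The main obstacle I anticipate is bookkeeping rather than conceptual: one must be careful that the Taylor remainder estimate is genuinely uniform over the range $|x| \le M$ (equivalently, over $|y| \le M/\xi^{n+1}$), and that the chosen $n_M$ simultaneously handles the parity cases for $k$ and the boundary constraint $q + x/\xi^n \in [0,1]$. There is also a minor point that the leading Taylor coefficient is $f^{(k)}(q)/k!$ because $f'(q) = \xi$ kills the linear term exactly in the combination $f(q+y) - \xi y$ — one should check that no intermediate derivatives $f^{(r)}(q)$ for $1 < r < k$ contribute, which is precisely the definition of $k$ in Lemma \ref{lm:bound_on_g}. Once these uniformities are in place, monotonicity follows immediately, and this in turn (combined with the bounds of Lemma \ref{lm:bound_on_g}, which keep the sequence bounded away from the trivial limits near $x=0$) will give the existence of the pointwise limit $g(x) = \lim_n g_n(x)$ needed to continue the proof of Lemma \ref{lm:khan_extension}.
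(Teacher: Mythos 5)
Your proposal is correct and follows essentially the same route as the paper: both reduce the comparison of consecutive terms to the sign of $f(q+y)-q-\xi y$ for $y=x/\xi^{n+1}$ small, determine that sign from the leading Taylor term $\frac{f^{(k)}(q)}{k!}y^k$, and propagate via monotonicity of the iterates of $f$, with $n_M$ chosen so that $|x|/\xi^n$ falls below the Taylor threshold uniformly for $|x|\le M$. The paper simply fixes one parity/sign case ($k$ even, $f^{(k)}(q)>0$) and notes the others are identical, whereas you spell out the case distinctions explicitly; there is no substantive difference.
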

\begin{proof}[Proof of Lemma \ref{lm:monotonicity_g_n}]
As in the proof of Lemma \ref{lm:bound_on_g}, we consider the case where $k$ is even and $f^{(k)}(q) > 0$ -- the other cases are identical.
Using Taylor expansion up to order $k$, there exists $\varepsilon > 0$ such that for $|y| \leq \varepsilon $, 
\begin{align}
f\left(q + y\right) \geq f(q) + f'(q)y
\label{eq:taylor_bound}
\end{align}
(recall that $f^{(i)}(q) = 0$ for $1 < i < k$). Now let $n_M = \lceil \log_{\xi}\left(\frac{M}{\varepsilon}\right)\rceil$ and note that for any $|x| \leq M$ and $n \geq n_M$, $|x/\xi^n| < \varepsilon$ and therefore by (\ref{eq:taylor_bound}),
\begin{align*}
f \left(q+\frac{x}{\xi^n} \right) \geq f(q) + f'(q)\frac{x}{\xi^{n}} = q + \frac{x}{\xi^{n-1}}.
\end{align*}
Finally, since $f^{n-1}$ is monotone increasing,
\begin{align*}
g_n(x) = f^n \left(q + \frac{x}{\xi^n} \right) = f^{n-1}\left(f\left(q+\frac{x}{\xi^n}\right)\right) \geq f^{n-1}\left(q+\frac{x}{\xi^{n-1}}\right) = g_{n-1}(x).
\end{align*}
This proves the claim. 
\end{proof}

Since $g_n(x) \leq 1$, by Lemma \ref{lm:monotonicity_g_n} $g_n(x)$ converges for all $x$ -- we denote the limit by $g(x)$. The continuity of $f$ and the fact that $g_n(x) = f(g_{n-1}(x/\xi))$ imply that
\begin{align}
g(x)=f(g(x/\xi)).
\label{eq:identity}
\end{align} 
Therefore, from the continuity of $f$ and the monotonicity of $g$, $\lim_{x \to -\infty} g(x)$ and $\lim_{x \to \infty}g(x)$ are fixed points of $f$. Using the fact that $\{0,q,1\}$ are the only fixed points of $f$, $g$ is non-decreasing, $g(0) = q$ and $g'(0) = 1$, we deduce that $\lim_{x \to -\infty} g(x) = 0$ and $\lim_{x \to \infty}g(x) = 1$. When we show that $g$ is continuous at all $x$, it will then imply that $F_V = g$. 

We apply the following strategy to show that $g$ is continuous: we showed that $g(x)$ is continuous at $0$ and now we show separately that it is continuous on some $(-\varepsilon, 0)$ and on some $(0, \varepsilon)$. 
The identity (\ref{eq:identity}), together with the continuity
of $f$, then implies that $g$ is continuous on all of $\R$
(since $\xi>1$). 

We still work under the assumption that $f^{(k)}(q) > 0$, where $k\geq 2$ is such that 
$f^{(r)}(q)=0$ for $1<r<k$ and $f^{(k)}(q)\ne 0$, and that $k$ is even (if $k$ is odd then reasoning in the two cases below should be swapped). Note that to prove that $g$ is continuous on some interval $I$, it is sufficient to show that
\begin{align}
\sup_{y \in I} \sup_{n \geq 0} g_n'(y) < \infty.
\label{eq:sup_derivative}
\end{align}
By the chain rule we obtain
\begin{align}
g_{n}'(x) = \frac{1}{\xi^n} \prod_{i=0}^{n-1} f'\left(f^i \left(q + \frac{x}{\xi^n}\right)\right).
\label{eq:chain_rule}
\end{align}

We consider first $g_n'(y)$ for $y < 0$. Since $f^{(k)}(q) > 0$, there exists an $\varepsilon>0$ such that $f'(q+y) < \xi$ for $y \in (-\varepsilon,0)$. Since $f(q) = q$ and $\xi > 1$, this implies that for $i < n$
\begin{align*}
q > f^i \left(q + \frac{y}{\xi^n}\right) > q + \xi^i \frac{y}{\xi^n} > q - \varepsilon,
\end{align*}
hence $f'\left(f^i \left(q + \frac{y}{\xi^n}\right)\right) < \xi$. By (\ref{eq:chain_rule}) we conclude that $g_n'(y) < 1$ for all $n$ and $y \in (-\varepsilon, 0)$. This implies that (\ref{eq:sup_derivative}) holds with $I = (-\varepsilon,0)$, hence $g$ is continuous on $(-\varepsilon, 0)$.

Now we turn to the case of $y>0$.
The function $f$ is non-decreasing, and 
$\xi > 1$; hence for all $0 < i < n$ and all $0 \leq y \leq x$,
\begin{align}
q  \leq f^i\left(q + \frac{y}{\xi^n}\right) \leq f^i\left(q + \frac{x}{\xi^n}\right).
\label{eq:case_y_geq_0_1}
\end{align}
Note also that
\begin{align}
f^i\left(q + \frac{x}{\xi^n}\right) \leq f^i\left(q + \frac{x}{\xi^i}\right) = g_i(x).
\label{eq:case_y_geq_0_2}
\end{align}
Now by the assumption $f^{(k)} > 0$ there exists an $\varepsilon > 0$ such that $f'(q+x)$ is strictly increasing for $x \in (0, \varepsilon)$. By the continuity of $g$ at $0$, there exists $\gamma > 0$ such that $g(x) < q + \varepsilon$ for $x \in (0, \gamma)$. By Lemma \ref{lm:monotonicity_g_n}, there exists $n_\gamma$ such that for $0 < x \leq \gamma$, $(g_i(x))$ is a monotone sequence (an increasing one, since we assume $f^{(k)}(q) > 0$) for $i > n_\gamma$.
Therefore, for $i \geq n_\gamma$, and for $0 < x < \gamma$,
\begin{align}
g_i(x) \leq g(x) < q + \varepsilon.
\label{eq:case_y_geq_0_3}
\end{align}
On the other hand, for $i < n_\gamma \wedge n$, since $f(q+x) > q + x$ for $x \in (0, 1-q)$,
\begin{align}
f^i\left(q + \frac{x}{\xi^n}\right) \leq f^{n_\gamma}\left(q + \frac{x}{\xi^n}\right) \leq f^{n_\gamma} \left(q + x\right).
\label{eq:case_y_geq_0_4}
\end{align}
$f^{n_\gamma}$ is continuous and non-decreasing, hence we may pick $\tilde \gamma > 0$ such that for $0 < x < \tilde \gamma$,
\begin{align}
f^{n_\gamma} (q + x)< q + \varepsilon.
\label{eq:case_y_geq_0_5}
\end{align}
Finally, combining (\ref{eq:case_y_geq_0_1}) -- (\ref{eq:case_y_geq_0_5}), we obtain that for all $0 < i < n$ and for all $0 \leq y \leq x \leq \gamma \wedge \tilde \gamma$,
\begin{align}
q  \leq f^i\left(q + \frac{y}{\xi^n}\right) \leq f^i\left(q + \frac{x}{\xi^n}\right) \leq q + \varepsilon.
\label{eq:case_y_geq_0_final_bound}
\end{align}
Recall that $\varepsilon$ was chosen to be such that $f'$ is strictly increasing on $(q, q+\varepsilon)$. Combining this with (\ref{eq:chain_rule}) and (\ref{eq:case_y_geq_0_final_bound}) we obtain that 
\begin{align}
g_n'(y) \leq g_n'(x).
\label{eq:g_n_prim_selfbound}
\end{align}
We are now going to use (\ref{eq:g_n_prim_selfbound}) to show that (\ref{eq:sup_derivative}) holds for all for $I = (0, \varepsilon)$ with $ \varepsilon = \frac{1}{2} (\gamma \wedge \tilde \gamma)$. For each $z \in ( \varepsilon, 2\varepsilon)$ and all $n \geq 0$, by (\ref{eq:g_n_prim_selfbound}) we have
\begin{align*}
\sup_{y \in (0,\varepsilon)} g_{n}'(y) \leq g_n'(z).
\end{align*}
Therefore,
\begin{align*}
 \sup_{n \geq 0}\sup_{y \in (0,\varepsilon)} g_n'(y) {} & \leq \sup_{n \geq 0} \frac{1}{\varepsilon} \int_{\varepsilon}^{2\varepsilon} g_n'(z)\textrm{d}z \\
{} & \leq \sup_{n \geq 0}\frac{1}{\varepsilon} (g_n(2\varepsilon) - g_n(\varepsilon) \\
{} & \leq  \frac{1}{\varepsilon},
\end{align*}
where the last inequality follows since each $g_n$ is a distribution function. This proves that $g(y)$ is continuous on $(0, \varepsilon)$. This completes the proof of Lemma \ref{lm:khan_extension}.

\end{proof}

\subsubsection*{Multiple atoms}
In the previous section we found the correct order of fluctuations when $f$ had a single fixed point in the interval $(0,1)$. When $f$ has more than one fixed point in the interval $(0,1)$, we cannot simply consider the quantity $W_{2n} - q$, since the limiting distribution has multiple atoms, but it turns out that we can condition on $W_{2n}$ being close enough to one of the atoms and straightforwardly apply Lemma \ref{lm:khan_extension}. Note that the set of fixed points of $f$ cannot have an accumulation point in the interval $(0,1)$. To see this, recall that $f$ is a composition of functions analytic in $(0,1)$. Therefore, $f(x) - x$ is also analytic in $(0,1)$ and we justify the claim using the fact that zeros of an analytic function not identical to $0$ cannot have any accumulation points in the domain in which the function is analytic. 

The case of multiple atoms of $V$ is summarized in the following lemma:
\begin{lemma}
Consider the recursion (\ref{quantilerecursion}) and assume that $q_{-},q,q_{+}$ are fixed points of $f$ satisfying the following conditions:
\begin{itemize}
\item $q \in (0,1)$ is unstable and $f'(q) > 1$,
\item $q_{-} < q < q_{+}$,
\item $q$ is the only unstable from at least one side point of $f$ in the interval $(q_{-}, q_{+})$.
\end{itemize}
Then, 
\begin{align*}
\mathcal{L} \left(\xi^n(W_{2n}-q)  \ | \ W_{2n} \in [q_-, q_+] \right) \xrightarrow[]{} \mathcal{L}(V),
\end{align*}
where $\xi = f'(q)$ and $V$ is a random variable with a continuous distribution function.
\label{lm:multiple_atoms}
\end{lemma}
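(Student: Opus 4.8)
The plan is to reduce this to Lemma~\ref{lm:khan_extension} by conditioning on the event $\{W_{2n}\in[q_-,q_+]\}$ and then rescaling the window $[q_-,q_+]$ affinely back onto $[0,1]$. First I would record that, since $q_-$ and $q_+$ are fixed points of $f$ and $\P(W_{2n}\le y)=f^n(y)$ for $y\in[0,1]$ (by (\ref{quantilerecursion}), started from the uniform distribution), we have $\P(W_{2n}\le q_\pm)=f^n(q_\pm)=q_\pm$; as $W_{2n}$ has a continuous distribution function, $\P(W_{2n}\in[q_-,q_+])=q_+-q_->0$ for every $n$, so the conditioning is legitimate throughout, and the limiting probability (by Theorem~\ref{thm:unscaled}) is the same constant $q_+-q_-$.

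Next I would introduce the affine bijection $\phi\colon[0,1]\to[q_-,q_+]$ given by $\phi(t)=q_-+(q_+-q_-)t$ and set $\tilde f=\phi^{-1}\circ f\circ\phi$. Because $f$ is increasing and fixes $q_-$ and $q_+$, it maps $[q_-,q_+]$ into itself, so $\tilde f$ maps $[0,1]$ into itself, is increasing, is analytic on $(0,1)$, and fixes $0$ and $1$; moreover $\tilde f^n=\phi^{-1}\circ f^n\circ\phi$. The fixed points of $\tilde f$ in $(0,1)$ are precisely the $\phi^{-1}$-images of the fixed points of $f$ in $(q_-,q_+)$, and the hypothesis that $q$ is the only fixed point of $f$ in $(q_-,q_+)$ unstable from at least one side is exactly what is needed to conclude that $\tilde q:=\phi^{-1}(q)$ is the unique fixed point of $\tilde f$ in $(0,1)$ (in the application to Theorem~\ref{prof:prop_main}, $q_-$ and $q_+$ are consecutive fixed points of $f$, so this is immediate). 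Since affine conjugation preserves the multiplier at a fixed point, $\tilde f'(\tilde q)=f'(q)=\xi>1$, and in particular $\tilde q$ is unstable from both sides; hence $\tilde f$ meets the hypotheses of Lemma~\ref{lm:khan_extension}.

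The identity that ties things together is that the conditional law of $\phi^{-1}(W_{2n})$ given $W_{2n}\in[q_-,q_+]$ is exactly the law of the root value $\tilde W_{2n}$ in the recursion (\ref{quantilerecursion}) with $f$ replaced by $\tilde f$: for $t\in[0,1]$,
\[
\P\big(\phi^{-1}(W_{2n})\le t \mid W_{2n}\in[q_-,q_+]\big)=\frac{f^n(\phi(t))-q_-}{q_+-q_-}=\frac{\phi(\tilde f^n(t))-q_-}{q_+-q_-}=\tilde f^n(t).
\]
Applying Lemma~\ref{lm:khan_extension} to $\tilde f$ then gives $\xi^n(\tilde W_{2n}-\tilde q)\xrightarrow{d}\tilde V$ with $F_{\tilde V}$ continuous (and satisfying $F_{\tilde V}(x)=\tilde f(F_{\tilde V}(x/\xi))$). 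Since $\phi^{-1}(W_{2n})-\tilde q=(W_{2n}-q)/(q_+-q_-)$ on the conditioning event, this yields $\mathcal{L}\big(\xi^n(W_{2n}-q)\mid W_{2n}\in[q_-,q_+]\big)\to\mathcal{L}(V)$ with $V=(q_+-q_-)\tilde V$, whose distribution function is continuous, as required.

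I do not expect a serious obstacle beyond bookkeeping: the one step that needs care is the verification that $\tilde f$ satisfies the hypotheses of Lemma~\ref{lm:khan_extension} \emph{verbatim}, in particular that $\tilde q$ is the only fixed point of $\tilde f$ in $(0,1)$ and is unstable from both sides. This is where the assumption on $(q_-,q_+)$ enters, and it is cleanest when $q_-,q_+$ are the fixed points of $f$ immediately flanking $q$ (the case relevant for Theorem~\ref{prof:prop_main}); everything else is a direct translation through the affine conjugacy.
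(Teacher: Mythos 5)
Your proposal is correct and takes essentially the same route as the paper: the paper likewise conditions on $\{W_{2n}\in[q_-,q_+]\}$, conjugates $f$ by the affine map of $[q_-,q_+]$ onto $[0,1]$ to obtain $\tilde f(x)=\frac{f(x(q_+-q_-)+q_-)-q_-}{q_+-q_-}$, notes $\tilde f'(\tilde q)=f'(q)=\xi$ with $\tilde q=\frac{q-q_-}{q_+-q_-}$, applies Lemma \ref{lm:khan_extension} to the conditioned, rescaled variables, and concludes with $V=(q_+-q_-)\tilde V$. Your closing caveat about $\tilde q$ being the unique fixed point of $\tilde f$ in $(0,1)$ is resolved exactly as you suggest (and as the paper implicitly does), by reading $q_-,q_+$ as the fixed points immediately flanking $q$, which is the case arising in Theorem \ref{prof:prop_main}.
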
 
Note first that since $f'(q) > 1$, the definitions of $q_{-}$ and $q_{+}$ coincide with those given in (\ref{qminusdef}).

\begin{proof}[Proof of Lemma \ref{lm:multiple_atoms}]
Fix $x \in [0,1]$. Then 
\begin{align}
\begin{split}
\P\left(\frac{W_{2n}-q_{-}}{q_{+}-q_{-}} \leq x \Bigg|  W_{2n} \in [q_{-},q_{+}]\right) = {} & \frac{\P(q_{-} \leq W_{2n} \leq x(q_{+}-q_{-})+q_{-})}{\P(q_{-} \leq W_{2n} \leq q_{+})} \\
= {} & \frac{f(\P(W_{2n-2} \leq x (q_{+}-q_{-}) + q_{-}) - q_{-}}{q_{+}-q_{-}} \\
= {} & \frac{f\left(\P\left(\frac{W_{2n-2} - q_{-}}{q_{+}-q_{-}} \leq x\right)\right) - q_{-}}{q_{+}-q_{-}} \\ 
= {} & \tilde f\left(\P\left(\frac{W_{2n-2} - q_{-}}{q_{+}-q_{-}} \leq x \Bigg|  W_{2n-2} \in [q_{-},q_{+}]\right)\right),
\end{split}
\label{eq:recurence_for_combination}
\end{align}
where
\begin{align*}
\tilde f (x) = \frac{f(x(q_{+}-q_{-}) + q_{-}) - q_{-}}{q_{+}-q_{-}}.
\end{align*}
Furthermore, $\tilde f(x)$ is a continuous bijective mapping from $[0,1]$ to $[0,1]$ with a single fixed point $\tilde q = \frac{q-q_{-}}{q_{+}-q_{-}}$ in $(0,1)$ and
\begin{align}
\begin{split}
\tilde f'(\tilde q) = {} & f'(q) = \xi,\\
\tilde f ^{(k)} (\tilde q) = {} & f^{(k)}(q)(q_{+}-q_{-})^{k-1}.
\end{split}
\label{eq:assumptions_multiple_atoms}
\end{align}
Consider a sequence of random variables $(\tilde W_{2n})_{n=0}^\infty$ such that
\begin{align*}
\tilde W_{2n} \stackrel{d}{=} \left(\frac{W_{2n}-q_{-}}{q_{+}-q_{-}} \ \Bigg| \ W_{2n} \in [q_{-},q_{+}]\right).
\end{align*}
We check that $\tilde W_0 \sim U(0,1)$, $\P(\tilde W_{2n} \leq x) = \tilde f(\P (\tilde W_{2n-2} \leq x)$. Combining this with (\ref{eq:assumptions_multiple_atoms}), we may apply Lemma \ref{lm:khan_extension} to $(\tilde W_{2n})_{n=0}^\infty$ to conclude that
\begin{align*}
\mathcal{L} \left( \xi^n (W_{2n}-q) \ | \ W_{2n} \in [q_{-},q_{+}] \right) \xrightarrow[]{} \mathcal{L} ((q_{+}-q_{-})V),
\end{align*}
where $W$ is a random variable with a continuous distribution function $\tilde g$ that satisfies $\tilde g(x) = \tilde f(\tilde g(x))$. Finally, we note that the distribution function of $(q_{+}-q_{-})V$ is also continuous, which ends the proof.
\end{proof}

\subsubsection*{Boundary fixed points}
To finish the proof of part (\ref{thm:main_greater_t_1}) of Theorem \ref{prof:prop_main} we need to consider the case when one of $q_-, q_+$ is equal to $q$. This may happen either if $q$ is at the boundary (i.e. $q \in \{0,1\}$) or when $q \in (0,1)$, but $q$ is stable from one side. These cases can be treated simultaneously by repeating the reasoning from the proofs of Lemma \ref{lm:khan_extension} and Lemma \ref{lm:multiple_atoms}. Note that the limiting distribution $V$ is now concentrated on either the positive or negative half-line.

\subsection{Proof of Theorem 
\ref{prof:prop_main}(\ref{thm:main_eq_1}): \texorpdfstring{$f'(q) = 1$}{f'(q) = 1}}
We have already described the fluctuations of $W_{2n}$ when we know that it converges to some fixed point $q$ of $f$ with $f'(q) \in (1, \infty)$. If the point was unstable from both sides, we obtained a two-sided continuous limiting distribution.

If $q$ is a fixed point of $f$ such that $f'(q)=1$, it may be unstable, stable or unstable from one side and stable from the other. In this case it is more convenient to consider each side of $q$ separately. For simplicity, we state and prove a lemma for the case where $q$ is unstable from the right and then comment on the general case.

Note that the set of fixed points doesn't have an accumulation point in $(0,1)$, but it is not known whether this behaviour may be exhibited at the boundary, hence the additional assumption in the lemma.
\begin{lemma}
Consider the recursion (\ref{quantilerecursion}) and assume that $q$ is a fixed point of $f$, that it is unstable from the right and let $q_+ = \inf\{x: x>q, x=f(x)$. Suppose that $f'(q) = 1$ and $k$ is such that $f^{(r)}(q)=0$ for $1<r<k$ and $f^{(k)}(q)\ne 0$. If $q_+ \neq q$, then
\begin{align*}
\mathcal{L} \left(n^{\frac{1}{k-1}}(W_{2n}-q)   \ | \ W_{2n} \in [q, q_+] \right)\stackrel{}{\to} \delta_a,
\end{align*}
where $a = \left( \frac{k(k-2)!}{f^{(k)}(q)} \right)^{\frac{1}{k-1}}$.
\label{lm:f_1_onesided}
\end{lemma}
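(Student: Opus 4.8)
To prove Lemma~\ref{lm:f_1_onesided} the plan is to reduce everything to the deterministic iteration $f^n$ near the parabolic fixed point $q$, and then to locate the precise escape time. Since $W_0\sim U[0,1]$, the recursion (\ref{quantilerecursion}) gives $\P(W_{2n}\le x)=f^n(x)$; each $W_{2n}$ has a continuous law and $q,q_+$ are fixed points of $f$, so $\P(W_{2n}\in[q,q_+])=q_+-q$ for every $n$, and $\P(W_{2n}\in[q,q+s])=f^n(q+s)-q$ for $s\in[0,q_+-q]$. Hence the conditional distribution function of $n^{1/(k-1)}(W_{2n}-q)$ given $W_{2n}\in[q,q_+]$ equals, at $y\ge 0$, exactly $H_n(y)=\bigl(f^n(q+yn^{-1/(k-1)})-q\bigr)/(q_+-q)$, while $H_n(y)=0$ for $y<0$. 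So it is enough to show that for each fixed $y>0$ one has $f^n(q+yn^{-1/(k-1)})\to q$ if $y<a$ and $f^n(q+yn^{-1/(k-1)})\to q_+$ if $y>a$; this gives $H_n(y)\to\ind\{y\ge a\}$ at every $y\ne a$, i.e.\ convergence in distribution to $\delta_a$. Note that $q$ being unstable from the right forces $f^{(k)}(q)>0$, so with $c:=f^{(k)}(q)/k!>0$ we have $f(q+t)=q+t+ct^k+o(t^k)$, $f(x)>x$ on $(q,q_+)$, and $(k-1)c=f^{(k)}(q)/\bigl(k(k-2)!\bigr)=a^{-(k-1)}$.

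The key step is a sharp control of the orbit $t_m:=f^m(q+s)-q$ while it remains in a small right-neighbourhood of $0$. Substituting $v_m:=t_m^{-(k-1)}$ and Taylor-expanding $f$ at $q$, I would establish that for every $\epsilon>0$ there is $\eta>0$ such that, whenever $0<t_m\le\eta$,
\[
-(k-1)(c+\epsilon)\le v_{m+1}-v_m\le -(k-1)(c-\epsilon);
\]
this is the rigorous form of the heuristic $\dot v=-(k-1)c$. Taking $s=s_n:=yn^{-1/(k-1)}$, so that $v_0=y^{-(k-1)}n$, I would then run two first-passage arguments. If $y<a$, then $y^{-(k-1)}>(k-1)c$, so pick $\epsilon$ with $y^{-(k-1)}>(k-1)(c+\epsilon)$: were the orbit to leave $[q,q+\eta]$ at some time $\tau\le n$, the lower increment bound would force $\tau>(y^{-(k-1)}n-\eta^{-(k-1)})/((k-1)(c+\epsilon))>n$ for $n$ large, a contradiction; hence $t_m\le\eta$ for all $m\le n$, so $v_n\ge(y^{-(k-1)}-(k-1)(c+\epsilon))n\to\infty$, i.e.\ $t_n\to 0$ and $f^n(q+s_n)\to q$. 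If $y>a$, pick $\epsilon$ with $(k-1)(c-\epsilon)>y^{-(k-1)}$: the upper increment bound forces the orbit to reach level $\eta$ at some time $m_n\le(y^{-(k-1)}n-\eta^{-(k-1)})/((k-1)(c-\epsilon))\le\rho n$ with $\rho<1$ for $n$ large, so $n-m_n\to\infty$; from time $m_n$ onwards we iterate $f$ on $(q+\eta,q_+)$, where $f(x)>x$ and $f^j(q+\eta)\uparrow q_+$, and since $f^{n-m_n}$ is monotone and $q+\eta\le q+t_{m_n}<q_+$ we get $f^{n-m_n}(q+\eta)\le f^n(q+s_n)\le q_+$, hence $f^n(q+s_n)\to q_+$. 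Combining the two cases finishes the proof.

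The step I expect to be the main obstacle is the quantitative two-sided bound on $v_{m+1}-v_m$: one has to make the $o(t^k)$ remainder in the Taylor expansion of $f$ at $q$ uniform and check that the induced error in $v_{m+1}-v_m$ is genuinely $o(1)$ as $t_m\downarrow 0$, uniformly in $m$ and $n$, and also confirm that the orbit is increasing and stays below $q_+-q$ throughout. Everything else is elementary first-passage bookkeeping. One could instead route the argument through a Fatou/Abel coordinate $\phi$ for the parabolic point, with $\phi\circ f=\phi+1$ and $\phi(q+t)\sim\mp\bigl((k-1)c\bigr)^{-1}t^{-(k-1)}$, but this needs more smoothness than we wish to assume when $q\in\{0,1\}$, so the direct comparison above seems preferable. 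Finally, the mirror-image argument on $(q_-,q)$ handles the case where $q$ is unstable from the left, and the full Theorem~\ref{prof:prop_main}(\ref{thm:main_eq_1}) for a general interval $[q_-,q_+]$ then follows by combining the two sides with an affine rescaling as in Lemma~\ref{lm:multiple_atoms} reducing to $q_-=0$, $q_+=1$.
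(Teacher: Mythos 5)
Your argument is correct, and it reaches the conclusion by a genuinely different mechanism from the paper's. The paper also reduces to studying $f^n(q+yn^{-1/(k-1)})$ (after an affine change of variable sending $[q,q_+]$ to $[0,1]$), but its engine is a monotonicity-in-$n$ statement: it compares $\tilde f\bigl(xn^{-1/(k-1)}\bigr)$ with $x(n-1)^{-1/(k-1)}$ via the order-$k$ Taylor expansion to show that $g_n(x)$ is eventually decreasing in $n$ when $x<a$ and increasing when $x>a$, and then identifies the monotone limits as $0$ and $1$ by a sub-sampling contradiction (writing $g_{nl}(y)=\tilde f^{n}\circ\tilde f^{n(l-1)}(\cdot)$ and using that $\tilde f^n(\varepsilon)\to 1$ for any $\varepsilon>0$). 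You instead pass to the linearizing coordinate $v=t^{-(k-1)}$, prove the uniform two-sided increment bound $v_{m+1}-v_m=-(k-1)c+o(1)$, and read off the answer from a first-passage count; the limit values $q$ and $q_+$ then come out directly rather than by contradiction. The two approaches hinge on the same Taylor expansion and the same threshold identity $(k-1)c=a^{-(k-1)}$, but yours trades the paper's two-stage (monotonicity, then limit identification) structure for explicit orbit tracking, which arguably gives slightly more information (the escape time $\sim (a/y)^{k-1}n$). The step you flag as the main obstacle is in fact routine: writing $f(q+t)-q=t(1+u)$ with $u=ct^{k-1}+o(t^{k-1})$ and expanding $(1+u)^{-(k-1)}$ gives $v_{m+1}-v_m=-(k-1)c+o(1)$ with an error depending only on the value of $t_m$, so the required uniformity in $m$ and $n$ is automatic; and the facts that the orbit is increasing and confined to $(q,q_+)$ follow from $f(x)>x$ on $(q,q_+)$, which instability from the right guarantees. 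Your closing remarks on the mirror case and on assembling Theorem \ref{prof:prop_main}(\ref{thm:main_eq_1}) match what the paper does.
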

Figure \ref{figure:critical}(a) gives
an example where 
Lemma \ref{lm:f_1_onesided} applies.


\begin{proof}[Proof of Lemma \ref{lm:f_1_onesided}]
We are going to show that the distribution function of $n^{\frac{1}{k-1}}\frac{W_{2n}-q}{q_+-q}$ conditioned on the event $W_{2n} \in [q,q_+]$ converges to some limit as $n$ tends to infinity. Define
\begin{align}
g_n(x) := \P\left(n^{\frac{1}{k-1}}\frac{W_{2n}-q}{q_+-q} \leq x\ \bigg| \ W_{2n} \in [q,q_{+}]\right).
\label{eq:gn_f_1}
\end{align}
By calculations similar to those (\ref{eq:recurence_for_combination}) in the proof of Lemma \ref{lm:multiple_atoms}, we obtain that for $x \in [0, 1]$,
\begin{align*}
\P\left(\frac{W_{2n}-q}{q_+-q} \leq x \ \bigg| \ W_{2n} \in [q,q_{+}]\right) = \tilde f\left(\P\left(\frac{W_{2n}-q}{q_+-q} \leq x \ \bigg| \ W_{2n-2} \in [q,q_{+}]\right)\right),
\end{align*}
where
\begin{align*}
\tilde f (x) = \frac{f(x(q_+-q)+q) - q}{q_{+}-q}.
\end{align*}
Note that
\begin{align*}
\tilde f(0) = {} & 0,\\
\tilde f(1) = {} & 1, \\
\tilde f'(0) = {} & f'(q) = 1, \\
\tilde f^{(i)}(0) = {} & (q_+-q)^{i-1} f^{(i)}(q),
\end{align*}
and $\tilde f$ has no fixed points in $(0,1)$. Since for every $x > 0$, for sufficiently large $n$, $\frac{x}{n^{\frac{1}{k-1}}} \in [0,1]$, for such $n$ we have
\begin{align}
\begin{split}
g_n(x) = {} & \P\left(\frac{W_{2n}-q}{q_+-q} \leq  xn^{-\frac{1}{k-1}} \ \bigg| \ W_{2n} \in [q,q_{+}] \right)  \\
= {} & \tilde f^n \left( \P\left(\frac{W_{0}-q}{q_+-q} \leq xn^{-\frac{1}{k-1}}  \ \bigg| \ W_0 \in [q, q_+] \right)\right) \\
= {} & \tilde f^n \left(xn^{-\frac{1}{k-1}} \right).
\end{split}
\label{eq:gn_tilde_fn}
\end{align}
The proof consists of two parts:
\begin{enumerate}[I]
\item We show that for each $x<a$, for sufficiently large $n$, $(g_n(x))$ forms a decreasing sequence, and for each $x>a$, for sufficiently large $n$, $(g_n(x))$ forms an increasing one,
\item we show that for $x < a$, $g_n(x) \rightarrow 0$, and for $x > a$, $g_n(x) \rightarrow 1$.
\end{enumerate}

\paragraph{Part I}
Fix $x \neq 0$. Using Taylor's expansion, we may expand $\tilde f(x)$ as follows:
\begin{align*}
\tilde f(x) = \tilde f(0) + x + \frac{\tilde f^{(k)}(0)}{k!}x^k + r_k(x)x^k,
\end{align*}
where $\lim_{x \to 0} r_k(x) = 0$. Therefore, 
\begin{align}
\tilde f \left(\frac{x}{n^{\frac{1}{k-1}}} \right) < {} & \frac{x}{(n-1)^{\frac{1}{k-1}}}
\label{eq:f_eq_1_monotonicity}
\end{align}
is equivalent to
\begin{align*}
\frac{x}{n^{\frac{1}{k-1}}} + \frac{\tilde f^{(k)}(0)}{k!} \left(\frac{x}{n^{\frac{1}{k-1}}}\right)^k + r_k\left(\frac{x}{n^{\frac{1}{k-1}}}\right)\left(\frac{x}{n^{\frac{1}{k-1}}}\right)^k < {} & \frac{x}{(n-1)^{\frac{1}{k-1}}},
\end{align*}
and to
\begin{align*}
\frac{\tilde f^{(k)}(0)}{k!}x^{k-1} + r_k\left(\frac{x}{n^{\frac{1}{k-1}}}\right) x^{k-1} < {} & n \left(\left(\frac{n}{n-1}\right)^{\frac{1}{k-1}}-1\right).
\end{align*}
Letting $n \to \infty$, the right-hand side of the last formula converges to $\frac{1}{k-1}$, whereas the left-hand one converges to $\frac{\tilde f^{(k)}(0)}{k!}x^{k-1}$. Therefore, the last inequality is satisfied for large $n$ if
\begin{align*}
x < \left( \frac{k(k-2)!}{f^{(k)}(q)} \right)^{\frac{1}{k-1}} \frac{1}{q_+-q},
\end{align*}
and similarly
\begin{align}
\tilde f \left(\frac{x}{n^{\frac{1}{k-1}}} \right) > \frac{x}{(n-1)^{\frac{1}{k-1}}}
\label{eq:f_eq_1_monotonicity2}
\end{align}
for large $n$ if
\begin{align*}
x > \left( \frac{k(k-2)!}{f^{(k)}(q)} \right)^{\frac{1}{k-1}} \frac{1}{q_+-q}.
\end{align*}
This yields the claim, as $\tilde f^{n-1}$ is a strictly increasing function, hence recalling (\ref{eq:gn_tilde_fn}), the inequality (\ref{eq:f_eq_1_monotonicity}) is equivalent to 
\begin{align*}
g_n(x) = \tilde f^n \left(\frac{x}{n^{\frac{1}{k-1}}} \right) < {} & \tilde f^{n-1} \left (\frac{x}{(n-1)^{\frac{1}{k-1}}} \right) = g_{n-1}(x),
\end{align*}
and the inequality (\ref{eq:f_eq_1_monotonicity2}) is equivalent to
\begin{align*}
g_n(x) = \tilde f^n \left( \frac{x}{n^{\frac{1}{k-1}}} \right) > {} & \tilde f^{n-1} \left ( \frac{x}{(n-1)^{\frac{1}{k-1}}} \right) = g_{n-1}(x).
\end{align*}
This ends the proof of the claim.

\paragraph{Part II}

Since each $g_n$ is a distribution function, and by Part I above for each $x$, $(g_n(x))$ is a monotone sequence for large $n$ (decreasing for $x < a$ and increasing for $x > a$), hence the limit $g(x) = \lim_{n \rightarrow \infty} g_n(x)$ exists for all $x$. 
To show that for $x < a$, $g(x) = 0$, assume that for some $0 < x < a$, $g(x) = \varepsilon > 0$. This implies that 
\begin{align}
\tilde f^n \left(\frac{x}{n^{\frac{1}{k-1}}}\right) \geq \varepsilon > 0
\label{eq:gn_not_0}
\end{align}
for large $n$. Take $y \in \R,l\in \mathbb{N}$ such that $y = x \left( \frac{l}{l-1} \right)^{\frac{1}{k-1}} < a$. Note also that $g(y) \leq 1$, but since $(g_n(y))$ is a strictly decreasing sequence, the inequality is in fact sharp, thus
\begin{align}
\lim_{n \to \infty} g_n(y) < 1.
\label{eq:lim_gn_sharp}
\end{align} 
On the other hand,
\begin{align}
\begin{split}
\lim_{n \to \infty} g_n(y) = {} & \lim_{n \to \infty} g_{nl}(y) = \lim_{n \to \infty} \tilde f^{nl}\left(\frac{y}{(nl)^{\frac{1}{k-1}}}\right) \\
= {} & \lim_{n \to \infty} \tilde f^n \circ \tilde f^{n(l-1)} \left(\frac{y\left(\frac{l-1}{l}\right)^{\frac{1}{k-1}}}{(n(l-1))^{\frac{1}{k-1}}}\right) \\
= {} & \lim_{n \to \infty} \tilde f^n \circ \tilde f^{n(l-1)}\left( \frac{x}{(n(l-1))^{\frac{1}{k-1}}} \right), 
\end{split}
\label{eq:lim_gn_manipulations}
\end{align}
and by (\ref{eq:gn_not_0}),
\begin{align*}
\lim_{n \to \infty} \tilde f^n \circ \tilde f^{n(l-1)}\left(\frac{x}{(n(l-1))^{\frac{1}{k-1}}} \right) \geq \lim_{n \to \infty} \tilde f^n(\varepsilon) = 1,
\end{align*}
as $1$ is the only stable fixed point of $\tilde f$ in the interval $[0, 1]$. But this contradicts (\ref{eq:lim_gn_sharp}) and thus $g(x) = 0$.

Similarly, to show that for $x > a$, $g(x) = 1$, fix any such $x$ and take $y \in \R,l\in \mathbb{N}$ such that $y = \left( \frac{l-1}{l} \right)^{\frac{1}{k-1}} x > a$. By calculations similar to (\ref{eq:lim_gn_manipulations}), 
\begin{align*}
\begin{split}
\lim_{n \to \infty} g_n(x) = {} & \lim_{n \to \infty} g_{nl}(x) = \lim_{n \to \infty} \tilde f^{nl}\left(\frac{x}{(nl)^{\frac{1}{k-1}}}\right) \\
= {} & \lim_{n \to \infty} \tilde f^n \circ \tilde f^{n(l-1)} \left(\frac{y}{(n(l-1))^{\frac{1}{k-1}}}\right) \\
= {} & \lim_{n \to \infty} \tilde f^n \circ g_{n(l-1)}(y),
\end{split} 
\end{align*}
but since $(g_n(y))$ is a strictly increasing sequence for large $n$, for these $n$, $g_n(y) \geq \delta$ for some $\delta > 0$, hence
\begin{align*}
\lim_{n \to \infty} \tilde f^n \circ g_{n(l-1)}(y) \geq  \lim_{n \to \infty} \tilde f^n(\delta) = 1,
\end{align*}
as again, $1$ is the only stable fixed point of $\tilde f$ in $[0,1]$.

Recall now the definition of $g_n(x)$ (\ref{eq:gn_f_1}). Parts I and II prove that
\begin{align*}
\mathcal{L} \left( n^{\frac{1}{k-1}}\frac{W_{2n}-q}{q_+-q} \ \bigg| \ W_{2n} \in [q, q_+] \right)\stackrel{}{\to} \delta_a,
\end{align*}
and thus
\begin{align*}
\mathcal{L} \left( n^{\frac{1}{k-1}}(W_{2n}-q) \ \bigg| \ W_{2n} \in [q, q_+] \right)\stackrel{}{\to} \delta_{ a(q_+-q)},
\end{align*}
where
\begin{align*}
a(q_+-q) = \left( \frac{k(k-2)!}{f^{(k)}(q)} \right)^{\frac{1}{k-1}}.
\end{align*}
This completes the proof of Lemma \ref{lm:f_1_onesided}.
\end{proof}

To finish the proof of part (\ref{thm:main_eq_1}) of Theorem \ref{prof:prop_main} we apply Lemma \ref{lm:f_1_onesided} and its counterpart for points unstable from the left to $[q,q_+]$ and $[q_-,q]$ respectively. Checking that for each $n$
\begin{align*}
P(W_{2n} \in [q, q_+] \ | \ W_{2n} \in [q_-,q_+]) = {} & \frac{q_+-q}{q_+-q_-},\\
P(W_{2n} \in [q_-, q] \ | \ W_{2n} \in [q_-,q_+]) = {} & \frac{q-q_-}{q_+-q_-},
\end{align*}
shows that the masses in the formulation of the theorem are chosen appropriately, hence ends the proof. 
\subsection{Proof of Theorem 
\ref{prof:prop_main}(\ref{thm:main_eq_inf}): \texorpdfstring{$f'(q) = \infty$}{f'(q) = infinity}}
Note first that $f'(q)= \infty$ can happen only at 
$q \in \{0, 1\}$. 

We start by describing behaviour of $f$ near $0$. The first step is supplied by the following technical lemma:

\begin{lemma} There exist functions $H(x)$ and $b(x)$ defined on $(-1,1)$ such that $f(x) \sim H(x)$ as $x \to 0$ and 
\begin{align*}
H'(x) \sim \left(\sum_{n=1}^\infty n p_n b(x)^{n-1}\right) K p_K x^{K-1},
\end{align*}
where $1 - b(x) \sim p_K x^K$ as $x \to 0$.
\label{lm:tractable_H}
\end{lemma}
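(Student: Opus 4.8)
The plan is to read the two claimed asymptotics directly off the identity $f=R\circ R$ together with the leading-order expansion of $G$ at $0$; the tail hypothesis (\ref{assumption:mean}) plays no role in the lemma itself and enters only afterwards, when $\sum_{n=1}^\infty n p_n b(x)^{n-1}$ is evaluated. First I would take $b(x):=R(x)=1-G(x)$. Since $K=\min\{k:p_k\neq0\}$, we have $1-b(x)=G(x)=\sum_{k\geq K}p_kx^k\sim p_Kx^K$ as $x\to0$, which is the second assertion. For the derivative, $f=R\circ R$ and $R'=-G'$, so the chain rule gives
\[
f'(x)=R'\bigl(R(x)\bigr)R'(x)=G'\bigl(R(x)\bigr)G'(x)=G'\bigl(b(x)\bigr)\,G'(x),
\]
the two sign changes cancelling, which reflects the two alternating min/max layers of the game. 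Both factors are non-negative; for small $x>0$ we have $0<b(x)<1$, so $G'(b(x))=\sum_{n=1}^\infty n p_n b(x)^{n-1}$ converges, while $G'(x)=Kp_Kx^{K-1}+\sum_{n>K}n p_nx^{n-1}\sim Kp_Kx^{K-1}$ as $x\to0$. Hence $f'(x)\sim\bigl(\sum_{n=1}^\infty n p_n b(x)^{n-1}\bigr)Kp_Kx^{K-1}$.

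I would then simply \emph{define} $H(x):=\int_0^x\bigl(\sum_{n=1}^\infty n p_n b(t)^{n-1}\bigr)Kp_Kt^{K-1}\,\mathrm{d}t$, so that $H(0)=0=f(0)$ and $H'$ equals the required product exactly; note that $H'$ then has the tractable product structure (a function of $b(x)$) times (the pure power $x^{K-1}$), which $f'=G'(b(x))\,G'(x)$ does not quite possess. It remains to check $f(x)\sim H(x)$. On a right-neighbourhood of $0$ both $f'$ and $H'$ are strictly positive and, by the display above, asymptotically equal; moreover $f'$ is integrable near $0$ since $\int_0^x f'=f(x)$ is bounded. So for each $\varepsilon>0$ one has $(1-\varepsilon)f'\leq H'\leq(1+\varepsilon)f'$ on a neighbourhood of $0$, and integrating from $0$ gives $(1-\varepsilon)f(x)\leq H(x)\leq(1+\varepsilon)f(x)$ there, i.e.\ $f(x)\sim H(x)$. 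Extending $b$ and $H$ to negative $x$, so that they live on all of $(-1,1)$, needs only a parity-of-$K$ bookkeeping remark: if $K$ is even the inequalities flip with the sign of $x^{K-1}$, while if $K$ is odd one replaces $b$ near $0$ by the surrogate $1-p_Kx^K$, which keeps the argument of $G'$ below $1$.

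The step I expect to require the most care is precisely this passage from $f'\sim H'$ to $f\sim H$: it is clean only because $f'$ keeps a constant sign and is integrable up to the fixed point, and one must check that the $o(\cdot)$ errors in $G'(x)=Kp_Kx^{K-1}(1+o(1))$ and $G(x)=p_Kx^K(1+o(1))$ are uniform enough to survive integration; everything else is bookkeeping. Finally, although it is not part of this lemma, it is worth recording why $H$ is genuinely tractable: by Karamata's Tauberian theorem, (\ref{assumption:mean}) forces $G'(s)\sim\Gamma(1+\rho)c\,(1-s)^{-\rho}$ as $s\uparrow1$, so $\sum_{n=1}^\infty n p_n b(x)^{n-1}=G'(b(x))\sim\Gamma(1+\rho)c\,(p_Kx^K)^{-\rho}$; hence $H'(x)\sim K\Gamma(1+\rho)c\,p_K^{1-\rho}x^{K(1-\rho)-1}$ and, on integrating, $H(x)\sim\frac{\Gamma(1+\rho)c}{1-\rho}\,p_K^{1-\rho}\,x^{K(1-\rho)}$, which recovers the exponent $K(1-\rho)$ and a constant $C$ as in Theorem \ref{prof:prop_main}(\ref{thm:main_eq_inf}) and shows $K(1-\rho)<1$ exactly when $f'(0)=\infty$ — but that computation belongs to the next step.
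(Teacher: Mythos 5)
Your proof is correct, and it rests on the same two facts as the paper's: with $b:=R=1-G$ one has $1-b(x)=G(x)\sim p_Kx^K$ and $G'(x)\sim Kp_Kx^{K-1}$. The organisation differs slightly. The paper manufactures $H$ from the series identity $f(x)=\frac{1-G(b(x))}{1-b(x)}\,G(x)$, notes that the prefactor $G(x)/(1-b(x))$ tends to $1$, and sets $H(x)=1-\sum_n p_n b(x)^n$; since $b=R$, this $H$ is in fact exactly $f$, so the paper's ``$f\sim H$'' is an identity in disguise and all the content sits in the computation of $H'$. You instead differentiate $f$ directly by the chain rule, $f'(x)=G'(b(x))G'(x)$, take $H$ to be the antiderivative of the target expression (so that the asymptotic for $H'$ is exact), and transfer $f'\sim H'$ to $f\sim H$ by integrating from the common zero at $x=0$ --- legitimate because $f'\ge 0$ is integrable near $0$ with $\int_0^x f'=f(x)$. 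In short, your version makes the derivative statement exact and the function statement the thing to prove, while the paper does the reverse; both yield an $H$ of the tractable product form needed for the Karamata step, and your closing computation reproduces the paper's subsequent asymptotics for $H$. The only loose end is the extension to negative $x$ required by the literal statement ``defined on $(-1,1)$'': for odd $K$ and small $x<0$ one has $b(x)=R(x)>1$, where $G'(b(x))$ need not converge; your surrogate remark addresses this, and in any case only $x\to 0^+$ is used downstream, so this is harmless.
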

\begin{proof}[Proof of Lemma \ref{lm:tractable_H}]
By simple calculations,
\begin{align}
\begin{split}
f(x) = {} & R(R(x))= \frac{R(R(x))}{1-R(x)}(1-R(x)) = \frac{1-\sum_{n=1}^\infty p_n R(x)^n}{1-R(x)} \sum_{n=1}^\infty p_n x^n \\
= {} & \frac{\sum_{n=1}^\infty p_n (1-R(x)^n)}{1-R(x)} \sum_{n=1}^\infty p_n x^{n} = \sum_{n=1}^\infty \left( p_n \sum_{i=0}^{n-1}R(x)^i \right) \sum_{n=1}^\infty p_nx^{n} \\
= {} & \sum_{i=0}^\infty \left (R(x)^i \sum_{n=i+1}^\infty p_n\right) \sum_{n=1}^\infty p_nx^{n} = \sum_{i=0}^\infty \left[ R(x)^i \P (M > i) \right]  \sum_{n=1}^\infty p_nx^{n},
\end{split}
\label{eq:RRx_div_x}
\end{align}
where $M$ is a random variable with law $\P(M=i) = p_i$. Furthermore, recalling that $K= \min\{i : p_i \neq 0\}$,
\begin{align}
\begin{split}
\frac{R(x)}{1-x} = {} & \frac{\sum_{n=1}^\infty p_n(1-x^n)}{1-x} = \sum_{n=1}^\infty p_n \sum_{i=0}^{n-1} x^{i} = \sum_{i=0}^\infty x^i \sum_{n=i+1}^\infty p_n \\
= {} &  \sum_{i=0}^\infty x^i \P(M > i) = 1 + x+\ldots+x^{K-1}+x^K\sum_{i=K}^\infty x^{i-K} \P(M > i).
\end{split}
\label{eq:Rx_div_1-x}
\end{align}
Therefore, substituting (\ref{eq:Rx_div_1-x}) into (\ref{eq:RRx_div_x}),
\begin{align}
f(x) = \sum_{i=0}^\infty \left[(1-x)(1+x+\ldots+x^{K-1}+x^Kh(x))\right]^i \P (M > i)  \sum_{n=1}^\infty p_nx^{n},
\label{eq:RRx_div_x_2}
\end{align}
where 
\begin{align}
h(x)=\sum_{i=K}^\infty x^{i-K} \P(M > i) \to \P(M > K) = 1 - p_K \quad \textrm{ as } x \to 0.
\label{eq:limit_h}
\end{align}
Observe that $h'(x) \to \P(M > K+1)$ as $x \to 0$. Now for any $b < 1$,
\begin{align*}
\sum_{i=0}^\infty b^i \P(M > i) = \sum_{n=1}^\infty p_n \sum_{i=0}^{n-1} b^i = \frac{1}{1-b}\left(1 - \sum_{n=1}^\infty p_n b^n \right),
\end{align*}
and thus, setting 
\begin{align*}
b(x) = (1-x)(1+x+\ldots+x^{K-1}+x^K h(x)) = 1 - x^K + x^K h(x) - x^{K+1} h(x),
\end{align*}
from (\ref{eq:RRx_div_x_2}) we obtain
\begin{align}
f(x) = \frac{1}{1-b(x)}\left(1 - \sum_{n=1}^\infty p_n b(x)^n  \right)\sum_{n=1}^\infty p_nx^{n}.
\label{eq:RRx_div_x_3}
\end{align}
Observe that
\begin{align*}
1-b(x) = x^K(1-h(x)) + x^{K+1} h(x),
\end{align*} 
hence by (\ref{eq:limit_h}),
\begin{align*}
1 - b(x) \sim p_K x^K \quad \textrm{ as } x \to 0.
\end{align*}
Moreover, from (\ref{eq:RRx_div_x_3}),
\begin{align*}
f(x) = {} & \frac{1}{x^K(1-h(x)) + x^{K+1}h(x)}\left(1 - \sum_{n=1}^\infty p_n b(x)^n  \right)\sum_{n=1}^\infty p_nx^{n} = \\
= {} & \frac{1}{1-h(x) + x h(x)}\left(1 - \sum_{n=1}^\infty p_n b(x)^n  \right)\sum_{n=K}^\infty p_nx^{n-K}.
\end{align*}
Note that as $x\to0$, 
the first fraction on the right-hand side 
converges to $\frac{1}{p_K}$, and the final sum
converges to $p_K$. 
Thus
\begin{align*}
f(x)\sim {} & \left(1 - \sum_{n=1}^\infty p_n b(x)^n  \right)
\end{align*}
as $x \to 0$. Therefore, denoting $H(x) := \left(1 - \sum_{n=1}^\infty p_n b(x)^n  \right)$, we have 
\begin{align}
f(x) \sim {} &  H(x)
\label{eq:f_leading_term}
\end{align}
and, recalling that $\lim_{x \to 0}$ and $\lim_{x \to 0} h'(x)=\P(M > K+1)$,
\begin{align*}
H'(x) = {} &  \left(1 - \sum_{n=1}^\infty p_n b(x)^n\right)' \sim \left(\sum_{n=1}^\infty n p_n b(x)^{n-1}\right) K p_K x^{K-1}
\end{align*}
as $x \to 0^+$ which ends the proof of the lemma.
\end{proof}

Equipped with the relation from Lemma \ref{lm:tractable_H} we may now connect $f$ with the underlying offspring distribution via \textit{Karamata's Tauberian Theorem for Power Series} (a proof may be found e.g.\ in \cite{Bingham1987}). Recall first the theorem:
\begin{theorem}[Karamata's Tauberian Theorem] 
If $a_n \geq 0$ and the power series $A(s) = \sum_{n=0}^\infty a_n s^n$ converges for $s \in [0,1)$, then for $c, \rho \geq 0$ 
the following are equivalent:
\begin{align*}
\sum_{k=0}^n a_k \sim c n^\rho \text{ as } n \to \infty
\end{align*}
and
\begin{align*}
A(s) \sim \frac{c\Gamma(1+\rho)}{(1-s)^\rho} \text{ as } s \uparrow 1.
\end{align*}
\label{thm:karamata}
\end{theorem}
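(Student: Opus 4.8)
The plan is to follow Karamata's classical route, reducing both conditions to statements about a Laplace--Stieltjes transform. Encode the partial sums in the non-decreasing step function $U(x) = \sum_{k \le x} a_k$ on $[0,\infty)$, normalised so that $U(0^-) = 0$, so that $U(n) = \sum_{k=0}^n a_k$ and $U(x) = U(\lfloor x\rfloor)$. Writing $s = e^{-t}$ one has
\[
A(e^{-t}) \;=\; \sum_{n=0}^\infty a_n e^{-nt} \;=\; \int_{[0,\infty)} e^{-tx}\, dU(x) \;=:\; \hat U(t),
\]
and $1 - s = 1 - e^{-t} \sim t$ as $t \downarrow 0$. Hence the hypothesis on $A$ is exactly $\hat U(t) \sim c\Gamma(1+\rho)\, t^{-\rho}$ as $t\downarrow 0$, and the conclusion is exactly $U(y) \sim c y^\rho$ as $y\to\infty$ (restricting to $y=n$ then recovers $\sum_{k=0}^n a_k \sim cn^\rho$ with nothing further to check, since $U$ is already defined at integers). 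Non-negativity of the $a_n$ is used precisely in that it makes $U$ non-decreasing, which is the Tauberian side-condition making the converse possible. So it suffices to prove: for non-decreasing $U$ with $U(0^-)=0$, $\hat U(t) \sim c\Gamma(1+\rho)t^{-\rho}$ iff $U(y)\sim cy^\rho$.

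For the Abelian direction ($U(y)\sim cy^\rho \Rightarrow \hat U(t)\sim c\Gamma(1+\rho)t^{-\rho}$) I would use the identity $A(s) = (1-s)\sum_{n\ge 0} U(n)\,s^n$ together with the elementary asymptotic $\sum_{n\ge 0} n^\rho s^n \sim \Gamma(1+\rho)(1-s)^{-1-\rho}$ as $s\uparrow 1$ (obtained by comparison with $\int_0^\infty x^\rho e^{-x(1-s)}\,dx = \Gamma(1+\rho)(1-s)^{-1-\rho}$), splitting the sum at a large index and absorbing the $o(n^\rho)$ error. This half is routine.

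The content of the theorem is the Tauberian direction. Here I would introduce, for each $t>0$, the measure $\mu_t$ on $[0,\infty)$ determined by $\int g\, d\mu_t = t^\rho \int_{[0,\infty)} g(tx)\, dU(x)$. Testing against $g(x) = e^{-\lambda x}$ for $\lambda>0$ gives $\int e^{-\lambda x}\, d\mu_t = t^\rho\, \hat U(\lambda t) \to c\Gamma(1+\rho)\lambda^{-\rho} = \int_{[0,\infty)} e^{-\lambda x}\, dm(x)$, where $m$ is the measure with $m([0,y]) = c y^\rho$ (so $dm = c\rho x^{\rho-1}\,dx$ if $\rho>0$, and $m = c\delta_0$ if $\rho=0$). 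Uniform boundedness of the Laplace transforms $\int e^{-\lambda x}\,d\mu_t$ at a single $\lambda$ yields tightness of $(\mu_t)$ on compact sets, and density of the span of $\{e^{-\lambda x}:\lambda>0\}$ in $C_0[0,\infty)$ (Stone--Weierstrass) then upgrades this to vague convergence $\mu_t \to m$ as $t\downarrow 0$. Finally, since $m$ has no atom at the point $1$, one may test against the indicator $\mathbf{1}_{[0,1]}$ to get $t^\rho U(1/t) = \mu_t([0,1]) \to m([0,1]) = c$, i.e.\ $U(y)\sim cy^\rho$ (set $y=1/t$); the case $\rho=0$ goes through verbatim because $\mathbf{1}_{[0,1]}$ is still an $m$-continuity set. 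I expect the main obstacle to be making this convergence-of-measures step fully rigorous --- deducing vague convergence from pointwise convergence of Laplace transforms via the Stone--Weierstrass argument, and then justifying the passage to the discontinuous test function $\mathbf{1}_{[0,1]}$ --- rather than the bookkeeping in the reduction or the Abelian half.
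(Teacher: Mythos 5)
The paper does not prove this statement at all: it is quoted as a classical result, with the proof delegated to the reference \cite{Bingham1987} (Bingham, Goldie and Teugels, \emph{Regular Variation}), so there is no ``paper proof'' to compare against. Your outline is the standard proof of Karamata's theorem (essentially the one in Feller, Vol.~II, Ch.~XIII.5, and in the cited book): the reduction via $s=e^{-t}$ and $U(x)=\sum_{k\le x}a_k$ is correct, the Abelian half via $A(s)=(1-s)\sum_n U(n)s^n$ and $\sum_n n^\rho s^n\sim\Gamma(1+\rho)(1-s)^{-1-\rho}$ is routine as you say, and the Tauberian half via the rescaled measures $\mu_t$ and the limit measure $m$ with $m([0,y])=cy^\rho$ is the right mechanism, including the observation that $\mathbf{1}_{[0,1]}$ is an $m$-continuity set (also when $\rho=0$, where $m=c\delta_0$).

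The one step you flag as the main obstacle is indeed where your sketch, as written, does not quite close. The measures $\mu_t$ typically have \emph{infinite} total mass (their mass is $t^\rho\sum_n a_n$), so approximating a test function uniformly by a finite linear combination of $e^{-\lambda x}$ in the $C_0$ sup-norm does not control the error: the error term is $\|g-\sum c_i e^{-\lambda_i x}\|_\infty\cdot\mu_t([0,\infty))$, which is useless. The standard repair is to tilt: pass to the finite measures $\nu_t=e^{-x}\,d\mu_t$, whose total masses $t^\rho\hat U(t)$ are uniformly bounded and convergent; your pointwise Laplace-transform convergence becomes convergence of $\int e^{-\sigma x}\,d\nu_t$ for all $\sigma>-1$, and the continuity theorem for Laplace transforms of uniformly bounded measures (or Helly selection plus uniqueness of a finite measure from its transform) gives $\nu_t\to e^{-x}\,dm$ weakly, hence $\mu_t\to m$ vaguely on bounded sets, after which your continuity-set argument for $\mathbf{1}_{[0,1]}$ finishes the proof. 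With that amendment the argument is complete and is the same proof as in the cited literature.
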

Recall the assumption (\ref{assumption:mean}) of Theorem \ref{prof:prop_main}: for some $\rho \in (0,1)$,
\begin{align*}
\mathbb{E}(M \mathbb{I}_{M \leq n}) =  \sum_{k=1}^n k p_k \sim c n^\rho .
\end{align*}

By Theorem \ref{thm:karamata} applied to $a_k=kp_k$ and (\ref{assumption:mean}) we obtain that as we let $x \to 0$ (which implies $b(x) \to 1$),
\begin{align*}
\frac{1}{K p_K x^{K-1}} H'(x) \sim {} & \sum_{n=1}^\infty n p_n b(x)^{n-1} \sim  \frac{c\Gamma(1+\rho)}{(1-b(x))^\rho} \sim \frac{c\Gamma(1+\rho)}{p_K^\rho} \frac{1}{x^{K\rho} }.
\end{align*}
Therefore,
\begin{align}
H(t) = \int_0^t H'(x) \textrm{d} x \sim \int_0^t  \frac{c \Gamma(1+\rho)}{p_K^\rho} \frac{1}{x^{K\rho}} K p_K x^{K-1}\textrm{d} x = \frac{c \Gamma(1+\rho) p_K^{1-\rho}}{1-\rho} t^{K-K\rho},
\label{eq:H_explicit}
\end{align}
as $t \to 0$, hence, by (\ref{eq:f_leading_term}) and (\ref{eq:H_explicit}),
\begin{align*}
f(t) \sim \frac{c \Gamma(1+\rho) p_K^{1-\rho}}{1-\rho} t^{K(1-\rho)}
\end{align*}
as $t \to 0$. This implies that  for $f'(0)=\infty$ to hold it is necessary that $K < \frac{1}{1-\rho}$.

To provide the criterion for $q=1$, we are interested in the behaviour of the quantity $1 - f(t)$ when $t \to 1$. Now
\begin{align}
1-f(t) = 1 - R(R(t)) = G(R(t)) \sim p_K R(t)^K.
\label{eq:1-f}
\end{align}
By definition, $R(x) = 1 - \sum_{k=1}^\infty p_k x^k$, thus
\begin{align*}
R'(x) = - \sum_{k=1}^\infty k p_k x^{k-1},
\end{align*}
and again by Theorem \ref{thm:karamata} applied to $a_k = kp_k$ and (\ref{assumption:mean}),
\begin{align*}
R'(x) \sim - \frac{c \Gamma(1+\rho)}{(1-x)^\rho}
\end{align*}
as $x \to 1$, and thus
\begin{align}
R(t) = R(1) - \int_t^1 R'(x)\textrm{d}x = \frac{c \Gamma(1+\rho)}{1-\rho}\left( 1-t\right)^{1-\rho}.
\label{eq:R(1)}
\end{align}
Substituting (\ref{eq:R(1)}) into (\ref{eq:1-f}) we obtain that
\begin{align*}
1 - f(t) \sim p_K\left(\frac{c \Gamma(1+\rho)}{1-\rho}\right)^K \left( 1-t\right)^{K(1-\rho)}
\end{align*}
as $t \to 1$. Thus again, for $ f'(1) = \infty$ to hold it is necessary that $K(1-\rho) < 1$.

We've shown that $f(t) \sim C_0 t^{K(1-\rho)}$ as $t \to 0$ and $1-f(t) \sim C_1 (1-t)^{K(1-\rho)}$ as $t \to 1$ for some positive constants $C_0, C_1$ that we determined explicitly. Note that
\begin{align*}
C_1 = C_0^k p_K^{1-K(1-\rho)}
\end{align*}
and since $K(1-\rho) < 1$, at least one of the constants $C_0, C_1$ is different from $1$. The proof of Proposition 
\ref{prof:prop_main}(\ref{thm:main_eq_inf}) is completed by the following two lemmas applied as follows: Lemma \ref{prop:nonlinearl_scaling} applied with $\alpha = K(1-\rho)$ proves existence of the distributional limit at either point $q$ with $C_q \neq 1$, and Lemma \ref{prop:limit_iff} shows that the limit exists at $q=0$ if and only if it exists at $q=1$.

\begin{lemma}
Consider the recursion (\ref{quantilerecursion});
\begin{enumerate}
\item Assume that $f(t) \sim C t^{\alpha}$ with $C \neq 1$ and $\alpha \in (0,1)$ as $t \to 0$. Let $q_+ = \inf\{x: x>0, \ x=f(x)\}$. Then
\begin{align}
\mathcal{L} \left( \alpha^n \log W_{2n} \ | \ W_{2n} \in [0, q_+] \right) \xrightarrow[]{d} V_0,
\label{eq:claim_nonlinear_sc1}
\end{align}
where $V_0$ is a random variable with $\P(V_0 \in (- \infty, 0))=1$.
\item Assume that $1-f(t) \sim C (1-t)^{\alpha}$ with $C \neq 1$ and $\alpha \in (0,1)$ as $t \to 1$. Let $q_-=\sup\{x: x<1, \ x=f(x)\}$. Then
\begin{align}
\mathcal{L} \left( \alpha^n \log (1-W_{2n}) \ | \ W_{2n} \in [q_-, 1] \right) \xrightarrow[]{d} V_1,
\label{eq:claim_nonlinear_sc2}
\end{align}
where $V_1$ is a random variable with $\P(V_1 \in (- \infty, 0))=1$.
\end{enumerate}
\label{prop:nonlinearl_scaling}
\end{lemma}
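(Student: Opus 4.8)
The plan is to prove part~(1), from which part~(2) follows by applying the involution $x\mapsto 1-x$: this turns the recursion~(\ref{quantilerecursion}) into the same recursion for $1-W_{2n}$ with $f$ replaced by $u\mapsto 1-f(1-u)$, turns the hypothesis $1-f(t)\sim C(1-t)^\alpha$ as $t\to1$ into the hypothesis of part~(1) for this new function, and sends $q_-$ to $1-q_-$. So assume $f(t)\sim Ct^\alpha$ as $t\to 0$, with $C\neq 1$ and $\alpha\in(0,1)$, and put $q_+=\inf\{x>0:f(x)=x\}$. Since $f(t)/t\sim Ct^{\alpha-1}\to\infty$ we have $f>\mathrm{id}$ near $0$, and as there is no fixed point in $(0,q_+)$ this holds on all of $(0,q_+)$; in particular $q_+$ is a fixed point, $f^n(q_+)=q_+$, and $\P(W_{2n}\in[0,q_+])=f^n(q_+)=q_+$. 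Using $\P(W_{2n}\le x)=f^n(x)$, for $y<0$ and $n$ large enough that $e^{y/\alpha^n}<q_+$ the conditional distribution function of $\alpha^n\log W_{2n}$ is
\[
g_n(y)=\P\!\left(\alpha^n\log W_{2n}\le y\,\middle|\,W_{2n}\in[0,q_+]\right)=\frac{f^n\!\left(e^{y/\alpha^n}\right)}{q_+},
\]
while $g_n(y)=1$ for $y\ge 0$ once $n$ is large. So it suffices to show that, for each $y<0$, $g_n(y)$ converges to a limit $g(y)$, and that $g$, completed by the value $1$ on $[0,\infty)$, is the distribution function of a random variable $V_0$ with $\P(V_0\in(-\infty,0))=1$; that is, $\lim_{y\to-\infty}g(y)=0$ and $g(0-)=1$.

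For the existence of the limit I would use the self-similar recursion $g_n(y)=\bar f\!\left(g_{n-1}(y/\alpha)\right)$, where $\bar f(u):=f(q_+u)/q_+$ is a continuous increasing bijection of $[0,1]$ lying strictly above the identity on $(0,1)$, with $0$ and $1$ as its only fixed points. Since $g_n(y)=f^{n-1}\!\bigl(f(e^{y/\alpha^n})\bigr)/q_+$ and $f^{n-1}$ is increasing, the sign of $g_n(y)-g_{n-1}(y)$ equals that of $f(t)-t^\alpha$ at $t=e^{y/\alpha^n}$, which for large $n$ is the sign of $C-1$; hence $(g_n(y))_n$ is eventually monotone, and being bounded it converges to some $g(y)$. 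Letting $n\to\infty$ in the recursion, continuity of $\bar f$ gives $g(y)=\bar f\!\left(g(y/\alpha)\right)$, and $g$ is non-decreasing; letting $y\to-\infty$ and $y\to 0-$ then shows that $\lim_{y\to-\infty}g(y)$ and $g(0-)$ are fixed points of $\bar f$, hence each equals $0$ or $1$.

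The main point — and the step I expect to require the most care — is to identify the correct ``doubly exponential'' scale and so exclude the degenerate possibilities. From $f(t)\sim Ct^\alpha$ one gets $f^{-1}(s)\sim(s/C)^{1/\alpha}$ on $(0,q_+)$, hence $-\log f^{-1}(s)=\tfrac1\alpha(-\log s)+\tfrac1\alpha\log C+r(s)$ with $r(s)\to0$ as $s\to0$. Fixing $\varepsilon\in(0,q_+)$ and setting $\sigma_n:=-\log(f^{-1})^n(\varepsilon)$, this gives $\sigma_n=\tfrac1\alpha\sigma_{n-1}+\tfrac1\alpha\log C+r_n$ with $|r_n|\le\sup_{0<s\le\varepsilon}|r(s)|=:E(\varepsilon)$; so $\tau_n:=\alpha^n\sigma_n$ satisfies $\tau_n-\tau_{n-1}=\alpha^{n-1}\log C+\alpha^n r_n$, whence $\tau_n$ converges, with $\tau_\infty=-\log\varepsilon+\tfrac{\log C}{1-\alpha}+O(E(\varepsilon))$; in particular $\tau_\infty>0$ for $\varepsilon$ small and $\tau_\infty\to\infty$ as $\varepsilon\to0$. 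The equivalence $f^n(e^{y/\alpha^n})<\varepsilon\iff y<-\alpha^n\sigma_n=-\tau_n$ (valid for $n$ large, by applying the increasing bijection $(f^{-1})^n$ of $[0,q_+]$) then yields the two facts needed: if $y<\inf_n(-\tau_n)$, which is finite since $(\tau_n)$ is bounded, then $g_n(y)\le\varepsilon/q_+$ for all large $n$, so $g(y)\le\varepsilon/q_+$, and letting $\varepsilon\to0$ gives $\lim_{y\to-\infty}g(y)=0$; and if $y\in(-\tau_\infty,0)$, a non-empty interval for small $\varepsilon$, then $-\tau_n<y$ for all large $n$, so $g_n(y)>\varepsilon/q_+$ eventually and hence $g(y)\ge\varepsilon/q_+>0$, forcing $g(0-)=1$. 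With these in hand, $g$ (more precisely, its right-continuous modification) is a distribution function supported on $(-\infty,0)$, and since $g_n\to g$ pointwise on $\R$ we conclude $\mathcal L(\alpha^n\log W_{2n}\mid W_{2n}\in[0,q_+])\tod V_0\sim g$ with $\P(V_0\in(-\infty,0))=1$. The delicate ingredient throughout is controlling the accumulation of the error terms $r_n$ uniformly in $n$ — this is exactly what keeps $\tau_\infty$, and hence the limit $g$, non-degenerate, rather than letting the conditioned law collapse to a point mass or flatten out to a constant.
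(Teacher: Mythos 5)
Your proposal is correct, and its skeleton coincides with the paper's: the reflection $x\mapsto 1-x$ reduces part (2) to part (1); the conditional distribution function is written as $f^n(e^{y/\alpha^n})/q_+$; and pointwise convergence is obtained from eventual monotonicity in $n$, which in both arguments comes from comparing $f(t)$ with $t^\alpha$ near $0$ and using $C\neq 1$. Where you genuinely diverge is in the non-degeneracy step. The paper conjugates to $g(y)=\log f(e^y)$, sandwiches it between the affine maps $h_i(y)=\mathrm{const}+\alpha y$ for $y\le\log q_+$, and then runs two separate double-limit arguments: an explicit iteration of $h_2$ to kill mass at $-\infty$, and a rather involved counting argument (the quantities $k_{x,n}$, $\tilde k_{x,n}$ measuring how many steps the orbit needs to escape a neighbourhood of the fixed point of $h_1$) to show no mass collapses to $0$. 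You instead track the \emph{backward} orbit $(f^{-1})^n(\varepsilon)$ of a fixed threshold, show via a telescoping sum that $\tau_n=\alpha^n\bigl(-\log(f^{-1})^n(\varepsilon)\bigr)$ converges with $\tau_\infty=-\log\varepsilon+\tfrac{\log C}{1-\alpha}+O(E(\varepsilon))$, and read off both tail statements from the single equivalence $f^n(e^{y/\alpha^n})<\varepsilon\iff y<-\tau_n$; you also exploit the functional equation $g(y)=\bar f(g(y/\alpha))$ to force $g(0^-)$ and $\lim_{y\to-\infty}g(y)$ into $\{0,1\}$, which the paper does not use. Both rest on the same linearization $\log f(e^y)=\alpha y+O(1)$, but your level-set bookkeeping replaces the paper's two asymmetric arguments with one symmetric one and is, if anything, tighter (it locates the limit law's quantiles explicitly via $\tau_\infty$); the paper's sandwiching, on the other hand, only needs two-sided $O(1)$ bounds rather than the exact asymptotics of $f^{-1}$, though under the hypothesis $f(t)\sim Ct^\alpha$ these are equivalent.
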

\begin{lemma}
Consider the recursion (\ref{eq:rde}). For $\alpha \in (0,1)$ convergence (\ref{eq:claim_nonlinear_sc1}) holds for some $V_0$ with $\P(V_0 \in (- \infty, 0))=1$ if and only if convergence (\ref{eq:claim_nonlinear_sc2}) holds for some $V_1$ with $\P(V_1 \in (- \infty, 0))=1$.
\label{prop:limit_iff}
\end{lemma}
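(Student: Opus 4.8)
The plan is to reduce both statements (\ref{eq:claim_nonlinear_sc1}) and (\ref{eq:claim_nonlinear_sc2}) to assertions about pointwise limits of the iterates $f^n$ near $0$ and near $1$, and then to pass between the two via the conjugacy $f\circ R=R\circ f$, which is immediate from $f=R\circ R$ and gives $f^n\circ R=R\circ f^n$ for all $n$. First I would record the elementary reformulation of the conditional laws. Since $\P(W_{2n}\le x)=f^n(x)$, and since $q_+$ and $q_-$ are fixed points of $f$ (hence of $f^n$), continuity of $f^n$ gives $\P(W_{2n}\in[0,q_+])=q_+$ and $\P(W_{2n}\in[q_-,1])=1-q_-$ for all $n$, while for $v<0$ and $n$ large enough that $e^{v/\alpha^n}<q_+\wedge(1-q_-)$,
\begin{align*}
\P\!\left(\alpha^n\log W_{2n}\le v\mid W_{2n}\in[0,q_+]\right)&=\frac{f^n(e^{v/\alpha^n})}{q_+},\\
\P\!\left(\alpha^n\log(1-W_{2n})\le v\mid W_{2n}\in[q_-,1]\right)&=\frac{1-f^n(1-e^{v/\alpha^n})}{1-q_-},
\end{align*}
and for $v\ge0$ both conditional distribution functions are eventually $1$. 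Thus (\ref{eq:claim_nonlinear_sc1}) holds with $\P(V_0\in(-\infty,0))=1$ if and only if $g_n(v):=f^n(e^{v/\alpha^n})/q_+$ converges, on a dense subset of $(-\infty,0)$, to a function $\Phi_0$ with $\Phi_0(v)\to0$ as $v\to-\infty$ and $\Phi_0(v)\to1$ as $v\to0^-$; similarly for (\ref{eq:claim_nonlinear_sc2}) with $\Phi_1$.

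For the transfer step, apply $f^n\circ R=R\circ f^n$ with $R(y)=1-\delta$: this gives $f^n(1-\delta)=R(f^n(y))$, hence $1-f^n(1-\delta)=G(f^n(y))$, where $y=G^{-1}(\delta)\to0$ as $\delta\to0$. Taking $\delta=\delta_n=e^{v/\alpha^n}$ and using $G(y)\sim p_Ky^K$ as $y\to0$ (definition of $K$), one gets $\log y_n=\tfrac{v}{K\alpha^n}+O(1)$, so $y_n=e^{w_n/\alpha^n}$ with $w_n:=\alpha^n\log y_n\to v/K$. Now I would invoke the elementary fact that if nondecreasing functions $g_n$ converge to $g$ at every continuity point of $g$, and $w_n\to w$ with $w$ a continuity point of $g$, then $g_n(w_n)\to g(w)$ (sandwich $w_n$ between two nearby continuity points of $g$). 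Since each $g_n$ above is nondecreasing in its argument, assuming (\ref{eq:claim_nonlinear_sc1}) yields $f^n(y_n)=q_+g_n(w_n)\to q_+\Phi_0(v/K)$ for all but countably many $v<0$, and then, by continuity of $G$,
\[
\frac{1-f^n(1-e^{v/\alpha^n})}{1-q_-}=\frac{G(f^n(y_n))}{1-q_-}\;\longrightarrow\;\frac{G\!\left(q_+\Phi_0(v/K)\right)}{1-q_-}=:\Phi_1(v).
\]
Using $q_-=R(q_+)$ (so $G(q_+)=1-q_-$ and $R(q_-)=q_+$) one checks $\Phi_1(v)\to0$ as $v\to-\infty$ and $\Phi_1(v)\to1$ as $v\to0^-$, so $\Phi_1$ is a genuine distribution function and (\ref{eq:claim_nonlinear_sc2}) holds with $V_1$ supported on $(-\infty,0)$.

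The converse implication is entirely analogous: write $f^n(\delta)=R(f^n(z))$ with $z=R^{-1}(\delta)\to1$, use $R(t)\sim\tfrac{c\Gamma(1+\rho)}{1-\rho}(1-t)^{1-\rho}$ as $t\to1$ (established at (\ref{eq:R(1)})) to get $\alpha^n\log(1-z_n)\to v/(1-\rho)$, apply the same monotone-limit fact to the ``$1$-side'' functions, and compose with $R$; the compatibility relations above again make the resulting $\Phi_0$ a proper distribution function. The routine parts are the conditioning identities and checking that the limiting functions are honest distribution functions with no escape of mass. The one place needing care is the monotone-limit step — equivalently, the observation that replacing the argument of $f^n$ by $e^{w_n/\alpha^n}$ with $w_n\to v/K$ (a multiplicative perturbation of the argument by a bounded factor) does not affect the limit. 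This is exactly where monotonicity of $f^n$, together with density of the continuity points of $\Phi_0$, is used, and it is also why no quantitative control on the rate $w_n\to v/K$ is needed.
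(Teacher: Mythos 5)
Your transfer mechanism is essentially the paper's: the identity $f^n\circ R=R\circ f^n$ is exactly the functional content of the paper's Lemma \ref{lm:swap} (there phrased probabilistically as $W_{2n-1}\stackrel{d}{=}G^{-1}(1-W_{2n-2})$ combined with $\P(W_{2n}\le t)=1-G(\P(W_{2n-1}>t))$, which amounts to $f^n=R\circ f^{n-1}\circ R$), and like the paper you then use $G(t)\sim p_K t^K$ to compute the deterministic shift of the argument and a monotonicity/continuity-point argument to absorb the $o(1)$ perturbation and the countable exceptional set. Your forward direction ((\ref{eq:claim_nonlinear_sc1}) implies (\ref{eq:claim_nonlinear_sc2})) is correct and assumption-free, and the bookkeeping with $q_-=R(q_+)$, $G(q_+)=1-q_-$ is right.

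The one genuine issue is your converse. As written, it inverts $R$ near $1$ and invokes the asymptotic $R(t)\sim\frac{c\Gamma(1+\rho)}{1-\rho}(1-t)^{1-\rho}$ from (\ref{eq:R(1)}), which is \emph{not} free: it rests on the tail assumption (\ref{assumption:mean}), whereas the lemma (as the paper stresses immediately after stating it) assumes nothing about $f$ beyond the recursion, and no hypothesis of type (\ref{assumption:mean}) appears in its statement. So your argument proves the converse only under an extra hypothesis (harmless for the application inside Theorem \ref{prof:prop_main}(\ref{thm:main_eq_inf}), but weaker than the lemma). The fix is immediate and is what the paper effectively does: run the \emph{same} identity backwards, using that $G$ is a strictly increasing homeomorphism of $[0,1]$. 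Concretely, for fixed $w<0$ put $y_n=e^{w/\alpha^n}$ and $\delta_n=G(y_n)$, so that $\alpha^n\log\delta_n\to Kw$ by $G(t)\sim p_Kt^K$; then $G\bigl(q_+g_n(w)\bigr)=(1-q_-)h_n(\alpha^n\log\delta_n)$, where $h_n$ is the $1$-side conditional distribution function, and assuming (\ref{eq:claim_nonlinear_sc2}) you get $g_n(w)\to\frac{1}{q_+}G^{-1}\bigl((1-q_-)\Phi_1(Kw)\bigr)$ at all but countably many $w$, with the same boundary checks as before. With that replacement your proof matches the paper's in both strength and substance.
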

Note that in Lemma \ref{prop:limit_iff} we do not assume anything 
about $f$; in particular we do not assume that $C \neq 1$.

\begin{proof}[Proof of Lemma \ref{prop:nonlinearl_scaling}]
Firstly we show how the second part can be obtained from the first one and then we prove the first part of Lemma \ref{prop:nonlinearl_scaling}, which corresponds to $q=0$.

Assume that $1-f(t) \sim C(1-t)^\alpha$ and set $\tilde W_{2n} = 1 - W_{2n}$ and $\tilde f(t) = 1 - f (1 - t)$. Then,
\begin{align*}
\P(\tilde W_{2n} \leq x) = {} & \P(1 - W_{2n} \leq x ) = 1 - \P(W_{2n} \leq 1 -x) =  1 - f(\P(W_{2n-2} \leq 1 -x)) \\
= {} & 1 - f(1 - \P(\tilde W_{2n-2} \leq x)) = \tilde f (\P(\tilde W_{2n-2} \leq x)) = \ldots = f^n (\P(\tilde W_0 \leq x)) \\
= {} & \tilde f^n(x)
\end{align*}
and $\tilde f (t) = 1 - f(1-t) \sim C t^\alpha$ as $t \to 0$. 
Hence it is enough to prove the result for the case $q=0$.

Fix some $x < 0$. Note that for $n$ large enough $\exp \left(\frac{x}{\alpha^n}\right) \leq q_+$, hence for these $n$,
\begin{align}
\begin{split}
\P(\alpha^n \log W_{2n} \leq x \ | \ W_{2n} \in [0, q_+]) = {} & \P \left(W_{2n} \leq \exp\left( \frac{x}{\alpha^n}\right) \ \Big| \ W_{2n} \in [0, q_+]\right) = \\
= {} & \frac{1}{q_+} f^{n} \left( \exp \left(\frac{x}{\alpha^n}\right)\right).
\end{split}
\label{eq:exponential_f}
\end{align}
Define
\begin{align*}
g (y)= \log ( f ( \exp(y))),
\end{align*}
and observe that
\begin{align}
g^n (y)= \log ( f^n ( \exp(y))).
\label{eq:def_g}
\end{align}
The idea behind $g(y)$ is to linearize $f(y)$: note that $g$ is a monotone function, $g(\log q_+) = \log q_+$ and that
\begin{align*}
g(y) = \alpha y + O(1)
\end{align*}
as $y \to -\infty$, hence there exist constants $\tilde D, \tilde E$ such that for $y \leq \log q_+ < 0$,
\begin{align}
\tilde D + \alpha y \leq g(y) \leq \tilde E + \alpha y.
\label{eq:bound_g}
\end{align}

\begin{center}
\begin{figure}[h]
\begin{tikzpicture}
  \draw[->] (-3,0) node (xaxis) {} -- (2,0) node (xaxis2) [below right] {$x$};
  \draw[->] (0,-2.5) node (yaxis) {}  -- (0,2) node (yaxis2) [above right] {$y$};
  
  \draw (-3,-2.1) coordinate (a_1) -- (1.8,0.3) coordinate (a_2) node [above right] {$h_1(x)$};
  \draw (-3,-1.1) coordinate (b_1) -- (1.8,1.3) coordinate (b_2) node [above right] {$h_2(x)$};
  \draw[thin, dashed] (-2,-2) coordinate (c_1) -- (1.5,1.5) coordinate (c_2);
  
  \coordinate (c1) at (intersection of a_1--a_2 and c_1--c_2);
  \coordinate (c2) at (intersection of b_1--b_2 and c_1--c_2);
  \coordinate (logq) at (-0.5,-0.5);
  
  \draw[dashed] (yaxis |- logq) node[right] {$\log q_+$}
      -| (xaxis -| logq) node[below] {};
  \draw[dashed] (yaxis |- c1) node[right] {$\frac{\tilde D}{1-\alpha}$}
      -| (xaxis -| c1) node[below] {}; 
  \draw[dashed] (yaxis |- c2) node[left] {$\frac{\tilde E}{1-\alpha}$}
      -| (xaxis -| c2) node[below] {};

  \fill[black] (logq) circle (2pt);  
  \fill[black] (c1) circle (2pt);
  \fill[black] (c2) circle (2pt);
  
  \draw plot[smooth] coordinates {(-3,-1.5) (-2.5,-1.2) (-1.5, -0.8) (-0.5,-0.5)};
  \draw (-3,-1.5) node[left] {$g(x)$};
 
\end{tikzpicture}
\caption{
\label{fig:linearization}
$h_1(x), g(x), h_2(x)$ together with their (stable) fixed points $\frac{\tilde D}{1-\alpha}, \log q_+, \frac{\tilde E}{1-\alpha}$ respectively. For $x \leq \log q_+$, $h_1(x) \leq g(x) \leq h_2(x)$. The dashed line represents the identity function.}
\end{figure}
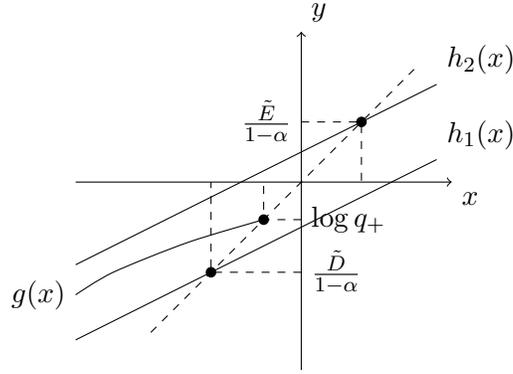
\end{center}

In the first part of the proof we show (assuming that the limit (\ref{eq:claim_nonlinear_sc1}) exists) that $\P(V \in (-\infty,0)) = 1$. Define
\begin{align*}
h_1(y) = {} & \tilde D + \alpha y, \\
h_2(y) = {} & \tilde E + \alpha y.
\end{align*}
Now
\begin{align}
\begin{split}
\lim_{n \to \infty} h_1^{n}\left(\frac{y}{\alpha^n}\right) = {} & y + \frac{\tilde D}{1-\alpha},\\
\lim_{n \to \infty} h_2^{n}\left(\frac{y}{\alpha^n}\right) = {} & y + \frac{\tilde E}{1-\alpha},
\end{split}
\label{eq:conv_h_1}
\end{align}
where $\frac{\tilde D}{1-\alpha}$, $\frac{\tilde E}{1-\alpha}$ are the (unique) fixed points of $h_1$ and $h_2$ respectively. 

Equations (\ref{eq:exponential_f}), (\ref{eq:def_g}), (\ref{eq:bound_g}) and (\ref{eq:conv_h_1}) together imply that 
\begin{align*}
\limsup_{n \to \infty} \P(\alpha^n \log W_{2n} \leq x | W_{2n} \in [0, q_+] ) \leq \frac{1}{q_+} \exp\left( x + \frac{\tilde E}{1-\alpha}\right),
\end{align*}
and therefore
\begin{align*}
\lim_{x \to -\infty} \limsup_{n \to \infty} \P(\alpha^n \log W_{2n} \leq x \ | \ W_{2n} \in [0, q_+]) = 0.
\end{align*}
We shall now show that 
\begin{align}
\lim_{x \to 0^-} \liminf_{n \to \infty} \P(\alpha^n \log W_{2n} \leq x \ | \ W_{2n} \in [0, q_+]) = 1.
\label{eq:nonlinear_upper}
\end{align}
To do so, note first that by (\ref{eq:exponential_f}) and (\ref{eq:def_g})
\begin{align*}
\P(\alpha^n \log W_{2n} \leq x \ | \ W_{2n} \in [0, q_+]) = \frac{1}{q_+} \exp \left( g^n\left(\frac{x}{\alpha^n}\right)\right),
\end{align*}
hence (\ref{eq:nonlinear_upper}) is equivalent to
\begin{align}
\lim_{x \to 0^-} \liminf_{n \to \infty} g^n\left(\frac{x}{\alpha^n}\right) = \log q_+.
\label{eq:contradiction_g}
\end{align}
Note that $\log q_+$ is a fixed point of $g(x)$. (\ref{eq:contradiction_g}) indicates that the scaling $\alpha^n$ is not strong enough to compensate the attraction of the fixed point $\log q_+$ of $g$. 

Let $k_{x,n}$ be the smallest $k$ such that $h_1^k \left(\frac{x}{\alpha^n}\right) \geq \frac{\tilde D}{1-\alpha} - 1$. Note that $k_{x,n}$ is properly defined as $\frac{\bar D}{1-\alpha}$ is the only fixed point of $h_1$ and is stable. Moreover, by (\ref{eq:conv_h_1}) we have that for $x \in (-1,0)$,
\begin{align*}
\lim_{n \to \infty} h_1^n \left(\frac{x}{\alpha^n}\right) = x + \frac{\bar D}{1-\alpha} \geq \frac{\tilde D}{1-\alpha} - 1,
\end{align*}
hence for these $x$, $n - k_{x,n} \geq 0$ for large $n$. Define also
\begin{align*}
K_{x} = \liminf_{n \to \infty} (n - k_{x,n})
\end{align*}
and note that since
\begin{align*}
\lim_{x \to 0^-} \lim_{n \to \infty} h_1^{n}\left(\frac{x}{\alpha^n}\right) = {} &\frac{\tilde D}{1-\alpha},
\end{align*}
and the right-hand side is a fixed point of $h_1$, we obtain that
\begin{align*}
\lim_{x \to 0^-} K_{x} = \infty.
\end{align*}

Now define similarly $\tilde k_{x,n}$ to be the smallest $k$ such that $g^k \left(\frac{x}{\alpha^n}\right) > \frac{\tilde D}{1-\alpha} - 1$. Since $g(y) \geq h_1(y)$ for $y \leq \log q_+$, we have $k_{x,n} \geq \tilde k_{x,n}$, and therefore
\begin{align*}
\lim_{x \to 0^-} \liminf_{n \to \infty} (n - \tilde k_{x,n}) = \infty.
\end{align*}
This implies that
\begin{align*}
\lim_{x \to 0^-} \liminf_{n \to \infty} g^n\left(\frac{x}{\alpha^n}\right) = {} &  \lim_{x \to 0^-} \liminf_{n \to \infty} g^{n-k_{x,n}} \left(g^{k_{x,n}}\left(\frac{x}{\alpha^n}\right)\right) \\
\geq {} & \lim_{x \to 0^-} \liminf_{n \to \infty} g^{n-k_{x,n}}\left(\frac{\tilde D}{1-\alpha}-1\right) \\
= {} & \log q_+.
\end{align*}

To finish the proof it is now enough to justify that the limit $\lim_{n \to \infty} \P(\alpha^n \log W_{2n} \leq x | W_{2n} \in [0, q_+] )$ exists for all $x$; recalling (\ref{eq:exponential_f}) it is enough to check that the sequence $f^{n}\left( \exp \left(\frac{x}{\alpha^n}\right)\right)$ is monotone for large $n$. Since $f$ is a  strictly monotone function, the statements
\begin{align*}
f^{n+1} \left( \exp \left(\frac{x}{\alpha^{n+1}}\right) \right) {} & \geq f^{n} \left( \exp \left(\frac{x}{\alpha^{n}}\right)\right)
\end{align*}
and
\begin{align}
f \left( \exp \left(\frac{x}{\alpha^{n+1}}\right)\right) {} & \geq \exp \left( \frac{x}{\alpha^{n}} \right).
\label{eq:nonlinear_fn_monotone}
\end{align}
are equivalent. We set $y = \frac{x}{\alpha^{n+1}}$ and $z = \exp(y)$ (therefore $y \to -\infty$ corresponds to $z \to 0$) obtaining that (\ref{eq:nonlinear_fn_monotone}) is equivalent to:
\begin{align*}
f(z) {} & \geq z^{\alpha}.
\end{align*}
Therefore, if $f(z) \sim C z^{\alpha}$ for $C \neq 1$ 
we observe that for each $x<\log q_+$ the sequence $f^{n}\left(\exp\left(\frac{x}{(\alpha)^n}\right)\right)$ is monotone for $n$ large enough which yields existence of the limit. 
This ends the proof of Lemma \ref{prop:nonlinearl_scaling}.
\end{proof}

Up to now we only defined $W_m$ for even $m$. 
Before we prove Lemma \ref{prop:limit_iff} we extend
to odd $m$. 
Define the distribution of a random variable $W_{2n-1}$ as follows:
\begin{align*}
W_{2n-1} \stackrel{d}{=} \max_{1 \leq i \leq M} W_{2n-2}^{(i)},
\end{align*}
where $M$ is a random variable from drawn the tree's offspring distribution and $W_{2n-2}^{(i)}$ are independent copies of $W_{2n-2}$
(independent of $M$). The quantity $W_{2n-1}$ corresponds to the value
at the root of a height $2n-1$, with levels alternating
between max and min, starting and ending with a max. 
One has similarly
\begin{align*}
W_{2n} \stackrel{d}{=} \min_{1 \leq i \leq M} W_{2n-1}^{(i)}.
\end{align*}
Lemma \ref{lm:swap} below provides a useful identity which we are going to apply in the proof of Lemma \ref{prop:limit_iff}.

\begin{lemma}
$
W_{2n-1} \stackrel{d}{=} G^{-1}(1-W_{2n-2}).
$
\label{lm:swap}
\end{lemma}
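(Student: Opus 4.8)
The plan is to compute both sides as explicit distribution functions on $[0,1]$ (where all the relevant variables are supported) and to exploit the fact that $\P(W_{2n}\le\,\cdot\,)$ is an iterate of $R$. Write $A_m(x)=\P(W_m\le x)$ for $x\in[0,1]$, and write $R^{\circ k}$ for the $k$-fold composition of $R$ with itself. From (\ref{quantilerecursion}) and $W_0\sim U[0,1]$ one gets $A_{2n}=f^n$ on $[0,1]$, and since $f=R\circ R$ by (\ref{fdef}) this says $A_{2n}=R^{\circ 2n}$ on $[0,1]$; in particular $A_{2n}$ is continuous (as $R$ is), so each $W_{2n}$ has a continuous distribution function — a fact needed below. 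For the odd heights, applying (\ref{onestep}) with $M_i$ and $(W_{2n-2}^{(i,j)})_j$ in the roles they play in the definition of $W_{2n-1}$ gives $\P(W_{2n-1}>x)=R(A_{2n-2}(x))$, hence $A_{2n-1}=1-R\circ A_{2n-2}=1-R^{\circ(2n-1)}$ on $[0,1]$.

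Next I would identify the distribution function of $G^{-1}(1-W_{2n-2})$. Since $G$ maps $[0,1]$ bijectively and increasingly onto $[0,1]$, so does $G^{-1}$; therefore for $x\in[0,1]$ the event $\{G^{-1}(1-W_{2n-2})\le x\}$ equals $\{1-W_{2n-2}\le G(x)\}=\{W_{2n-2}\ge R(x)\}$, using $R=1-G$. Because $W_{2n-2}$ has a continuous distribution function, $\P(W_{2n-2}\ge R(x))=1-A_{2n-2}(R(x))$. Now $A_{2n-2}\circ R=R^{\circ(2n-2)}\circ R=R^{\circ(2n-1)}=R\circ R^{\circ(2n-2)}=R\circ A_{2n-2}$ (iterates of a single map commute), so $\P\bigl(G^{-1}(1-W_{2n-2})\le x\bigr)=1-R\bigl(A_{2n-2}(x)\bigr)=A_{2n-1}(x)$ for every $x\in[0,1]$. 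Since $W_{2n-1}$ and $G^{-1}(1-W_{2n-2})$ are both supported on $[0,1]$ and share the same distribution function there, the lemma follows. (Conceptually this identity is just the min/max-flipping symmetry, $1-W_m$ being distributed as the value at the root of a max-rooted tree of height $m$; but the distribution-function computation is the cleanest route.)

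I do not expect a genuine obstacle: the argument is two short distribution-function computations glued by the observation that $\P(W_{2n}\le\,\cdot\,)=f^n=R^{\circ 2n}$, so that pre- or post-composing with one more copy of $R$ both yield $R^{\circ(2n-1)}$ and hence $\P(W_{2n-1}\le\,\cdot\,)$. The only places calling for a little care are (i) checking that $A_{2n}$ is continuous, so that $\P(W_{2n-2}=R(x))=0$ and the ``$\ge$'' probability really equals $1-A_{2n-2}(R(x))$ rather than involving a left limit; and (ii) not confusing $R$ with $G=1-R$ when translating between ``$>$'' and ``$\le$'' events. The case $n=1$ is not special: there $A_0=\mathrm{id}$ and both sides have distribution function $x\mapsto 1-R(x)$ on $[0,1]$.
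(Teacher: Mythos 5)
Your proof is correct, but it takes a genuinely different route from the paper's. You work entirely with distribution functions: using the uniform boundary condition you identify $\P(W_{2n}\le x)=f^n(x)=R^{\circ 2n}(x)$, derive $\P(W_{2n-1}\le x)=1-R\bigl(R^{\circ(2n-2)}(x)\bigr)$ from the one-step computation (\ref{onestep}), and then compute the distribution function of $G^{-1}(1-W_{2n-2})$ as $1-R^{\circ(2n-2)}(R(x))$; the identity then follows because pre- and post-composing an iterate of $R$ with one more copy of $R$ agree. The paper instead argues structurally on the tree: it absorbs the bottom layer of max nodes together with their uniform leaves into leaf values with distribution function $G$ (since $\max_{1\le i\le M}U_i$ has law $G$), notes that applying the increasing map $G^{-1}$ to all leaves commutes with the minimax recursion, so the height-$(2n-1)$ value equals $G^{-1}$ of a max-rooted height-$(2n-2)$ value with uniform leaves, and finally uses the flip symmetry $W^{\max}_{2n-2}\isd 1-W_{2n-2}$ — the symmetry you mention only parenthetically. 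Your computation is shorter and more elementary, but it leans on the uniform boundary making $\P(W_{2n-2}\le\cdot)$ an iterate of $R$ so that the commutation step is available, and you correctly flag the two delicate points (continuity of the distribution function, so no atom at $R(x)$, and keeping $R$ versus $G$ straight); the paper's decomposition is more conceptual, makes visible why $G^{-1}$ and the $1-W$ flip appear, and would adapt more directly to non-uniform leaf distributions. Both are valid proofs of the distributional identity.
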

\begin{proof}[Proof of Lemma \ref{lm:swap}]

$G$ is the probability generating function of the offspring 
distribution of the tree, so 
$G(t) = \P(\max_{1\leq i \leq M} U_i \leq t)$ where $U_i$ are independent uniform random variables and $M$ follows the offspring distribution (independently of $(U_i, i\geq 1)$). Decomposing the minimax tree of height $2n-1$ with maximum at levels $1$ and $2n-1$, we see that random variables at level $2n-2$ (i.e. one level above the leaves) are distributed as $\max_{1\leq i \leq M} U_i$. Therefore
\begin{align}
W_{2n-1} \stackrel{d}{=} W_{2n-2}^{\text{max}, G},
\label{eq:swap_proof_1}
\end{align}
where $W_{2n-2}^{\text{max}, G}$ is a random variable corresponding to a max-min tree (i.e. with maximum at the even levels and minimum at the odd ones) where at the leaves instead of uniform random variables we put random variables with distribution function $G$. Noting that if $U$ is a uniform random variable then $G^{-1}(U)$ has distribution function $G$, we see that 
\begin{align}
W_{2n-2}^{\text{max}, G} \stackrel{d}{=} G^{-1}(W_{2n-2}^{\text{max}}).
\label{eq:swap_proof_2}
\end{align}
Now, since
\begin{align*}
\max_{1\leq i \leq M}U_i = 1 - \min_{1 \leq i \leq M} (1-U_i) \stackrel{d}{=}1 - \min_{1 \leq i \leq M} U_i
\end{align*}
and
\begin{align*}
\min_{1\leq i \leq M}U_i = 1 - \max_{1 \leq i \leq M} (1-U_i) \stackrel{d}{=}1 - \max_{1 \leq i \leq M} U_i,
\end{align*}
we obtain that 
\begin{align}
W_{2n-2}^{\text{max}} \stackrel{d}{=} 1 - W_{2n-2}.
\label{eq:swap_proof_3}
\end{align}
Finally, combining (\ref{eq:swap_proof_1}), (\ref{eq:swap_proof_2}) and (\ref{eq:swap_proof_3}) completes the proof.
\end{proof}
We are now ready to prove Lemma \ref{prop:limit_iff}.
\begin{proof}[Proof of Lemma \ref{prop:limit_iff}]
The convergence (\ref{eq:claim_nonlinear_sc1}) is equivalent to the convergence of
\begin{align}
\lim_{n \to \infty} \P(\alpha^n \log W_{2n} \leq x \ | \ W_{2n} \in [0, q_+]).
\label{eq:conv_equiv1}
\end{align}
at all the continuity points of the corresponding limiting distribution function and similarly the convergence (\ref{eq:claim_nonlinear_sc2}) is equivalent to the convergence of
\begin{align}
\lim_{n \to \infty} \P(\alpha^n \log (1-W_{2n}) \leq x \ | \ W_{2n} \in [q_-,1]).
\label{eq:conv_equiv1'}
\end{align}
at all the continuity points of the corresponding limiting distribution function. Fix $x < 0$. For large $n$,
\begin{align*}
\P(\alpha^n \log W_{2n} \leq x \ | \ W_{2n} \in [0, q_+]) = {} & \frac{1}{q_+}\P(\alpha^n \log W_{2n} \leq x , \ W_{2n} \in [0, q_+]) \\
= {} & \frac{1}{q_+}\P(W_{2n} \leq \exp (x/\alpha^n) , \ W_{2n} \in [0, q_+]) \\
= {} & \frac{1}{q_+} \P(W_{2n} \leq \exp (x/\alpha^n)).
\end{align*}
Now by the branching structure of the tree,
\begin{align*}
\P(W_{2n} \leq \exp (x/\alpha^n)) = 1 - G(\P(W_{2n-1} > \exp (x/\alpha^n))).
\end{align*}
Since $G$ is a continuous function, the convergence (\ref{eq:conv_equiv1}) is equivalent to the convergence
\begin{align*}
\lim_{n \to \infty} \P(W_{2n-1} > \exp (x/\alpha^n)).
\end{align*}
By Lemma \ref{lm:swap}, 
\begin{align*}
\begin{split}
\P(W_{2n-1} > \exp (x/\alpha^n)) = {} & \P(G^{-1}(1-W_{2n-2}) > \exp (x/\alpha^n)) \\
= {} & \P(1-W_{2n-2} > G(\exp (x/\alpha^n))) \\
= {} & \P(\alpha^{n-2} \log (1-W_{2n-2}) > \alpha^{n-2} \log (G(\exp (x/\alpha^n))))\\
= {} & 1 - \P(\alpha^{n-2} \log (1-W_{2n-2}) \leq \alpha^{n-2} \log (G(\exp (x/\alpha^n)))).
\end{split}
\end{align*}
Since $G(t) \sim p_K t^K$ as $t \to 0$, we observe that
\begin{align*}
\alpha^{n-2} \log (G(\exp (x/\alpha^n))) = \alpha^{n-2} \log (p_K \exp((xK)/\alpha^n) + o(1) = \frac{xK}{\alpha^2} + o(1).
\end{align*}
This implies that if the convergence (\ref{eq:conv_equiv1}) holds at some point $\frac{xK}{\alpha^2}$ which is a continuous point of the limiting distribution function, then the convergence (\ref{eq:conv_equiv1'}) holds at $x$. Similarly, if the convergence (\ref{eq:conv_equiv1'}) holds at some point $x$ which is a continuous point of the limiting distribution function, then the convergence (\ref{eq:conv_equiv1}) holds at $\frac{xK}{\alpha^2}$. Since the set of discontinuity points of any distribution function is at most countable, this ends the proof.
\end{proof}

\section{Proof of the endogeny result}
\label{sec:endogeny_proof}
To prove Theorem \ref{thm:endogeny} we use the 
idea of \textit{bivariate uniqueness} introduced 
by Aldous and Bandyopadhyay \cite{AldBan}.

Informally, the idea is as follows: suppose we allow \textit{two} values at each node. Each coordinate evolves separately,
according to the minimax recursions (and using the same
realisation of the tree structure). If we put 
bivariate values at the leaves of the tree,
we then get a bivariate value at the root of the tree.
Let us consider the moment the case where the values 
are discrete (as for the Bernoulli case in Theorem \ref{thm:endogeny}).
If the process is endogenous, and the tree is large, 
then with high probability the two components at the root
agree with each other. On the other hand, if the 
process is not endogenous, then the probability that 
they disagree stays bounded away from zero as the size of the 
tree goes to infinity, and in fact we can obtain 
a bivariate process on the infinite tree which
is two-periodic and non-degenerate (in the sense that the
two components are not identically the same).

To formalise this we rewrite some of the ideas 
around (\ref{eq:rde}) 
in new notation. 

Let $\mu$ be a distribution on $[0,1]$. 
We defined $T(\mu)$ be the distribution
of the LHS of (\ref{eq:rde}), 
given that the random variables $W_{2n-2}^{(i,j)}$ on the 
RHS of (\ref{eq:rde}) are i.i.d.\ with distribution $\mu$.

So $T$ is a map from $\mathcal{P}$ to $\mathcal{P}$, 
where $\mathcal{P}$ is the space of distributions on $[0,1]$. 
For Theorem \ref{thm:endogeny}
we assume that the 
Bernoulli($1-x$) distribution is a fixed point of $T$. 

Now consider the space $\cPtwo$ of distributions
on $[0,1]^2$. Define the map $\Ttwo$ 
from $\cPtwo$ to itself as follows. 
As before let $M$ and $M_1,M_2,\dots$ be i.i.d.\
draws from the offspring distribution. 
Let $(X^{i,j}_1, X^{i,j}_2)$, for each $i,j$, be
i.i.d.\ with distribution $\mutwo$
(and independent of $M$ and $\{M_i\}$). 
Then let $\Ttwo(\mutwo)$ be the distribution of
$(X_1, X_2)$, where  
\begin{align*}
X_1&=
\min_{1\leq i\leq M}
\max_{1\leq j\leq M_i}
X^{(i,j)}_{1},
\\
X_2&= 
\min_{1\leq i\leq M}
\max_{1\leq j\leq M_i}
X^{(i,j)}_{2}.
\end{align*}
Note particularly that the recursions for $X_1$ and $X_2$
use the \textit{same} realisation of $M$ and $\{M_i\}$. 

If $\mu\in\cP$ then we can define 
a \textit{diagonal} distribution $\mudiag$
on $\cPtwo$ by $\mudiag=\text{dist}(X,X)$
if $\mu=\text{dist}(X)$. 

If $\mu$ is a fixed point of $T$, then certainly $\mudiag$
is a fixed point of $\Ttwo$. The question is whether there
can be any fixed point of $\Ttwo$, whose marginals are equal
to $\mu$, and which is \textit{not}
of the form of the diagonal distribution $\mudiag$. 
Mach, Sturm and Swart \cite[Theorem 1]{MachSturmSwart}, refining
Aldous and Bandyopadhyay \cite[Theorem 11]{AldBan},
show that the recursive tree process is endogenous if and only if
there are no such non-degenerate bivariate fixed points
(i.e.\ if the ``bivariate uniqueness property" holds). 

\begin{proof}[Proof of Theorem \ref{thm:endogeny}]
We apply Theorem 1 of \cite{MachSturmSwart}
(or indeed Theorem 11 of \cite{AldBan}, since the 
additional technical condition relating to continuity 
of the operator $\Ttwo$ does in fact hold in this setting).
To prove our result it is enough to show that
the bivariate uniqueness property holds if and only if
$f'(x)\leq 1$.

Let us write $\mu$ for the Bernoulli($1-x$) distribution
on $\{0,1\}$. 
We look for a distribution 
$\mutwo$ on $\{0,1\}^2$ which is a fixed point of $\Ttwo$, 
and whose marginals
are both $\mu$, but which is not the diagonal distribution $\mudiag$.
Once these marginals are specified, we only need
to specify one further parameter, say 
$b=\mutwo(1,0)$, since then we can deduce
$\mutwo(1,1)=1-x-\mutwo(1,0)=1-x-b$,
and similarly $\mutwo(0,1)=b$ and $\mutwo(0,0)=x-b$.
Note $b\in[0,\min(x,1-x)]$.

To show that $\mutwo$ is a fixed point of $\Ttwo$,
again it suffices to check just one entry of
$\Ttwo(\mutwo)$. To look at this we can consider a 
random tree with two levels, with bivariate
marginals according to $\mutwo$ at level 
2 of the tree; we wish to see distribution $\mutwo$ again 
at the root. Then write also $\nutwo$ for the corresponding 
distribution of the marginals at level 1. 
Let us write $o$ for the root and $\iota$
for a typical level-1 node. 

So consider the probability of seeing values $(1,0)$ at the root. 
For this to happen, all children of the root must have $1$
in the first coordinate, but at least one child of the root must
have $0$ in the second coordinate. That is, 
all children have values $(1,0)$ or $(1,1)$,
but not all of them have values $(1,1)$. 
We obtain 
\begin{align}
\nonumber
\P\big(\text{values} (1,0) \text{ at }o\big)
&=G\big(\nutwo(1,0)+\nutwo(1,1)\big)-G\big(\nutwo(1,1)\big)\\
&=R\big(\nutwo(1,1)\big)-R\big(\nutwo(1,0)+\nutwo(1,1)\big).
\label{P10root}
\end{align}
We examine both the terms on the RHS. 
First note that $\nutwo(1,0)+\nutwo(1,1)$ is
the probability that $\iota$ has value $1$ in the first 
coordinate. This is the probability that at least one child
of $\iota$ has value 1 in the first coordinate,
i.e.\ that not all the children of $\iota$ have value
0 in the first coordinate. Hence

\begin{align}
\nonumber
\nutwo(1,0)+\nutwo(1,1)&=1-G(\mu(0,1)+\mu(0,0))\\
\nonumber
&=1-G(x)\\
&=R(x).
\label{nu1star}
\end{align}
Similarly, for $\iota$ to have values $(1,1)$,
we need to exclude the two events that
all its children have value $0$ in the first coordinate
or that all its children have value $0$ in the 
second coordinate. Both of these events have probability $G(x)$,
while their intersection, i.e.\ that all children have values
$(0,0)$, has probability $G(x-b)$. So applying inclusion-exclusion,
\begin{align}
\nonumber
\nutwo(1,1)&=1-G(x)-G(x)+G(x-b)\\
\label{nu11}
&=2R(x)-R(x-b).
\end{align}

Combining (\ref{P10root}), (\ref{nu1star}) and (\ref{nu11}),
we have that if the probability of values $(1,0)$ at level 2
is $b\in[0,\min(x,1-x)]$, then the probability of values $(1,0)$ at the root is 
$h(b)\in[0,\min(x,1-x)]$, where
\begin{equation}
\label{hdef}
h(b):=R(2R(x)-R(x-b))-R(R(x)).
\end{equation}
For $\mutwo$ to be a fixed point of $\Ttwo$, we therefore
need $b=h(b)$. 
Also $\mutwo$ is diagonal iff $b=0$. 
So non-endogeny is equivalent to the existence of a
fixed point of $h$ in the interval 
 $(0,\min(x,1-x)]$.

From (\ref{hdef})
we have $h(0)=0$, and differentiating with respect to $b$ we get
\begin{align}
h'(b)=R'(R(x)-[R(x-b)-R(x)])R'(x-b)
\label{h1fact}
\end{align}
so that
\begin{align*}
\begin{split}
h'(0)&=R'(R(x))R'(x)\\
&=\frac{d}{dx}R(R(x))\\
&=f'(x).
\end{split}
\end{align*}
Differentiating once more we obtain
\begin{equation}
\label{h2fact}
h''(b)=R''\big(2R(x)-R(x-b)\big)R'(x-b)^2 -R'\big(2R(x)-R(x-b)\big)R''(x-b).
\end{equation}

Since $R$ is positive, decreasing and strictly concave, it follows 
that (\ref{h1fact}) is positive and (\ref{h2fact}) is negative, hence that $h$ is increasing and strictly concave. 

So if $f'(x)\leq 1$,
giving $h'(0)\leq 1$, then $h(u)<u$ for all $u>0$. 
In that case the only non-negative fixed point of $h$
is $0$, and we must obtain $b=0$. In that case
the distribution $\mu^{(2)}$ must be a diagonal 
distribution, and we have bivariate uniqueness
(and hence endogeny).

On the other hand, suppose that $f'(x)>1$, 
so that $h'(0)>1$. Then for sufficiently small $\epsilon>0$,
$h(\epsilon)>\epsilon$. Starting from some such 
$\epsilon$ and iterating $h$ repeatedly
gives an increasing sequence which is bounded above
by $\min(x,1-x)$. Its limit is a fixed point of $h$
which lies in $(0,\min(x,1-x)]$. Hence in 
this case there does exist a non-degenerate bivariate 
fixed point, and the process is non-endogenous, as
required.
\end{proof}

\begin{proof}[Proof of Corollary \ref{cor:endogeny}]
Since $f'(x)=1$ everywhere, Theorem \ref{thm:endogeny} tells us
that all the processes with Bernoulli marginals are endogenous. 
This implies that for any $\mu$, for the process with marginals $\mu$, the event $\{Y\leq y\}$ is measurable with respect to the structure of the tree, for any $y$, where $Y$ is the value at the root. But then in fact the random variable $Y$ is measurable with respect to the structure of the tree, as required.
\end{proof}
 
\section*{Acknowledgments}
We thank Alexander Holroyd and Julien Berestycki for
valuable discussions, and Christina Goldschmidt and
Micha{\l} Przykucki for many helpful comments.
\bibliography{minimax}

\begin{thebibliography}{10}

\bibitem{AldBan}
D.~J. Aldous and A.~Bandyopadhyay.
\newblock A survey of max-type recursive distributional equations.
\newblock {\em Ann. Appl. Probab.}, 15(2):1047--1110, 2005.

\bibitem{ADN}
T.~Ali~Khan, L.~Devroye, and R.~Neininger.
\newblock A limit law for the root value of minimax trees.
\newblock {\em Electron. Comm. Probab.}, 10:273--281, 2005.

\bibitem{Bingham1987}
N.~H. Bingham, C.~M. Goldie, and J.~L. Teugels.
\newblock {\em {Regular variation}}, volume~27.
\newblock Cambridge University Press, 1987.

\bibitem{BroutinDevroyeFraiman}
N.~Broutin, L.~Devroye, and N.~Fraiman.
\newblock Recursive functions on conditional {G}alton--{W}atson trees.
\newblock {\em arXiv preprint arXiv:1805.09425}, 2018.

\bibitem{BroutinMailler}
N.~Broutin and C.~Mailler.
\newblock And/{O}r trees: a local limit point of view.
\newblock {\em Random Structures \& Algorithms}, 2018.
\newblock To appear.

\bibitem{MCTSsurvey}
C.~B. Browne, E.~Powley, D.~Whitehouse, S.~M. Lucas, P.~I. Cowling,
  P.~Rohlfshagen, S.~Tavener, D.~Perez, S.~Samothrakis, and S.~Colton.
\newblock A survey of {M}onte {C}arlo tree search methods.
\newblock {\em IEEE Transactions on Computational Intelligence and AI in
  games}, 4(1):1--43, 2012.

\bibitem{MCTSgo}
S.~Gelly, L.~Kocsis, M.~Schoenauer, M.~Sebag, D.~Silver, C.~Szepesv{\'a}ri, and
  O.~Teytaud.
\newblock The grand challenge of computer {G}o: {M}onte {C}arlo tree search and
  extensions.
\newblock {\em Communications of the ACM}, 55(3):106--113, 2012.

\bibitem{HolroydMartin}
A.~E. Holroyd and J.~B. Martin.
\newblock Galton--{W}atson games.
\newblock 2018.
\newblock In preparation.

\bibitem{MachSturmSwart}
T.~Mach, A.~Sturm, and J.~M. Swart.
\newblock A new characterization of endogeny.
\newblock 2018.
\newblock arXiv preprint arXiv:1801.05253.

\bibitem{Pearl}
J.~Pearl.
\newblock Asymptotic properties of minimax trees and game-searching procedures.
\newblock {\em Artificial Intelligence}, 14(2):113--138, 1980.

\bibitem{Pemantle}
R.~Pemantle and M.~D. Ward.
\newblock Exploring the average values of {B}oolean functions via asymptotics
  and experimentation.
\newblock In {\em Proceedings of the Workshop on Analytic Algorithmics and
  Combinatorics (ANALCO)}, pages 253--262. SIAM, 2006.

\bibitem{alphago}
D.~Silver, A.~Huang, C.~J. Maddison, A.~Guez, L.~Sifre, G.~Van Den~Driessche,
  J.~Schrittwieser, I.~Antonoglou, V.~Panneershelvam, M.~Lanctot, et~al.
\newblock Mastering the game of {G}o with deep neural networks and tree search.
\newblock {\em Nature}, 529(7587):484--489, 2016.

\end{thebibliography}
\bibliographystyle{abbrv}

\bigskip

\noindent
\textsc{James B.\ Martin, Roman Stasi{\'n}ski}
\\
\textsc{Department of Statistics, University of Oxford, UK}
\\
\textit{E-mail addresses:}
\texttt{martin@stats.ox.ac.uk, roman.stasinski@stats.ox.ac.uk}
\\
\textit{URL:}
\texttt{http://www.stats.ox.ac.uk/\textasciitilde{}martin}

\end{document}